    \renewenvironment{proof}[1][\proofname]{%
      \par\pushQED{\qed}\normalfont%
      \topsep6\p@\@plus6\p@\relax
      \trivlist\item[\hskip\labelsep\bfseries#1\@addpunct{.}]%
      \ignorespaces
    }{%
      \popQED\endtrivlist\@endpefalse
    }
\newcommand{\dist}{\operatorname{dist}}
\newcommand{\dv}{\operatorname{div}}
\newcommand{\op}{\operatorname}
\newcommand{\IR}{\mathbb{R}}
\newcommand{\IN}{\mathbb{N}}
\newcommand{\step}[1]{\par\medskip\par\noindent\textit{#1}} 
\newcommand{\dd}{\, \mathrm{d}}
\newcommand{\supp}{\operatorname{supp}}
\newcommand{\eps}{\varepsilon}
\newcommand{\Hdot}{\dot{H}^1}
\newtheorem{theorem}{Theorem}[section]
\newtheorem{lemma}[theorem]{Lemma}
\newtheorem{proposition}[theorem]{Proposition}
\theoremstyle{definition}
\newtheorem{definition}[theorem]{Definition}
\newtheorem{assumption}[theorem]{Assumption}
\newtheorem*{notation}{Notation}
\newtheorem{remark}[theorem]{Remark}
\begin{document}

\begin{center}
{\Large Sedimentation of Inertialess Particles in Stokes Flows}

\bigskip

Richard M. H\"{o}fer\footnote{University Bonn, Endenicher Allee 60, 53115 Bonn, Germany}

\vspace{2mm}

\today

\end{center}

\begin{abstract}
We investigate the sedimentation of a cloud of rigid, spherical particles of identical radii under gravity
in a Stokes fluid. Both inertia and rotation of particles are neglected.
We consider the homogenization limit of many small particles in the case of a dilute system
in which interactions between particles are still important.
In the relevant time scale, we rigorously prove convergence of the dynamics
to the solution of a macroscopic equation.
This macroscopic equation resembles the Stokes equations for a fluid of variable density subject to gravitation.
\end{abstract}

\section{Introduction}

We consider a cloud of  $N$ spherical particles with identical radii $R$ sedimenting in a fluid.
We denote the positions of the centers of the particles by $(\bar{X}_i)_{1\leq i\leq N}$  and their
velocities by $(\bar{V}_i)_{1\leq i\leq N}$.

The fluid surrounding the particles is assumed to satisfy Stokes equations with no-slip boundary conditions at the particles, neglecting particle rotations, i.e.,
\begin{equation}
\label{eq:StokesBdry}
\begin{aligned}
	- \mu \Delta  \bar{v} + \nabla q &= \rho_f g \quad \text{in} ~ \IR^3 \backslash \bigcup_{i=1}^N \overline{B_i}, \\
	\dv \bar{v} &= 0 \quad \text{in} ~ \IR^3 \backslash \bigcup_{i=1}^N \overline{B_i},\\
	\bar{v}(x) &\to 0  \quad \text{as} ~ |x| \to \infty, \\
	\bar{v} &= \bar{V}_i \quad \text{on} ~ \partial {B_i} \quad \text{for all} ~ 1 \leq i \leq N \\
	\bar{V}_i &= \dot{\bar{X}}_i \quad \text{for all} ~ 1 \leq i \leq N.
\end{aligned}
\end{equation}
Here, $\bar{v}$ denotes the fluid velocity, $\rho_f$ its density, and $\mu$ its viscosity, $q$ is the pressure, $g$ the gravitational acceleration, and $B_i := B_{R_i}(\bar{X}_i)$.

Assuming inertialess particles of identical density $\rho_p$ means
\begin{equation}
	\label{eq:Inertialess}
	\frac{4 \pi}{3} R^3 \rho_p  g = - \int_{\partial B_i} \sigma n \dd \mathcal{H}^2 \quad \text{for all} ~ 1 \leq i \leq N,
\end{equation}
where $ \sigma = \mu (\nabla \bar{v} + (\nabla \bar{v})^T) - q I$ is the stress, and $n$ denotes the 
unit outer normal.

Thus, we consider the following problem. Given initial particle positions $\bar{X}_i^0$ and radii $R_i$,
 we have to determine $\bar{X}_i(t)$, $\bar{V}_i(t)$,
$\bar{v}(x,t)$, and $p(x,t)$ such that \eqref{eq:StokesBdry} and \eqref{eq:Inertialess} hold 
for $t > 0$ and 
\begin{equation}
	\label{eq:initialData}
	\bar{X}_i(0) = \bar{X}_i^0  \quad \text{for all} ~ 1 \leq i \leq N.
\end{equation}

To understand the macroscopic behavior of a system with many particles in a viscous fluid is an important and challenging problem in numerous applications,
and is far from being understood [Gua06].
In the static case, it has been shown in \cite{Al90a}, \cite{Al90b}, and \cite{DGR08} that in the homogenization limit, the fluid satisfies
Brinkman equations or Darcy's law (depending on the density and radii of the particles).

In the dynamic case, so called Vlasov-Stokes (or Vlasov-Navier-Stokes)
equations are proposed if the inertia of particles (and the inertia of the fluid) is taken into account
(see \cite{Ham98} and \cite{GJV04b} and the references therein).
While there are results on existence and certain limits of these macroscopic Vlasov type equations, 
a rigorous mathematical derivation of these equations from the microscopic level involving particles
is still lacking.

If the volume fraction of the particles is sufficiently small, inertia of both the fluid and the particles
can be expected to be negligible (see below). 
In this case the identification of the regime that is so dilute that particle interactions are negligible
has been provided in \cite{JO04}.
In the present work, we will rigorously derive a macroscopic limit in the case of inertialess particles
of small volume fraction, but in systems that are not so dilute as in \cite{JO04}.
This macroscopic equations can be viewed as the inertialess limit of the Vlasov type equations mentioned above.

\subsection{Interactions between particles}
\label{sec:interactions}

The particles in the fluid interact with each other by affecting the 
fluid velocity which in turn determines the velocities of the particles.
More precisely, as we will see, given particle positions $\bar{X}_i(t) $ at some fixed time $t$,
the fluid velocity $v(t,\cdot)$ is uniquely determined by satisfying Stokes equations, equation \eqref{eq:Inertialess} and the constraint
\[
	v(t,\cdot) = \mathrm{const} \quad \text{on} ~ \partial {B_i} \quad \text{for all} ~ 1 \leq i \leq N,
\]
which is part of the fourth equation in \eqref{eq:StokesBdry}.
Due to the long range structure of the Stokes equations, interaction between particles in this way
are also long range.

The velocity of each particle can be interpreted as consisting of the sum of two contributions.  The first one is the self-interactive
part which is due to the direct influence of the gravitational force on the particle itself.
It corresponds to the velocity of the particle in the absence of other particles.
The second part is the collective effect due to all the other particles. Indeed, their motion results in a macroscopic fluid velocity which again affects each particle.

As in similar problems with interacting particles, there is 
an intrinsic length scale,
that determines how strong the collective effect due to interactions is compared to the self-interaction.
This concept
was introduced in the physics literature in \cite{MR84}. A precise mathematical discussion of this
length and its relevance in phase transition problems driven by diffusive effects can be found
in \cite{NO01}, \cite{NV06}.
This quantity is the so called screening length $\xi$ which is given by
\[
	\xi = \frac1{\sqrt{N R}}.
\]
It can indeed be viewed as a length scale since $d:= N^{-1/3}$ is the typical distance between particles.
Moreover,  $\xi^2$ is the inverse of the capacity density of the particles.

Heuristically the strength of the interactions is computed as follows.
The Stokes drag force for a single particle with velocity $V_s$ in a fluid at rest without external forces
 is given by the well-known formula
\[
	F_{\operatorname{St}} = 6 \pi \mu R V_s.
\]
Since the particle is inertialess, the sum of this drag force,  gravity, and buoyancy is zero.
(In \eqref{eq:Inertialess}, the buoyancy is hidden in the right hand side since the pressure contains a term $\rho_f g \cdot x$ due to the external force.) Thus,  
\[
	6 \pi \mu R V_s = \frac{4\pi}{3} R^3 (\rho_p - \rho_f) g,
\]
and therefore,
\[
	V_s = \frac{2}{9} R^2 \frac{ (\rho_p - \rho_f) g}{\mu}.
\]
We define  $\phi := N R^3$ which is the order of the volume of the particles.
Moreover, we denote
\[
	e := \frac{ (\rho_p - \rho_f) g}{\mu}.
\]
Then,
\begin{equation}
	\label{eq:SpeedOfSingleSphere}
	V_s = \frac{2}{9} \xi^2 \phi e.
\end{equation}

Now, we turn to the computation of the collective effect of the other particles
on a particle $\bar{X}_i$. To first order, the change of fluid velocity near the particle $\bar{X}_i$ due
to a particle $\bar{X}_j$ is expected to be like the fluid velocity generated by the single sphere
considered above centered at $\bar{X}_j$.
The fluid velocity corresponding to this moving single sphere  decays roughly like 
$\frac{1}{|x-\bar{X}_j|}$.
Therefore, the collective effect of the other particles is expected to be of order
\begin{equation}
	\label{eq:collectiveEffect}
	\frac{4\pi}{3} R^3 \sum_{j \neq i} \frac{1}{|\bar{X}_i - \bar{X}_j|} |e| \sim 
	\frac{R^3}{d^3} |e| \int_{\Omega} \frac{1}{|\bar{X}_i - x|} \dd x  \sim N R^3 |e| =\phi |e|,
\end{equation}
where we assumed that all particles are contained in a domain $\Omega$ with a size that is of order one.

Comparing \eqref{eq:SpeedOfSingleSphere} and \eqref{eq:collectiveEffect} suggests 
that the smaller $\xi$, the more relevant interactions become.
In \cite{JO04}, it has been rigorously proven, that in the limit $\xi \to \infty$ particle interactions are negligible.
In contrast, we want to focus on the case where interactions are relevant ($\xi$ of order one)
or even dominant ($\xi \to 0$). In this case, \eqref{eq:collectiveEffect} suggests
that the velocity of the particles (and of the fluid) is of the order of their total volume $\phi$.
Note that since we assumed that the size of the particle cloud is of order one,
the volume fraction of the particles is of the same order as their total volume.

For small volume fractions of the particles $\phi$, this justifies modeling the fluid by Stokes equations, since the Reynolds number is proportional to the velocity.
Moreover, the typical acceleration of the particles can be expected to be proportional to $\phi^2$,
which means that particle inertia are higher order terms.

\subsection{Formulation of the main result}
\label{sec:mainResult}
We consider a  sequence  of initial particle configurations indexed by $\eps$
and we assume  $N_\eps \to \infty$ and $R_\eps \to 0$ in the limit $\eps \to 0$.
Moreover, we assume $\lim_{\eps \to 0} \xi_\eps = \xi_\ast \in [0,\infty)$.

Furthermore, we consider the mass density of the particles
 \[
 	\bar{\rho}_\eps(t,\cdot) := \rho_p \sum_i \chi_{B_i(t)},
 \]
 where the particle positions depend on $\eps$. 
  The dynamics \eqref{eq:StokesBdry} implies that the
   particles are transported by the velocity field $\bar{v}_\eps$, i.e,
  \[
  	\partial_t \bar{\rho}_\eps + \bar{v}_\eps \cdot \nabla \bar{\rho}_\eps = 0.
  \]
  
The formal computation of the order of the velocities of the particles \eqref{eq:collectiveEffect}
 suggest the following rescaling.
 
\begin{equation}
	\label{eq:rescaledMass}
 	\rho_\eps(t,x) := \frac{1}{\rho_p \phi_\eps} \bar{\rho}_\eps(\frac{t}{\phi_\eps},x),
\end{equation}
and
\[
  	v_\eps(t,x) := \frac{1}{\phi_\eps} \bar{v}_\eps(\frac{t}{\phi_\eps},x).
\]
Then,
  \[
  	\partial_t {\rho}_\eps + {v}_\eps \cdot \nabla {\rho}_\eps = 0.
\]

The main result of this paper is the convergence of the dynamics to the macroscopic equation
	\begin{equation}
\label{eq:transportStokes}
\begin{aligned}
	\partial_{t} \rho + \left(\frac{2}{9} \xi_\ast^2 e  + v_\ast \right) \cdot \nabla \rho &= 0, \\
	\rho(0,\cdot) &= \rho_0, \\
	-\Delta v_\ast + \nabla p &= \rho e, \\
	\dv v_\ast &= 0, \\
	v_\ast(x) &\to 0 \quad \text{as} ~ |x| \to \infty.
\end{aligned}
\end{equation}
The convergence holds in a certain averaged sense for the rescaled
mass density $\rho_\eps$ given that the corresponding  initial mass density  ${\rho}_{\eps,0}$
converges in the same averaged sense (cf. Assumption \ref{ass:ConvergenceInitialData}). 
Moreover, we need to impose that initially, the distances between particles are bounded below 
by a fraction of their typical distance $N_\eps^{-1/3}$, 
and that the system is dilute in the sense $\phi_\eps \log N_\eps \to 0$ 
(cf. assumptions \ref{cond:particlesSeparated} - \ref{cond:screeningLength} in Section 2).

The precise statement of the main result will be given in Theorem \ref{th:main}.

\subsection{Interpretation of the macroscopic equation}

The macroscopic equation \eqref{eq:transportStokes} is a nonlinear transport equation for the 
limit averaged mass density of the particles.
The transport velocity $\frac{2}{9} \xi_\ast^2 e  + v_\ast$ consists of two parts.
The velocity $v_\ast$ is the macroscopic fluid velocity, which is given as the solution to Stokes
equations with a source term proportional to the macroscopic mass density.
The explanation for the source term is that by equation  \eqref{eq:Inertialess} every particle
induces a force inside the particle which is proportional to the mass of the particle.
However, the cloud moves faster than
the macroscopic fluid velocity by an additional $\frac{2}{9} \xi_\ast^2 e$.
This corresponds exactly to the speed of a single particle computed in \eqref{eq:SpeedOfSingleSphere}.

Let us consider the limit $\xi_\ast \to \infty$.
We note that the macroscopic fluid velocity $v_\ast$ in \eqref{eq:transportStokes}  is of order one and thus much smaller than $\frac{2}{9} \xi_\ast^2 g$. Rescaling time according to $t' = t/\xi_\ast^2$
yields in the limit $\xi_\ast \to \infty$
\begin{equation}
\label{eq:LimitForScreeningLengthToInfty}
\begin{aligned}
	\partial_{t'} \rho + \frac{2}{9} e \cdot \nabla \rho &= 0, \\
	\rho (0,\cdot) &= \rho_0. \\
\end{aligned}
\end{equation}
This means that the particles simply fall down with constant speed as single particles,
which is in accordance with the results in \cite{JO04}.

On the other hand, in the case $\xi_\ast = 0$, the self-interactions are negligible as expected.
For positive but finite $\xi_\ast$, the behavior of solutions to \eqref{eq:transportStokes}
is very similar as in the case $\xi_\ast = 0$.
Indeed, although equation \eqref{eq:transportStokes} is nonlinear, 
the only effect of  the term $\frac{2}{9} \xi_\ast^2 e$ 
is a translation velocity of the cloud (cf. Proposition \ref{pro:effectConstant}).
The reason for this is that, as a convolution operator, the solution operator of the Stokes equations
commutes with translations.

An important observation is that equation \eqref{eq:transportStokes} can be interpreted as
modeling the evolution of a fluid with variable density but fixed viscosity.
In this case, $\rho$ is the difference of density of the fluid to the density at infinity.
In particular, \eqref{eq:transportStokes} models the settling of a fluid drop surrounded by 
a fluid of larger density.

The analogy between a suspension of particles and a fluid drop has been observed
in experiments
(see e.g. \cite{PM82}, \cite{KHA84},  \cite{MMNS01}, \cite{MNG07}),
Moreover, the macroscopic equation \eqref{eq:transportStokes},
 has been obtained by formal computations and referred to as a 
 `continuum model for sedimentation' (see e.g. {\cite{Fe84}, \cite{Lu00}, \cite{MMNS01}}).

\subsection{Outline of the proof}

The main difficulty of the analysis of the dynamics \eqref{eq:StokesBdry}, \eqref{eq:Inertialess}, \eqref{eq:initialData} is that the fluid velocity $\bar{v}_\eps$ is only implicitly given. 
It satisfies Stokes 
equations but the source term is not given explicitly but only the total force on each particle
(by \eqref{eq:Inertialess}) and the constraint that the fluid velocity has to be constant
at every particle. Those constants, however, which are a priori unknown, determine the velocity
of the particles, and therefore, are the relevant quantities in order to understand the dynamics.

As an approximation for the rescaled fluid velocity $v_\eps$, we take the velocity $u_\eps$
which corresponds
to a source term that consists of a sum of forces uniformly distributed on the boundary of the particles such that \eqref{eq:Inertialess} holds (in its rescaled version). This approximation $u_\eps$ does not satisfy the constraint of constant velocity at the particles. Smallness of 
$\nabla (v_\eps - u_\eps)$ in $L^2(\IR^3)$ can be obtained by standard methods
in the limit of small volume of the particles,
but since we are interested in the values of $v_\eps$ at the particles, this is not good enough.

Therefore we use a rigorous version of the method of reflections, which gives a series 
representation of $v_\eps$.
The method of reflection is a method to express the solution operator for an elliptic problem with a
boundary consisting of several connected components in terms of a series involving 
the solution operators for the individual components. 
This method is useful, if the solution operators for those individual components are 
well understood as in the case of spheres.

A version of this method has been used in \cite{JO04} to study systems with very large screening lengths
$\xi$.
We will use the formulation of the method of reflection in the framework of orthogonal projection that has been
investigated in \cite{HV16}, where Stokes equations with Dirichlet boundary conditions
are considered. In that case, the series representation has been proven to converge if
the screening length $\xi$  is sufficiently large (i.e., if the capacity density of the particles is sufficiently small).
It turns out that in the case of the mixed boundary conditions given in \eqref{eq:StokesBdry}, \eqref{eq:Inertialess}, the method is actually convergent under milder assumption.
Indeed, the series is proven to converge to $v_\eps$ in $L^\infty (\IR^3)$ under the assumption
that particles are sufficiently separated and $\phi_\eps \log N_\eps$ is small (cf. \ref{cond:phiLogN}). 
(This assumption, which is only slightly stronger than smallness
of the particle volume $\phi_\eps$, seems to be unavoidable when using the method of reflections,
at least without additional assumption on the distribution of particles.
The $L^2$-estimates, however, suggest that smallness of the particle volume  $\phi_\eps$ should be sufficient.) Moreover, the zero order term of the representation, which is exactly $u_\eps$ from 
above, is shown to be close to $v_\eps$ in $L^\infty(\IR^3)$ in the limit $\eps \to 0$.

Replacing $v_\eps$ by $u_\eps$ is the most important step in the proof of the homogenization result.
Indeed, $u_\eps$ is given explicitly in terms of the particle positions and is close to
the solution of the Stokes equations with a source term proportional to the rescaled 
mass density of the particles. This leads to the macroscopic fluid velocity $v_\ast$ given in 
equation \eqref{eq:transportStokes}.

Another issue is whether  aggregation of particles takes place. Not only is this important to investigate
in order to rule out particle collisions, but also since particle aggregation would prevent convergence of the method of reflections. 
We will prove Lipschitz type estimates for the fluid velocity $v_\eps$, which are derived  using  again the method of reflections, to show that particle aggregation cannot take place in short times. 
As long as all the particles remain well separated, the homogenization result 
is then shown by analyzing how the mass density is transported along characteristics.
In order to prove that particle aggregation does not take place for arbitrary finite times,
we use a posteriori estimates on the averaged particle mass density provided by the macroscopic equation.

\subsection{Organization of the paper}

The remaining parts of this paper are organized as follows.

In Section 2, we give the precise assumption on the initial particle configurations,
as well as the definition of the averaged mass density and the relevant space for the convergence.
At the end, we state the main result of this paper.

In Section 3, we will prove that the dynamics \eqref{eq:StokesBdry}, \eqref{eq:Inertialess}, \eqref{eq:initialData} are well-posed until the first collision of particles.
This is done by showing that the particle velocities are uniquely determined by the particle positions
and that the corresponding function is Lipschitz continuous.

In Section 4 we study two approximations for the rescaled fluid velocity $v_\eps$, namely 
$u_\eps$ and $\tilde{u}_\eps$ which correspond to uniform force distributions on the boundary
and the interior of the particles respectively. 
Moreover, we prove $L^2$-estimates for $\nabla (u_\eps - v_eps)$.

In Section 5, we show that the series representation for $v_\eps$ provided by the method of reflections
converges in $L^\infty(\IR^3)$ provided that the particles are sufficiently separated.
Moreover we  prove that the zero order approximation,
which corresponds to the approximation $u_\eps$ studied in Section 4, converges to $v_\eps$ in $L^\infty(\IR^3)$ in the limit $\eps \to 0$.

In Section 6, we prove that the time, for which any distance between two particles is halved,
is bounded above uniformly in $\eps$. To do this, we prove a Lipschitz type estimate for $v_\eps$
where the Lipschitz constant depends on how aggregated the particles already are. 

In Section 7, we prove estimates for the difference of (approximations of) 
the microscopic fluid velocity $v_\eps$ and 
(approximations of) the macroscopic fluid velocity $v_\ast$ from equation \eqref{eq:transportStokes}.

In Section 8.1, we prove the convergence to the macroscopic equation up to
times, for which the particles remain sufficiently separated. In Section 8.2, we extend this convergence
to arbitrary times by proving that the particles actually will remain sufficiently separated.

In Section 9, we prove well-posedness of the macroscopic equation \eqref{eq:transportStokes}.

\section{Assumptions on the initial particle configuration}
\label{sec:assumptionsInitialConfiguration}

We consider a  sequence  of initial particle configurations $\{X^0_{\eps,i}\}_{1 \leq i \leq N_\eps}$ indexed by $\eps$
and we assume  $N_\eps \to \infty$ and $R_\eps \to 0$ in the limit $\eps \to 0$.

\begin{notation}
For the ease of notation, we write $X^0_i$ instead of $X^0_{\eps,i}$ in the remainder of this paper. 
We will also sometimes drop the index $\eps$
	on other quantities, in particular when $\eps$ is fixed.
\end{notation}

We impose the following constraints on the initial particle distributions.

\begin{enumerate}[label=\textbf{(A\arabic*)}, ref=(A\arabic*)]

\item
\label{cond:particlesSeparated}	
We require the distance between every pair of particles to be at least of the order of the typical distance between particles. 
More precisely, the minimal distance
\begin{equation}
	\label{eq:dMin}
	d_{\eps,\min}(0) := \min_{i \neq j} |X_i^0 - X^0_j|
\end{equation}
has to satisfy
\begin{equation}				
		N_\eps (d_{\eps,\min}(0))^3  \geq c_0 
\end{equation}
	for some constant $c_0 > 0$ independent of $\eps$.

\item
\label{cond:phiLogN}
	We define 
\begin{equation}
		 \phi_\eps := N_\eps R_\eps^3,
\end{equation}		
which is the order of volume of the particles. We require
\[
	\phi_\eps \log N_\eps \to 0 \quad \text{as} ~ \eps \to 0.
\]
	
\item 	\label{cond:screeningLength}
The screening length of the system of particles tends to some finite limit. More precisely,
\begin{equation}
		 \xi_\eps := \frac{1}{\sqrt{N_\eps R_\eps}} \to \xi_\ast \quad \text{as} ~ \eps \to 0.
\end{equation}	

%

\end{enumerate}

Note that, for a fixed $\eps >0$, the minimal distance between the particles $d_{\eps,\min} $
might change over time. Thus, an important issue for the analysis of the time evolution will be to examine, whether condition \ref{cond:particlesSeparated} is conserved over time, 
possibly with a smaller constant but uniformly in $\eps$. Therefore we introduce the following quantity.

\begin{definition}
	\label{def:Y}
	For $\eps>0$ we define 
	\[
		Y_\eps (t) := \sup_{ 0 \leq s \leq t} \frac{|X_i^0 - X_j^0|}{|X_i(t) - X_j(t)|},
	\]
	where the particle positions implicitly depend on $\eps$.
\end{definition}

\begin{remark}
\label{rem:particlesNotTouching}
Implicitly, we also assume the particles to be disjoint. Indeed,
for sufficiently small $\eps$ this is ensured by 
\ref{cond:particlesSeparated} and \ref{cond:phiLogN}.

Moreover, this is preserved up to time $t$, provided  $Y_\eps(t)$ satisfies a uniform bound for small
$\eps$:
\[
	d_{\eps,\min}(t) \geq \frac{d_{\eps,\min}(0)}{Y_\eps(t)} \geq  \frac{c_0^{1/3}}{N_\eps^{1/3} Y_\eps(t)} 
	=  \frac{c_0^{1/3}}{\phi_\eps^{1/3} Y_\eps(t)} R_\eps \geq 4 R_\eps,
\]
for all $\eps < \eps_0(t)$ small enough.
We will always assume that $\eps$ is chosen small enough, such that this is the case.
\end{remark}

Notice that in the case of strictly positive limiting screening length $\xi_\ast$, assumption \ref{cond:phiLogN} is automatically satisfied. Indeed,
	\[
		\phi_\eps \log N_\eps = N_\eps R_\eps^3 \log N_\eps \leq R_\eps (N_\eps R_\eps)^2 
		= R_\eps \xi_\eps^{-4} \to 0.
	\]

Finally, we also impose that the initial particle configurations converge in a certain averaged sense.

\begin{definition}
	\label{def:CubeAverages}
	For $\delta > 0$ we decompose $\IR^3$ up to a nullset into open disjoint cubes $Q_\delta^i$ with edge length $\delta$.
	Then, we define  $\rho_\eps^\delta$ by 
	\[
		 \rho_\eps^\delta(x) = \fint_{Q_{\delta}^i} \rho_\eps(y) \dd y \quad \text{for} ~ x \in Q_\delta^i.
	\]
	We will denote the cube containing $x$ by $Q_\delta^x$ (which is unique and exists for a.e. $x$).
\end{definition}

\begin{definition}
\label{def:X}
	Let $\beta \geq 0$. We define the norm 
	\[
		\|h\|_{X_\beta} := \sup_{x} (1+|x|^\beta)|h(x)|,
	\]
	and the space 
	\[
		X_\beta := \{ h \in L^\infty(\IR^3) \colon \|h\|_{X_\beta} < \infty\}.
	\]
\end{definition}

\begin{assumption}
\label{ass:ConvergenceInitialData}
There exists a sequence $ d_{\eps, \min}(0) \ll \delta_\eps \to 0$ and a function $\rho_0 \in X_\beta$
with $\nabla \rho \in X_\beta$ for some $\beta >2$ such that 
\[
\lim_{ \eps \to 0}	\| \rho_{\eps,0}^{\delta_\eps} - \rho_0 \|_{X_\beta} = 0.
\]
\end{assumption}

\begin{remark}
	\label{rem:orderOfDelta}
	For all $\tilde{\delta}_\eps \to 0 $, such that $\tilde{\delta}_\eps = n_\eps \delta_\eps$ for some $n_\eps \in \IN \backslash 0$,
	\[
		\| \rho_{\eps,0}^{\delta_\eps} - \rho_0 \|_{L^\infty} \to 0.
	\]
	Thus, we can assume $\delta_\eps \gg \phi_\eps$.
\end{remark}

The following theorem is the main result of this paper.
\begin{theorem}
\label{th:main}
	Assume that conditions \ref{cond:particlesSeparated} - \ref{cond:screeningLength}
	are satisfied and that the initial data $\rho_{\eps,0}$ converge to some $\rho_0$ in the sense of 
	Assumption \ref{ass:ConvergenceInitialData} with some $\delta_\eps \to 0$ and $\beta >2$.
	Then for all $T >0$, there exists $\eps_0 >0$ such that for all $\eps < \eps_0$, there exists a unique 
	solution to the dynamics \eqref{eq:StokesBdry}, \eqref{eq:Inertialess}, \eqref{eq:initialData}
 without collisions up to time $T/\phi_\eps$.
	Moreover, for all $\tilde{\delta}_\eps \to 0$ such that $\tilde{\delta}_\eps = n_\eps \delta_\eps$
	for some $n_\eps \in \IN^\ast$ with $n_\eps \to \infty$,
	\[
		\rho_\eps^{\tilde{\delta}_\eps} \to \rho  \quad \text{in} ~ L^\infty(0,T;X_\beta),
	\]
	where $\rho$ is the unique classical solution to problem \eqref{eq:transportStokes}.
\end{theorem}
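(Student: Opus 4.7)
The plan is to combine the short-time well-posedness of the microscopic system (Section 3) with the method-of-reflections approximations of $v_\eps$ developed in Sections 4--5, the non-aggregation estimates of Section 6, and the macroscopic comparison estimates of Section 7, closed off by a continuation argument that invokes the global regularity of \eqref{eq:transportStokes} from Section 9.

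First I would fix a threshold $M$ and, for each $\eps$, let $T_\eps^\ast$ denote the supremum of times (in rescaled variables) up to which the microscopic solution exists collision-free and $Y_\eps(\tau) \leq M$ on $[0,T_\eps^\ast]$. By Remark \ref{rem:particlesNotTouching}, the particles then remain pairwise separated by a multiple of $R_\eps$, so all estimates of Sections 4--6 are available; in particular, Section 5 yields $\|v_\eps - u_\eps\|_{L^\infty(\IR^3)} \to 0$, and Section 7 yields $\|u_\eps - v_\ast[\rho_\eps^{\delta_\eps}]\|_{L^\infty} \to 0$, where $v_\ast[\rho]$ denotes the solution of the Stokes system with source $\rho e$. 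In rescaled variables the particle law is $\dot X_i = v_\eps(X_i)$, and the zero-order reflection at $X_i$ splits into the self-part $\tfrac{2}{9}\xi_\eps^2 e$ plus a collective contribution approximated by $v_\ast[\rho_\eps^{\delta_\eps}](X_i)$. A Gr\"onwall comparison of characteristics between the microscopic flow and the macroscopic flow $\tfrac{2}{9}\xi_\ast^2 e + v_\ast[\rho]$, together with Assumption \ref{ass:ConvergenceInitialData}, would then deliver $\rho_\eps^{\tilde\delta_\eps} \to \rho$ in $L^\infty(0,\min(T_\eps^\ast,T); X_\beta)$; this is what I would expect Section 8.1 to establish.

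The second task is to promote this to the full interval $[0,T]$. From Section 9 the macroscopic solution $\rho$ exists globally with $\rho,\nabla\rho \in L^\infty(0,T;X_\beta)$, so $v_\ast[\rho]$ is Lipschitz in $x$ uniformly on $[0,T]$ with a constant depending only on $T$, $\|\rho_0\|_{X_\beta}$ and $\xi_\ast$. Short-time convergence implies that on $[0,T_\eps^\ast]$ the microscopic particles travel close to the characteristics of a field whose Lipschitz norm is controlled a priori in $\eps$; hence $Y_\eps$ cannot grow faster than the exponential of this deterministic constant times $t$, and for $\eps$ sufficiently small it remains strictly below $M/2$ on $[0,T]$. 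A standard continuity argument in $t$ then excludes $T_\eps^\ast < T$, and the convergence extends to all of $[0,T]$.

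The hard part will be closing the loop in this continuation step: the a priori Lipschitz bound on $v_\ast[\rho]$ that rules out aggregation depends on having the convergence $\rho_\eps^{\tilde\delta_\eps} \to \rho$, yet that convergence itself needs the separation $Y_\eps \leq M$. Breaking this circularity—by a bootstrap on $T_\eps^\ast$ that exploits quantitative $L^\infty$ errors from Sections 5 and 7 uniformly on $[0,T]$ against the macroscopic regularity from Section 9—is what turns the short-time homogenization into the global statement of Theorem \ref{th:main}.
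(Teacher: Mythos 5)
Your high-level architecture matches the paper's: Theorem \ref{th:convergenceMacro} (Section 8.1) gives convergence up to any time $T_0$ on which $Y_\eps$ is uniformly bounded in $\eps$, Proposition \ref{pro:particlesRemainSeparated} shows that such a bound persists for a further interval $\theta > 0$ depending only on the current bound, and an a posteriori estimate (Lemma \ref{lem:aPosterioriEstimateY}) plus a continuation argument closes the loop. You have also correctly diagnosed the circularity — one needs the convergence to control separation, and separation to get the convergence. So far this is the right plan.

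However, the specific mechanism you propose for breaking the circularity does not work as stated, and it is not what the paper does. You write that because the microscopic particles travel close to the characteristics of a macroscopic field $v_\ast[\rho]$ whose Lipschitz constant is $\eps$-independent, $Y_\eps$ is controlled by the exponential of that Lipschitz constant. This is false: ``close'' here is an $L^\infty$ proximity of order $\delta_\eps$ (at best), while the inter-particle separations that $Y_\eps$ tracks are of order $d_{\eps,\min} \sim N_\eps^{-1/3} \ll \delta_\eps$. Concretely, if $\Psi_i$ denotes the macroscopic trajectory issued from $X_i(0)$ and $|X_i - \Psi_i| \leq \varepsilon'$, then one only gets $|X_i(t) - X_j(t)| \geq |\Psi_i(t) - \Psi_j(t)| - 2\varepsilon'$, and since $|\Psi_i(t) - \Psi_j(t)| \gtrsim |X_i(0) - X_j(0)| e^{-Lt} \ll \varepsilon'$ for the nearest pairs, this lower bound is vacuous. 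A Lipschitz bound on $v_\ast[\rho]$ does not transfer to a relative separation estimate at the microscopic scale via $L^\infty$ closeness of trajectories.

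What the paper actually does (Lemma \ref{lem:aPosterioriEstimateY}) is estimate the Lipschitz constant of $v_\eps$ itself, not of $v_\ast[\rho]$, via Lemma \ref{lem:OszillationsOfVelocityApproximation}. The Lipschitz constant there is governed by $\alpha = \sup_j \tfrac{1}{N_\eps}\sum_{i\neq j}|X_i - X_j|^{-2}$, and the key gain from the convergence $\rho_\eps^{\tilde\delta_\eps}\to\rho$ is a sharpened bound on $\alpha$: the convergence and the conservation of $\|\rho\|_{L^\infty}$ give an $\eps$-uniform bound on the number of particles per cube of side $\tilde\delta_\eps$, which splits $\alpha$ into a near-field part (bounded by $C_\ast \tilde\delta_\eps^2 Y_\eps^3$, a priori) plus a far-field part (bounded by $C_\ast$, $\eps$-uniformly). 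This yields $\alpha \leq C_\ast(1 + \tilde\delta_\eps Y_\eps^3)$ — the dangerous $Y_\eps^3$ factor now carries the small weight $\tilde\delta_\eps$. Gr\"{o}nwall then gives $Y_\eps(t) \leq \exp\bigl(C_\ast t + C_\ast\tilde\delta_\eps\int_0^t Y_\eps^3\,\dd s\bigr)$, whose blow-up time tends to infinity as $\tilde\delta_\eps \to 0$, producing a bound $Y_\eps(T_0) \leq e^{C_2 T_0}$ with $C_2$ depending only on $c_0$, hence genuinely independent of the constant $C_1$ assumed in Theorem \ref{th:convergenceMacro}. That independence is what lets the contradiction argument in the paper's proof of Theorem \ref{th:main} close: you apply Lemma \ref{lem:aPosterioriEstimateY} up to $T_0 - \theta$, then Proposition \ref{pro:particlesRemainSeparated} to push forward by a $\theta_1$ depending only on $e^{C_2 T_0}$, contradicting minimality of $T_0$. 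Your sketch is missing this improved estimate on $\alpha$, which is the actual content of the bootstrap.
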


\section{Well-posedness of the dynamics away from collisions}

We rewrite the dynamics \eqref{eq:StokesBdry}, \eqref{eq:Inertialess}, \eqref{eq:initialData},
absorbing the gravitational force into the pressure by defining $p(x) = q(x) - \rho_f g \cdot x$.
Moreover, we write the dynamics directly in the rescaled version dividing time and the velocities by $\phi$, as in Section \ref{sec:mainResult}. 
Furthermore, we extend the fluid velocity ${v}$ to a function defined in the whole space, 
by setting $v = V_i$ in $B_i$. Finally, we divide the PDE for $v$ by $\mu$ without renaming the pressure.
Then,

\begin{equation}
\begin{aligned}
	\label{eq:Velocity}
	\dot X_i(t) &= V_i(t),	\\
	X_i(0) &= X_i^0,	
\end{aligned}
\end{equation}
and
\begin{equation}
\label{eq:StokesBdryGivenForces}
\begin{aligned}
	-\Delta  v(t,\cdot) + \nabla q(t,\cdot) &= 0 \quad \text{in} ~ \IR^3 \backslash \bigcup_{i=1}^N \overline{B_i(t)}, \\
	\dv {v}(t,\cdot) &= 0 \quad \text{in} ~ \IR^3 ,\\
	{v}(t,\cdot) &= V_i(t,\cdot) \quad \text{in} ~ \overline{B_i(t)}, \\
	{v}(x) &\to 0  \quad \text{as} ~ |x| \to \infty, 
\end{aligned}
\end{equation}
and
\begin{equation}
	\label{eq:inertialess}
		\int_{\partial B_i} \sigma n \dd \mathcal{H}^2 = -\frac{4 \pi}{3N} \frac{(\rho_p - \rho_f)  g}{\mu} =: -F 
	 \quad \text{for all} ~ 1 \leq i \leq N, \\
\end{equation}
where the fluid stress is now $ \sigma =  \nabla {v} + (\nabla {v})^T - q I$ 
and the force $F$ is the sum of gravity and buoyancy.

\begin{notation}
We denote by $\dot{H}^1(\IR^3;\IR^3)$ the homogeneous Sobolev space, which is 
defined as the closure of $C_c^\infty(\IR^3;\IR^3)$ with respect to the $L^2$-norm of the gradient.
Moreover, we denote by $\Hdot_{\sigma}(\IR^3)$ the space of all divergence free functions
in $\dot{H}^1(\IR^3;\IR^3)$.

\end{notation}

For a fixed time, it is well known that problem \eqref{eq:StokesBdryGivenForces} has a unique weak solution $({v},q)$ in $\dot{H}^1(\IR^3;\IR^3) \times L^2(\IR^3)$
given the data of the particles, $X_i$, $V_i$,  provided the particles are not touching each other.

Moreover, for a fixed time, problem \eqref{eq:StokesBdryGivenForces}, \eqref{eq:inertialess} has a unique weak solution $v \in \Hdot(\IR^3;\IR^3)$
 in terms of 
$X_i(t)$ provided the particles are non-touching.
To see this, we fix the space positions of the particles $X_i$ and observe that, for given velocities $V_i$, the forces $G_i = -\int_{\partial B_i} \sigma n \dd \mathcal{H}^2$ are given by $A V$, where 
$A \in \IR^{N \times N}$ is a linear map.
Furthermore, $A$ is coercive, because
\begin{equation}
	\label{eq:ACoercive}
	V \cdot A V = V \cdot G = - \sum_{i=1}^N \int_{\partial B_i} V_i \cdot \sigma n \dd \mathcal{H}^2 
	=  \int_{\IR^3 \backslash \bigcup_{i=1}^N \overline{B_i}} |\nabla {v}|^2 = \| {v} \|^2_{\Hdot(\IR^3)} \geq C \| V \|^2.
\end{equation}
Hence, $A$ is invertible, which yields $V$ for prescribed $G$ and $X$.

\begin{theorem}
	For any initial configurations of 
	particles $(X_i^0)_{1\leq i\leq N}$
	such that the closed balls $\overline{B_i^0}$ are pairwise disjoint,
	there exists a time $T_\ast > 0$ such that
	the problem \eqref{eq:Velocity}, \eqref{eq:StokesBdryGivenForces}, \eqref{eq:inertialess} up to time $T_\ast$ 
	has a unique solution. Moreover, at time $T_\ast$, there exist particles $i \neq j$ such that
	$\overline{B_i} \cap \overline{B_j} \neq \emptyset$.
\end{theorem}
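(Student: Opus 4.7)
The plan is to recast the coupled problem as an autonomous ODE on the open manifold of admissible configurations
\[
	\mathcal{O} := \bigl\{ X = (X_1,\ldots,X_N) \in (\IR^3)^N : \overline{B_R(X_i)} \cap \overline{B_R(X_j)} = \emptyset \text{ for all } i \neq j \bigr\}
\]
and to apply Cauchy-Lipschitz. The discussion preceding the theorem already shows that for every $X \in \mathcal{O}$ the resistance matrix $A(X)$ is invertible, so there is a well-defined map $V : \mathcal{O} \to \IR^{3N}$ assigning to $X$ the unique solution of $A(X)V = (-F,\ldots,-F)$. The ODE to study is then $\dot X = V(X)$ with $X(0) = X^0$, and $T_\ast$ will be identified with the first exit time of $X(\cdot)$ from $\mathcal{O}$.

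The main step is to prove that $V$ is locally Lipschitz on $\mathcal{O}$. I would do this by domain perturbation. Fix $X^0 \in \mathcal{O}$, pick $\delta > 0$ smaller than a quarter of $\min_{i\neq j}\dist(\overline{B_i^0},\overline{B_j^0})$, and for $X$ in the $\delta$-ball around $X^0$ construct a smooth family of diffeomorphisms $\Phi_X : \IR^3 \to \IR^3$ with $\Phi_X(\overline{B_i^0}) = \overline{B_i(X)}$, $\Phi_X = \mathrm{id}$ outside a bounded set, $X \mapsto \Phi_X$ smooth, and $\Phi_{X^0} = \mathrm{id}$; a concrete choice is $\Phi_X(x) = x + \sum_i \chi_i(x)(X_i - X_i^0)$ for compactly supported cut-offs $\chi_i$ equal to $1$ on a $\delta$-neighborhood of $\overline{B_i^0}$ and with pairwise disjoint supports. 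Pushing the weak formulation of \eqref{eq:StokesBdryGivenForces}--\eqref{eq:inertialess} forward via $v \mapsto v \circ \Phi_X$ recasts the problem as a variational problem on a single fixed Hilbert space, with a bilinear form whose coefficients depend Lipschitz-continuously on $X$. Coercivity is preserved uniformly in the $\delta$-neighborhood of $X^0$ thanks to \eqref{eq:ACoercive} together with the uniform control of $D\Phi_X$ and $\det D\Phi_X$, so Lax-Milgram yields Lipschitz dependence of the solution on $X$; from this the matrix $A(X)$, and hence $V(X) = -F\, A(X)^{-1}\mathbf{1}$, are Lipschitz near $X^0$.

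With $V$ locally Lipschitz, Picard-Lindel\"of produces a unique maximal $C^1$ solution $X : [0,T_\ast) \to \mathcal{O}$, and the standard continuation principle gives the dichotomy that either $T_\ast = \infty$ or $X(t)$ leaves every compact subset of $\mathcal{O}$ as $t \uparrow T_\ast$. To rule out escape to infinity I would exploit translation invariance of the Stokes operator: $A(X)$ depends only on $R$ and on the differences $X_i - X_j$, so its coercivity constant and the resulting bound on $|V(X)|$ depend only on $d_{\min}(X)$, $R$, $F$ and $N$. Hence velocities stay bounded on every sub-interval on which $d_{\min}$ stays bounded away from zero, so positions remain in a bounded set on $[0,T_\ast)$ if $T_\ast < \infty$. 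Leaving every compact subset of $\mathcal{O}$ in finite time is therefore only possible through $d_{\min}(X(t)) \to 0$, which by finiteness of the index set yields the asserted collision at $T_\ast$.

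The hard part will be the local Lipschitz regularity of $V$: the domain-perturbation scheme is conceptually routine but requires careful bookkeeping, since the constraint $v = V_i$ on $B_i$ is tied to the moving domain (note that $\Phi_X$ need not send spheres to spheres, so the transformed constraint must be read on $B_i^0$ by composition with $\Phi_X^{-1}$), and one has to check that the coercivity constant of the pulled-back form is uniform in the $\delta$-neighborhood of $X^0$. A cleaner alternative, closer to the reflection methods used later in the paper, would be to represent $v$ through single-layer potentials supported on the spheres and read off the required regularity directly from the smoothness of the resulting integral operators in the particle positions.
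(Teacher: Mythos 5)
Your proposal follows essentially the same strategy as the paper: reduce the coupled problem to an autonomous ODE $\dot X = V(X)$ on the open set of admissible configurations, prove local Lipschitz continuity of $V$ by a domain-perturbation argument with a diffeomorphism that translates each ball, and invoke Picard--Lindel\"of plus the continuation dichotomy (with a uniform-in-$X$ velocity bound ruling out escape to infinity). The paper's proof does exactly this, including the observation that invertibility of the resistance matrix $A(X)$ follows from the coercivity \eqref{eq:ACoercive}.

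However, the key technical step is handled differently, and your description glosses over the one point that actually requires work. You propose to pull the full variational problem back to a fixed Hilbert space and get Lipschitz dependence from Lax--Milgram. The difficulty is that the relevant space is $\Hdot_\sigma(\IR^3)$, and the pullback $v \mapsto v \circ \Phi_X$ does \emph{not} preserve divergence-freeness: $\dv(v\circ\Phi_X) = \tr(\nabla v(\Phi_X)\, D\Phi_X)$ is nonzero in the transition annuli. So after pullback you are not on a fixed space of divergence-free fields, and you would need either a Piola-type transform, or a saddle-point (pressure) formulation, or an explicit divergence correction. The paper takes the last route: it pulls back only one solution $\tilde v$ to produce $u_1 = \tilde v \circ \varphi$, corrects its divergence using Lemma~\ref{lem:divergenceSolution} (a Bogovskii-type result in the annuli) to get $u = u_1 - u_2 \in W$ satisfying the force balance \eqref{eq:inertialess}, then compares $u$ with $v$ by testing the difference equation with $w = u - v$, using the a priori bound \eqref{eq:brutalVelocityEstimate} on $\|\tilde v\|_{\Hdot}$. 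This avoids ever setting up a transformed bilinear form. Your flagged worry that $\Phi_X$ might not send spheres to spheres is, by contrast, not an issue: with your choice $\chi_i \equiv 1$ near $\overline{B_i^0}$ the map is a pure translation on each ball (as in the paper's $\varphi$), so both the rigid-body constraint and the sphericity are preserved exactly. In short, the approach is right and the escape-to-infinity argument via translation invariance is a clean way to finish, but the Lipschitz step as written is incomplete until you say how the divergence-free constraint survives the change of variables; the paper's one-sided pullback-plus-correction is one economical way to do it.
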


\begin{notation}
	Since we always consider solutions in $\Hdot_{\sigma}(\IR^3)$ to problem
	 \eqref{eq:StokesBdryGivenForces}, we will not write the condition
	 ${v}(x) \to 0$  as  $|x| \to \infty$ in the rest of the paper.
\end{notation}

\begin{proof}
	We have seen that the velocities $V_i(t)$  are uniquely determined by the
	particle positions. Hence we can write $V_i(t) = W_i(X(t))$. 
	Then, it suffices to prove that the function $W_i$ is locally Lipschitz continuous away from particle
	collisions.
	
	We can estimate the $\Hdot$-norm of the solution $v$ to problem \eqref{eq:StokesBdryGivenForces},
	 \eqref{eq:inertialess}
	brutally using \eqref{eq:ACoercive} and the definition of $F$ in  \eqref{eq:inertialess}.
	In the following a constant $C$ might depend on $R$ and $N$, which are both fixed.
	\begin{equation}
	\begin{aligned}
		\|v\|^2_{\Hdot(\IR^3)} &= \sum_i F V_i  
		\leq C \sup_i  |V_i|  \leq C \sup_i \left | \fint_{B_i} v(y) \dd y \right | \\
		& \leq  C  \sup_i \|v\|_{L^1(B_i)}  \leq C  \sup_i \|v\|_{L^6(B_i)} \|1\|_{L^\frac{6}{5}(B_i)}
		\leq C \|v\|_{\Hdot(\IR^3)}.
	\end{aligned}
	\end{equation}
	Dividing by $\|v\|_{\Hdot(\IR^3)}$ yields
	\begin{equation}
		\label{eq:brutalVelocityEstimate}
		\|v\|_{\Hdot(\IR^3)} \leq C,
	\end{equation}
	independently of the particle positions.
	
	Fix 
	particle positions with non-touching particles $(X_i)_{1\leq i\leq N}$. 
	Then, there exists $\theta > 1$ such that 
	the closed balls $\overline{B_{ 2 \theta R}(X_i)}$ are pairwise disjoint.
	Note that we can choose the same value  $\theta $ for particle positions $\tilde{X_i}$ with
	$\|X - \tilde{X}\|$ sufficiently small. Therefore, any dependencies on $\theta$ in the estimates 
	that we are going to derive do not matter for proving local Lipschitz continuity.
	Let $(\tilde{X}_i)_{1\leq i\leq N}$ be another particle configuration with
	 \[
	 	\sup_i |X_i - \tilde{X}_i| \leq \frac{(\theta - 1)R}{4} 
	 \] 
	We define a deformation $\varphi$ by
	\[
		\varphi(x) := x + \sum_i (\tilde{X}_i - X_i) \eta_i(x),
	\]
	where $\eta_i \in C_c^\infty(B_{\theta R}(X_i))$ are chosen such that
	$0 \leq \eta_i \leq 1$, $\eta_i = 1 $ in $\overline{B_i}$ and 
	\[
		|\nabla \eta_i| \leq \frac{2}{(\theta - 1)R}.
	\]
	Then, $\phi$ is a diffeomorphism and $| \nabla \varphi |, | \nabla \varphi^{-1} | \leq C$.	
	
	Consider now the solutions $v$ and $\tilde{v}$ of problem \eqref{eq:StokesBdryGivenForces}
	with particle positions $X_i$ and $\tilde{X_i}$, respectively.
	We denote the velocities in the balls $B_i$ and $\tilde{B}_i$ by $V_i$ and $\tilde{V}_i$, respectively.
	We define $u_1 := \tilde{v} \circ \varphi$.
	Then,
	\[
		|\dv u_1| \leq C \sum_i |\tilde{X}_i - X_i| |\nabla \tilde{v}(\varphi(x))| 
		\chi_{B_{\theta R}(X_i) \backslash \overline{B_i}}.
	\]
	By Lemma \ref{lem:divergenceSolution}, there exists a function 
	$u_2 \in H^1_0(\cup_i B_{\theta R_i}(X_i) \backslash \overline{B_i})$ such that
	$\dv u_2 = \dv u_1$ and 
	\[
		\| u_2\|_{\Hdot(\IR^3)} \leq  C \|\dv u_1\|_{L^2(\IR^3)}
		\leq C \|X-\tilde{X}\| \| \tilde{v}\|_{\Hdot(\IR^3)}.
	\]
	Finally, we define $u = u_1 - u_2$. 	Then, $u = \tilde V_i$ in $B_i$. Moreover, using the equation,
	that $\tilde{v}$ satisfies, we observe
	\begin{align}
		-\Delta u + \nabla p &= - \dv g \quad \text{in} ~ \IR^3 \backslash \bigcup_{i=1}^N \overline{B_i}, \\
			\dv u &= 0 \quad \text{in} ~ \IR^3,
	\end{align}
	where 
	\[
		g(x) = - \sum_i ((\tilde{X}_i - X_i) \otimes \nabla \eta_i (x) ) \nabla \tilde{v}(\varphi(x)) - \nabla u_2.
	\]
	Thus,
	\begin{equation}
		\label{eq:estimateSourceG}
		\|g\|_{L^2(\IR^3)}  \leq C \|X-\tilde{X}\| \| \tilde{v}\|_{\Hdot(\IR^3)}.
	\end{equation}
	Moreover, with $\sigma_u$ denoting the stress corresponding to $u$,
	\[
		\int_{\partial B_i} \sigma_u n \dd \mathcal{H}^2 = -F,
	\]
	where $F$ is the force defined in \eqref{eq:inertialess}.
	Defining $w:= u-v$, we then deduce that $w$ satisfies the following equation in its weak formulation
	\[
		(\nabla w, \nabla \psi ) = (g, \nabla \psi) \qquad \text{for all} \quad \psi  \in \Hdot_\sigma(\IR^3)~ \text{with} ~ \psi = \mathrm{const} ~ \text{in} ~ B_i,~ 1 \leq i \leq N.
	\]
	Testing with $\psi = w$ and using the bound for $g$ from \eqref{eq:estimateSourceG},
	and \eqref{eq:brutalVelocityEstimate} for the norm of $\tilde{v}$, we deduce
	\[
		\|w\|_{\Hdot(\IR^3)} \leq C \|X-\tilde{X}\|.
	\] 
	Since $w = V_i - \tilde{V}_i$ in $B_i$, this yields
	\[
		\|V-\tilde{V}\| \leq C \|X-\tilde{X}\|,
	\]
	which concludes the proof.
\end{proof}

The following Lemma can be found in every standard textbook on Stokes equations, e.g., in \cite{Ga11}.

\begin{lemma}
	\label{lem:divergenceSolution}
	Let $\Omega \subset \IR^3$ be a bounded domain, which is locally Lipschitz, and assume that $f \in L^2(\Omega)$ satisfies
	\[
		\int_\Omega f = 0.
	\]
	Then, there exists $u \in H^1_0(\Omega)$ such that 
	\[
		\dv u = f
	\]
	and
	\[	
		\|\nabla u\|_{L^2(\Omega)} \leq  C \|f\|_{L^2(\Omega)},
	\]
	where the constant depends only on $\Omega$.
\end{lemma}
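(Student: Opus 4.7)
The plan is to prove this by Bogovski\u{\i}'s construction, which is the canonical route and adapts to any bounded locally Lipschitz domain. First I would reduce to the model case where $\Omega$ is star-shaped with respect to an interior ball $B \Subset \Omega$. Since $\Omega$ is bounded and locally Lipschitz, it admits a finite cover by such star-shaped Lipschitz subdomains $\Omega_1,\dots,\Omega_M$. Using a subordinate partition of unity together with an explicit redistribution of the masses of the resulting pieces (subtracting suitable constants supported in the overlaps), one writes $f = f_1 + \cdots + f_M$ with $\operatorname{supp} f_k \subset \Omega_k$, $\int_{\Omega_k} f_k = 0$, and $\|f_k\|_{L^2(\Omega_k)} \leq C \|f\|_{L^2(\Omega)}$, where $C$ depends only on $\Omega$ and the fixed cover.

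On each star-shaped piece $\Omega_k$, I would use Bogovski\u{\i}'s integral operator
\[
u_k(x) = \int_{\Omega_k} f_k(y) \int_0^\infty \omega\!\left(y + r \frac{x-y}{|x-y|}\right) (x-y)\left(\frac{r+|x-y|}{|x-y|}\right)^{2} \dd r \dd y,
\]
where $\omega \in C_c^\infty(B)$ is a fixed nonnegative cutoff with $\int \omega = 1$. A direct computation (using a change of variables and the star-shapedness) shows $\dv u_k = f_k$ in $\Omega_k$, and the support properties plus vanishing at $\partial \Omega_k$ give $u_k \in H^1_0(\Omega)$ after extension by zero. Summing over $k$ then produces a candidate $u = \sum_k u_k$ with $\dv u = f$ in $\Omega$.

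The main obstacle, and the only nontrivial analytical input, is the $L^2$ bound $\|\nabla u_k\|_{L^2} \leq C \|f_k\|_{L^2}$. Differentiating under the integral sign splits $\partial_j K(x,y)$ into a Calder\'on--Zygmund type singular kernel in the $(x-y)$ direction plus a weakly singular remainder. The singular part is handled by standard Calder\'on--Zygmund theory (or, in the ball case, by a direct Fourier computation), while the remainder is controlled by a Riesz potential estimate. This is exactly where the geometry of $\Omega_k$ enters the constant; boundedness of $\Omega_k$ and uniform star-shapedness give a finite constant depending only on $\Omega$.

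An alternative route that bypasses the explicit kernel is to invoke the closed range theorem applied to the gradient $\nabla \colon L^2_0(\Omega) \to H^{-1}(\Omega; \IR^3)$, whose range is closed precisely by the Ne\v{c}as inequality $\|p\|_{L^2(\Omega)} \leq C \|\nabla p\|_{H^{-1}(\Omega)}$ valid for mean-zero $p$ on bounded Lipschitz domains. The adjoint $-\dv \colon H^1_0(\Omega;\IR^3) \to L^2_0(\Omega)$ is then surjective with a bounded right inverse, giving the result abstractly. However, this simply moves the difficulty into proving Ne\v{c}as' inequality on Lipschitz domains, which itself relies on a Bogovski\u{\i}- or Lions-type argument, so I would ultimately prefer the constructive proof above.
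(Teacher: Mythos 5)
The paper does not prove this lemma; it simply cites Galdi's textbook [Ga11], where it appears as the standard Bogovski\u{\i} solvability result for the divergence equation. Your sketch follows exactly that reference's strategy (partition-of-unity reduction to star-shaped Lipschitz pieces, Bogovski\u{\i} integral operator with a Calder\'on--Zygmund estimate, or abstractly via Ne\v{c}as' inequality and the closed range theorem), so you are not taking a different route so much as reproducing the proof that the paper delegates to the literature.

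One small technical caveat in case you intended your kernel as more than schematic: the formula you wrote is not quite Bogovski\u{\i}'s operator. Starting from the standard form
\[
	u_k(x) = \int_{\Omega_k} f_k(y)\,(x-y) \int_1^\infty \omega\bigl(y + s(x-y)\bigr)\, s^{2}\dd s \dd y,
\]
the substitution $s = (r+|x-y|)/|x-y|$ produces $\omega\bigl(x + r\tfrac{x-y}{|x-y|}\bigr)$, not $\omega\bigl(y + r\tfrac{x-y}{|x-y|}\bigr)$, and also introduces a Jacobian factor $1/|x-y|$ that is absent from your expression. Equivalently, in the form with radial variable $r$ from $|x-y|$ to $\infty$ the argument $y + r\tfrac{x-y}{|x-y|}$ is correct, but then the weight is $r^{2}/|x-y|^{3}$ and the lower limit is $|x-y|$, not $0$. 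None of this affects the structure of the argument --- the divergence identity and the Calder\'on--Zygmund bound go through for the correctly written kernel --- but the formula as stated would not yield $\dv u_k = f_k$.
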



\section{Approximations for the velocity field $v_\eps$}
	\label{sec:VariationalProblem}

The solution ${v_\eps}$ to problem \eqref{eq:StokesBdryGivenForces} satisfies 
\begin{equation}
\begin{aligned}
	-\Delta {v_\eps} + \nabla q &= h \quad \text{in} ~ \IR^3, \\
	\dv {v_\eps} &= 0 \quad \text{in} ~ \IR^3,
\end{aligned}
\end{equation}
where the force density is given by 
\begin{equation}
	\label{eq:forceDensityOfV}
 h_\eps = \sum_i h_i = \sum_i - \sigma n \delta_{\partial B_i}.  
\end{equation}
The problem is that this force density $h$ is only implicitly given by the total forces on each particle,
which is given by $F_\eps$ defined in \eqref{eq:inertialess},
and the constraint of constant velocity at every particle.

We want to replace this force density by some quantity depending only on the total force $F_\eps$
defined in \eqref{eq:inertialess}, which is minus the integral of $h_i$.
There are two convenient choices for this replacement, either a uniform force distribution in the particles, or
a uniform force distribution on their boundaries.

More precisely, we define
\begin{equation}
	\label{eq:defFi}
	{f}_i := \frac{F_\eps}{| \partial B_i|} \delta_{\partial B_i},
\end{equation}
and ${u_\eps} \in \Hdot_\sigma(\IR^3)$ to be the solution to the equation
\begin{equation}
	\label{eq:u}
	\begin{aligned}
	-\Delta {u_\eps} + \nabla p &= \sum_i {f}_i =: {f_\eps}, \\
		\dv {u_\eps} &= 0 .
	\end{aligned}
\end{equation}

Furthermore, we define
\begin{equation}
	\label{eq:defTildeFi}
	\tilde{f}_i := \frac{F_\eps}{|  B_i|} \chi_{ B_i},
\end{equation}
and $\tilde{u}_\eps \in \Hdot_\sigma(\IR^3)$ to be the solution to the equation
\begin{equation}
	\label{eq:uTilde}
	\begin{aligned}
	-\Delta \tilde{u}_\eps + \nabla p &= \sum_i \tilde{f}_i =: \tilde{f}_\eps, \\
		\dv \tilde{u}_\eps &= 0 .
	\end{aligned}
\end{equation}

Notice that both $u_\eps$ and $\tilde{u}_\eps$ satisfy the constraint of the total force acting on each particle \eqref{eq:inertialess}, but they (in general) both fail to be constant inside of the particles.

Away from the particle,  $u_\eps$ and $\tilde{u}_\eps$ can be expected to differ only little since the particles are very small.
This is expressed by the estimate in Lemma \ref{lem:estimateUUTilde}.
Nevertheless, it turns out that both approximations, ${u_\eps}$ and $ \tilde{u}_\eps$, are very useful.

The force $f_i$ concentrated on the boundary has the nice property that it
corresponds to the drag force of a particle moving the fluid. Thus, the force at each particle 
does not create any change of the velocity field inside the particle itself but only inside the other particles.
More precisely, we define $w_i \in \Hdot_\sigma(\IR^3)$ to be the solution to
\begin{equation}
	\begin{aligned}
	-\Delta {w_i} + \nabla p &= {f}_i, \\
		\dv {w_i} &= 0.
	\end{aligned}
\end{equation}
Then, it is well known that for all $x \in B_i$
\begin{equation}
\label{eq:2Over9}
	w_i(x) = \frac{F_\eps}{6 \pi R_\eps} =   \frac{2}{9 N_\eps R_\eps}  \frac{(\rho_p - \rho_f)g}{\mu}
	 = \frac{2}{9} \xi^2_\eps e,
\end{equation}
where $e \in \IR^3$ a constant independent of $\eps$.
The fact that $w_i$ is constant inside $B_i$ will prove to be very important for the estimates of the difference of $u_\eps$ and $v_\eps$.

On the other hand, choosing the force uniformly distributed inside the whole particles, we get a direct 
relation between the (rescaled) mass density of the particles $\rho_\eps$ and the approximation of the fluid velocity.
We recall from \eqref{eq:rescaledMass} the definition of $\rho_\eps$
 \[
 	\rho_\eps = \frac{1}{\phi_\eps}\sum_i \chi_{B_i}.
 \]
 Thus,
 \begin{equation}
 \label{eq:massDensityIsSource}
 	\hat{f_\eps} = \frac{F_\eps}{|B_i|} \phi_\eps \rho_\eps = e \rho_\eps.
 \end{equation}
  The dynamics \eqref{eq:Velocity}, \eqref{eq:StokesBdryGivenForces} imply that the
   particles are transported by the velocity field $v_\eps$, i.e,
  \[
  	\partial_t \rho_\eps + {v}_\eps \cdot \nabla \rho_\eps = 0.
  \]
Formally replacing $v_\eps$ by $\tilde{u}_\eps$ and using \eqref{eq:massDensityIsSource} yields
\begin{equation}
	\begin{aligned}
	\partial_t \rho_\eps + \tilde{u}_\eps \cdot \nabla \rho_\eps 
	&= (\tilde{u}_\eps - v_\eps) \cdot \nabla \rho_\eps, \\
	-\Delta \tilde{u}_\eps + \nabla p &= e \rho_\eps, \\
		\dv \tilde{u}_\eps &= 0 .
	\end{aligned}
\end{equation}
This  almost looks  like the macroscopic equation \eqref{eq:transportStokes}.
As explained in the introduction, the missing term $\frac{2}{9}\xi_\ast^2 e$
appears in the limit $\eps \to 0$ since the particles move faster than the macroscopic fluid velocity. 
Thus, it formally remains to prove that the term $(\tilde{u}_\eps - v_\eps) \cdot \nabla \rho_\eps$
vanishes in the limit $\eps \to 0$.

\subsection{Estimates for $u_\eps - v_\eps$ in $\Hdot(\IR^3)$}
\label{sec:L^2Estimates}
In order to control the motion of the particles, we need estimates of $u_\eps-v_\eps$ in $L^\infty$.
Those will be shown using the method of reflections in Section \ref{sec:LInfty}.
There, we will also rely on standard methods exploiting the structure of the linear PDEs that
$u_\eps$ and $v_\eps$ solve. In this subsection, we will explain this in detail and prove an 
$L^2$-estimate for $\nabla(u_\eps-v_\eps)$.
Since we consider fixed $\eps$, we will not write the index in the following.

It is interesting to notice that both ${u}$ and ${v}$ are solutions to variational problems.
We define
\[
	E(w) := \frac{1}{2} \int_{\IR^3} |\nabla w|^2 \dd x - \sum_i F \fint_{B_i} w \dd y .
\]
Then, $u$ is the minimizer of $E$ in $\Hdot_\sigma(\IR^3)$.
Moreover, $v$ is the minimizer of $E$ in the subspace
\[
	W := \{w \in \Hdot_\sigma(\IR^3)\colon w = \mathrm{const} ~ \text{in} ~ B_i,~ 1 \leq i \leq N \}.
\]

In particular, $v$ is the orthogonal projection from $\Hdot_\sigma(\IR^3)$ to $W$.
Indeed, let $w \in W$. Then,
\[	
	\langle u - v, w \rangle = \sum_i \langle f_i - h_i, w \rangle = 0.
\]

In particular, $\| u - v \|_{\Hdot(\IR^3)} \leq \| u - w \|_{\Hdot(\IR^3)}$ for all $w \in W$.
We will exploit this by choosing $w$ in a smart way in order to get an estimate for $u-v$.
For this we need the following lemmas. The first one is a standard extension estimate.

\begin{lemma}
	\label{lem:extest}
	For $ r>0 $ and $ x \in \IR^3$,  
	let $ H_r := \left\{ u \in  H^1_\sigma(B_r(x)) \colon \int_{ B_r(x)} u = 0 \right\} $. 
	Then, for all $ r >0 $, there exists an extension operator $ E_r \colon H_r \to H^1_{\sigma,0}(B_{2r}(x)) $ such that
	\begin{equation}
		\label{eq:extest}
		\| \nabla E_r u \|_{L^2(B_{2r}(x))} \leq C \| \nabla u \|_{L^2(B_r(x))} \qquad \text{for all} \quad u \in H_r,
	\end{equation}
	where the constant $ C $ is independent of $ r $.
\end{lemma}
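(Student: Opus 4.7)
The strategy is reduction to the unit ball via scaling, followed by the classical three-step construction: extend via a standard Sobolev extension, cut off to obtain compact support, and correct the divergence using Lemma \ref{lem:divergenceSolution}.

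First I would set $\tilde u(y) := u(x + r y)$ and verify that this change of variables sends $H_r$ to its analogue on $B_1(0)$, with both sides of \eqref{eq:extest} scaling as $r^{1/2}$. Hence it suffices to construct an extension operator $\tilde E$ for the unit ball with an absolute constant, and then take $(E_r u)(z) := (\tilde E \tilde u)((z-x)/r)$.

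Working on the unit ball, I would apply a standard bounded Sobolev extension to obtain $\bar u \in H^1(B_2)$ with $\bar u|_{B_1} = \tilde u$ and $\|\bar u\|_{H^1(B_2)} \leq C\|\tilde u\|_{H^1(B_1)}$; the Poincaré inequality, available because $\int_{B_1}\tilde u = 0$, turns the right-hand side into $C\|\nabla \tilde u\|_{L^2(B_1)}$. Fix a cutoff $\chi \in C_c^\infty(B_{3/2})$ with $\chi \equiv 1$ on $B_1$ and set $\psi := \chi \bar u \in H^1_0(B_2)$. Then $\psi = \tilde u$ on $B_1$, but $\psi$ fails to be divergence-free on the annulus $A := B_{3/2} \setminus \overline{B_1}$. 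Its divergence $g := \dv \psi = \chi\, \dv \bar u + \nabla \chi \cdot \bar u$ is supported in $\overline A$, and the divergence theorem combined with $\dv \tilde u = 0$ yields the compatibility condition $\int_A g = -\int_{\partial B_1} \tilde u \cdot n \dd \mathcal{H}^2 = 0$.

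To restore the divergence-free property without spoiling the behavior in $B_1$, I would apply Lemma \ref{lem:divergenceSolution} on the Lipschitz annulus $A$ to obtain $w \in H^1_0(A)$ with $\dv w = g$ and $\|\nabla w\|_{L^2(A)} \leq C\|g\|_{L^2(A)} \leq C\|\nabla \tilde u\|_{L^2(B_1)}$. Extending $w$ by zero, the function $\tilde E \tilde u := \psi - w$ lies in $H^1_{\sigma,0}(B_2)$, coincides with $\tilde u$ on $B_1$ (since $w$ vanishes there), and enjoys the desired gradient bound by combining the previous estimates. The only non-routine step is this divergence correction: invoking Lemma \ref{lem:divergenceSolution} on the annulus rather than on $B_2$ is precisely what keeps $w \equiv 0$ on $B_1$, and the integral condition $\int_A g = 0$ the solver requires reflects that the normal flux of $\tilde u$ through $\partial B_1$ vanishes.
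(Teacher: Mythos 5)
Your proof is correct and uses the same scaling reduction to the unit ball that the paper uses. The only difference is that where the paper simply asserts the existence of a continuous extension operator $H^1_\sigma(B_1) \to H^1_{\sigma,0}(B_2)$, you explicitly construct it (standard extension $+$ cutoff $+$ divergence correction via Lemma~\ref{lem:divergenceSolution}), which fills in a step the paper treats as a black box; both then conclude via the Poincar\'e inequality on $H_1$.
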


\begin{remark}
	An analogous statement holds for $H_r$ replaced by $\left\{ u \in  H^1_\sigma(B_r(x)) \colon \int_{\partial B_r(x)} u = 0 \right\}$.
\end{remark}

\begin{proof}
	For $ r = 1 $, let $ E_1 \colon H^1(B_1(x)) \to H^1_{\sigma,0}(B_2(x)) $ be a continuous extension operator.
	Then, by the Poincaré inequality in $ H_1 $, we get for all $ u \in H_1 $
	\begin{equation}
		\| \nabla E_1 u \|_{L^2(B_{2}(x))} \leq \| E_1 u \|_{H^1(B_{2}(x))} \leq C \| u \|_{H^1(B_1(x))}
		\leq C \| \nabla u \|_{L^2(B_1(x))}
	\label{eq:extest1}
	\end{equation}
	The assertion for general $ r > 0 $ follows from scaling by defining 
	$ (E_r)u(x) := (E_1 u_r)({\frac{x}{r}}) $ where $ u_s(x) := u(sx) $.
\end{proof}

In many of the estimates in this paper, terms like $\sum_{i \neq j} \frac {1}{|X_i - X_j|^k}$ 
for $k = 2,3$ appear. In the next lemma, we prove an estimate for those quantities in terms
of $Y$ from Definition \ref{def:Y} by approximating the sum by an integral.

\begin{lemma}
	\label{lem:brutalEstimatesSums}
	There exists a constant $C_\ast$ which depends only on $c_0$ from assumption \ref{cond:particlesSeparated} such that
	\begin{equation}
		\label{eq:brutalEstimateAlpha}
		\sup_j\frac{1}{N}  \sum_{i \neq j} \frac {1}{|X_i - X_j|^2} \leq C_\ast Y^3,
	\end{equation}
	and 
	\begin{equation}
		\label{eq:brutalEstimateDelta}
		\sup_j \frac{\phi}{N} \sum_{i \neq j} \frac {1}{|X_i - X_j|^3} 
		\leq C_\ast \phi Y^3 (\log (N) + \log(Y)).
	\end{equation}
\end{lemma}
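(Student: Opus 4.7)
My plan is to handle both bounds in two steps: first reduce each sum to the analogous sum at the \emph{initial} particle positions by factoring out $Y$, then exploit the separation assumption \ref{cond:particlesSeparated} through the standard ``disjoint balls'' comparison with a radial integral that is singular at $X_j^0$.

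For the reduction, Definition \ref{def:Y} gives $|X_i(s)-X_j(s)| \geq |X_i^0-X_j^0|/Y$ for every $i \neq j$ and $0 \leq s \leq t$, hence
\[
  \sum_{i \neq j} \frac{1}{|X_i(s)-X_j(s)|^k} \leq Y^k \sum_{i \neq j} \frac{1}{|X_i^0-X_j^0|^k}
\]
for $k \in \{2,3\}$. Since $Y \geq 1$, it suffices to establish the two inequalities for initial positions and then pull out a factor $Y^3$. Writing $d_0 := d_{\eps,\min}(0)$, assumption \ref{cond:particlesSeparated} gives $d_0 \geq (c_0/N)^{1/3}$ and guarantees that the open balls $B_{d_0/2}(X_i^0)$ are pairwise disjoint. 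For $i \neq j$ and $x \in B_{d_0/2}(X_i^0)$ one has $|x-X_j^0| \geq \tfrac12|X_i^0-X_j^0|$, so averaging and summing yields
\[
  \sum_{i \neq j} \frac{1}{|X_i^0-X_j^0|^k} \leq \frac{C}{d_0^3} \int_{U_j} \frac{\dd x}{|x-X_j^0|^k}, \qquad U_j := \bigcup_{i \neq j} B_{d_0/2}(X_i^0).
\]
Since $U_j$ has total volume at most $CNd_0^3$ and avoids $B_{d_0/2}(X_j^0)$, and the integrand is radially decreasing about $X_j^0$, I would then dominate this integral by the radial integral over the annulus $\{d_0/2 \leq |x-X_j^0| \leq R_j\}$ with $R_j \leq CN^{1/3} d_0$.

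The remaining computation is routine. For $k=2$, the radial integral is $\leq CN^{1/3} d_0$, so $\sum \leq CN^{1/3}/d_0^2 \leq CN/c_0^{2/3}$; dividing by $N$ and absorbing $Y^2$ into $Y^3$ yields \eqref{eq:brutalEstimateAlpha}. For $k=3$, the radial integral is $\leq C\log(2R_j/d_0) \leq C\log N$, so $\sum \leq CN\log N/c_0$; multiplying by $\phi Y^3/N$ produces $C_\ast \phi Y^3 \log N$, which is dominated by $C_\ast \phi Y^3(\log N + \log Y)$ because $\log Y \geq 0$. No serious obstacle is anticipated: the proof is bookkeeping around two elementary ingredients (disjoint-balls averaging and the radial comparison), and the crucial point is that the rearrangement step needs $U_j$ to be separated from the singularity at $X_j^0$ by a positive distance, which is precisely the separation $d_0/2$ guaranteed by \ref{cond:particlesSeparated}.
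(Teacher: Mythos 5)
Your proof is correct. The underlying idea is the same as the paper's (compare the discrete sum with a radial integral, exploiting the minimum-separation hypothesis), but you implement the comparison differently: you first transport the separation back to time zero by factoring out $Y^k$, then apply a bathtub/rearrangement argument using the exact volume bound $|U_j|\leq CNd_0^3$ to reduce to an annulus with outer radius $R_j\leq CN^{1/3}d_0$. The paper instead works directly at time $t$ with the piecewise-constant density $\psi = d_{\min}(t)^{-3}\sum_i \chi_{B_{d_{\min}(t)/2}(X_i)}$, bounds its $L^1$ and $L^\infty$ norms, and splits the integral at a unit ball. Your version is in fact slightly sharper: for $k=2$ you get the bound $C_\ast Y^2$ (which the paper only records as an afterthought in a remark obtained by optimizing the splitting radius), and for $k=3$ you get $C_\ast\phi Y^3\log N$ directly, whereas the paper's split naturally produces the additional $\log Y$ term from $\log(1/d_{\min}(t))=\log(1/d_0)+\log Y$. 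Both fit inside the stated inequality since $Y\geq 1$. The one thing I would tighten in writing this up is to say explicitly how the rearrangement step works — that $U_j$ avoids $B_{d_0/2}(X_j^0)$ because the balls $B_{d_0/2}(X_i^0)$ are pairwise disjoint, and that the maximizer subject to the volume and avoidance constraints is the annulus — but the argument as sketched is complete.
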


\begin{proof}
We define $\psi \colon \IR^3 \to \IR$ by
	\[
		\psi = d_{\min}^{-3} \sum_i \chi_{B_{d_{\min}(t)/2}(X_i)},
	\]
	where $d_{min}$ denotes again the minimal particle distance.
	By Definition \ref{def:Y} and assumption \ref{cond:particlesSeparated}, we have
	\[
		\|\psi\|_{L^\infty(\IR^3)} \leq d_{\min}^{-3} \leq C_\ast N Y^3,
	\]
	and 
	\[
		\|\psi\|_{L^1(\IR^3)} = C N.
	\]
	Thus, for all particles $j$,
	\begin{equation}
		\label{eq:brutalEstimateAlpha1}
	\begin{aligned}
		\frac{1}{N} \sum_{i \neq j} \frac {1}{|X_i - X_j|^2} 
		&\leq   \frac{C}{N} \int_{\IR^3} \frac {\psi(y)}{|y  - X_j|^2} \dd y \\
		& \leq \frac{C}{N} \left( \int_{\IR^3 \backslash B_1(X_j)} \frac {\psi(y)}{|y  - X_j|^2} \dd y
			+ \int_{ B_1(X_j)} \frac {\psi(y)}{|y  - X_j|^2} \dd y \right)\\
		&\leq  \frac{C}{N} \left(\|\psi\|_{L^1(\IR^3)} + \|\psi\|_{L^\infty(\IR^3)}\right) \\
		&\leq C_\ast Y^3,
	\end{aligned}
	\end{equation}
	where we used in the last step that $Y \geq 1$. This proves \eqref{eq:brutalEstimateAlpha}
	
	To show \eqref{eq:brutalEstimateDelta}, we estimate for any $j$
	\begin{align}
		\frac{\phi}{N} \sum_{i \neq j} \frac {1}{|X_i - X_j|^3} 
		&\leq   \frac{C \phi}{N} \int_{\IR^3 \backslash B_{d_{\min}(t)/2}(X_j)} \frac {\psi(y)}{|y  - X_j|^3} \dd y \\
		& \leq \frac{C\phi}{N} \left( \int_{\IR^3 \backslash B_1(X_j)} \frac {\psi(y)}{|y  - X_j|^3} \dd y
			+ \int_{ B_1(X_j) \backslash B_{d_{\min}(t)/2}(X_j)} \frac {\psi(y)}{|y  - X_j|^3} \dd y \right)\\
		&\leq  \frac{C\phi}{N} \left(\|\psi\|_{L^1(\IR^3)} +
		 \|\psi\|_{L^\infty(\IR^3)} \log \left( \frac{1}{d_{\min}(t)} \right)\right) \\
		&\leq C_\ast \phi Y^3 \left(\log \left( \frac{1}{d_{\min}(t)} \right) + \log(Y)\right) \\
		& \leq C_\ast \phi Y^3 (\log (N) + \log(Y)).
	\end{align}
\end{proof}

\begin{remark}
	By splitting the integral in \eqref{eq:brutalEstimateAlpha1} with $B_r(X_j)$ instead of $B_1(X_j)$, 
	one can choose the optimal $r$ to find
		\begin{equation}
		\sup_j\frac{1}{N}  \sum_{i \neq j} \frac {1}{|X_i - X_j|^2} \leq C_\ast Y^2,
	\end{equation}
\end{remark}

\begin{lemma}
	\label{lem:nablaU}
	The function $u$ defined in \eqref{eq:u} satisfies
	\[
		\|\nabla u \|_{L^\infty(\cup_i B_i)} 
		\leq \frac{C}{N} \sup_i \sum_{j\neq i} \frac{1}{|X_i - X_j|^2}.
	\]
\end{lemma}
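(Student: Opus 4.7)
The plan is to exploit linearity together with the fact recorded in \eqref{eq:2Over9}. By linearity of \eqref{eq:u}, $u = \sum_j w_j$, where each $w_j$ solves the Stokes problem with source $f_j$. Since \eqref{eq:2Over9} states that $w_j$ is \emph{constant} on $B_j$, we have $\nabla w_j \equiv 0$ on $B_j$, so for $x \in B_i$ the self-term drops out:
\[
\nabla u(x) = \sum_{j \neq i} \nabla w_j(x).
\]
It therefore suffices to bound each cross-term pointwise on $B_i$.

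For $j \neq i$ I would use the explicit representation of $w_j$ outside $B_j$. Since $f_j$ is a uniform surface force on $\partial B_j$ with total $F$, the trace of $w_j$ on $\partial B_j$ is the constant $F/(6\pi R_\eps)$ identified in \eqref{eq:2Over9}; by uniqueness of the exterior Stokes problem, $w_j$ therefore coincides on $\IR^3 \setminus \overline{B_j}$ with the classical Stokes flow past a rigid sphere translating at this velocity. The textbook closed-form expression is a Stokeslet of strength $F$ centered at $X_j$ plus a potential-doublet correction of size $R_\eps^2$, and differentiation yields
\[
|\nabla w_j(x)| \leq C \left( \frac{|F|}{|x-X_j|^2} + \frac{R_\eps^2 |F|}{|x-X_j|^4} \right) \quad \text{for } x \notin B_j.
\]
Assumption \ref{cond:particlesSeparated} together with Remark \ref{rem:particlesNotTouching} ensures $d_{\eps,\min} \geq 4 R_\eps$, so for $x \in B_i$ and $j \neq i$
\[
|x-X_j| \geq |X_i - X_j| - R_\eps \geq \tfrac{1}{2}|X_i - X_j| \geq 2 R_\eps.
\]
In particular $R_\eps^2/|x-X_j|^2 \leq 1/4$, the doublet contribution is dominated by the Stokeslet, and $|\nabla w_j(x)| \leq C|F|/|X_i-X_j|^2$. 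Combined with $|F| \leq C/N$ from \eqref{eq:inertialess}, summing over $j \neq i$ and taking the supremum over $i$ and $x \in B_i$ gives the claimed bound.

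The only nontrivial ingredient is the Stokeslet-plus-doublet identity for $w_j$ outside $B_j$. One may invoke it directly from the classical theory of Stokes flow past a sphere, or derive it from scratch by writing $w_j(x) = (F/|\partial B_j|)\int_{\partial B_j} G(x-y)\,dS(y)$, where $G$ is the Oseen tensor, and evaluating the spherical mean via Newton-type shell identities for the $1/|\cdot|$ and potential-theoretic parts of $G$. Everything else is routine.
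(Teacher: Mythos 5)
Your proof is correct and follows essentially the same approach as the paper: decompose $u = \sum_j w_j$, drop the self-term on $B_i$ using \eqref{eq:2Over9}, and bound the cross-terms by the $1/|x-X_j|^2$ decay of the gradient of the fundamental solution. The only cosmetic difference is that the paper bounds $\nabla w_j(x) = \tfrac{F}{|\partial B_j|}\int_{\partial B_j}\nabla\Phi(x-y)\,dS(y)$ directly via $|\nabla\Phi(z)|\leq C/|z|^2$, whereas you first identify $w_j$ (outside $B_j$) with the explicit translating-sphere Stokes flow and then bound the Stokeslet and doublet pieces separately; the doublet term you introduce is a step the paper's direct estimate absorbs automatically, but your handling of it is sound given $d_{\eps,\min}\geq 4R_\eps$.
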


\begin{proof}
	 We have $u = \sum_i w_i$, where $w_i = S f_i$ with $f_i$ defined as in \eqref{eq:defFi} and $S \colon \dot{H}^{-1}(\IR^3) \to \Hdot_\sigma(\IR^3)$
the solution operator for Stokes equations. 
 Recall from \eqref{eq:2Over9} that $\nabla w_i = 0 $ in $B_i$.
 The solution operator $S$ can be written as the convolution with the fundamental solution, also known as the Oseen tensor,
\begin{equation}
	\label{eq:Oseen}
	\Phi(x) = \frac{1}{8\pi}\left( \frac{1}{|x|} + \frac{x \otimes x}{|x|^3} \right).
\end{equation}
 Thus, we observe 
 for all particles $i \neq j$
 \[
 	\| \nabla u_j \|_{L^\infty(B_i)} \leq C|F| \frac{1}{|X_i - X_j|^2 - 2R|X_i - X_j|} \leq C |F| 
 	\frac{1}{|X_i - X_j|^2}.
 \]
 Thus, recalling $|F| = \frac{C}{N}$ from \eqref{eq:inertialess}, 
  deduce
\[
 	\|\nabla u \|_{L^\infty(B_i)} = \big\| \sum_{j \neq i} \nabla u_j\big\|_{L^\infty(B_i)} 
 	\leq \frac{C}{N}\sum_{j\neq i} \frac{1}{|X_i - X_j|^2}.  \qedhere
\]
\end{proof}

\begin{proposition}
\label{pro:goodApproximation}
There exists a constant $C_\ast < \infty$ which depends only on $c_0$ from assumption
\ref{cond:particlesSeparated} such that
\begin{equation}
\label{eq:estimatedByVolumeFraction}
	\| u - v \|^2_{\Hdot(\IR^3)} \leq \sum_i C \| \nabla u \|^2_{L^2(B_i)} 
	\leq C_\ast \phi Y^3.
\end{equation}
\end{proposition}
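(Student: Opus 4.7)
The inequality splits cleanly into two independent steps: the first is a Hilbert-space projection argument that reduces bounding $\|u-v\|_{\Hdot}$ to choosing a single well-chosen competitor in $W$, and the second is a purely pointwise estimate on $\nabla u$ inside each ball, together with a combinatorial bound on the resulting sum. All the necessary ingredients (the orthogonal projection viewpoint, Lemma~\ref{lem:extest}, Lemma~\ref{lem:nablaU} and Lemma~\ref{lem:brutalEstimatesSums}) are already available.

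\textbf{First inequality.} Since $v$ is the orthogonal projection of $u$ onto $W$ with respect to the $\Hdot$-inner product, one has $\|u-v\|_{\Hdot} = \inf_{w \in W} \|u-w\|_{\Hdot}$, so it suffices to display a single good competitor $w \in W$. The natural choice is to make $w$ equal to the constant $c_i := \fint_{B_i} u$ inside each $B_i$ and leave it equal to $u$ away from the particles, interpolating in a thin shell. Concretely, since $u - c_i$ has zero mean on $B_i$ and is divergence-free, Lemma~\ref{lem:extest} yields $\varphi_i \in H^1_{\sigma,0}(B_{2R_i}(X_i))$ with $\varphi_i \equiv u - c_i$ on $B_i$ and $\|\nabla \varphi_i\|_{L^2(\IR^3)} \leq C\|\nabla u\|_{L^2(B_i)}$. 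By Remark~\ref{rem:particlesNotTouching} the balls $B_{2R_i}(X_i)$ are pairwise disjoint once $\eps$ is small. Setting $w := u - \sum_i \varphi_i$ gives $w \in \Hdot_\sigma(\IR^3)$ with $w \equiv c_i$ on each $B_i$, hence $w \in W$, and the disjoint-support structure collapses the cross terms:
\[
\|u-v\|_{\Hdot}^2 \;\leq\; \|u-w\|_{\Hdot}^2 \;=\; \sum_i \|\nabla \varphi_i\|_{L^2(\IR^3)}^2 \;\leq\; C \sum_i \|\nabla u\|_{L^2(B_i)}^2.
\]

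\textbf{Second inequality.} For each $i$, bound the $L^2$-norm by the volume times the $L^\infty$-norm and insert Lemma~\ref{lem:nablaU}:
\[
\sum_i \|\nabla u\|_{L^2(B_i)}^2 \;\leq\; \sum_i |B_i| \,\|\nabla u\|_{L^\infty(B_i)}^2 \;\leq\; \frac{CR^3}{N^2}\sum_i T_i^2,\qquad T_i := \sum_{j\neq i}\frac{1}{|X_i - X_j|^2}.
\]
Recalling $NR^3 = \phi$, it remains to show $\sum_i T_i^2 \lesssim N^3 Y^3$. This is extracted from the sum estimate \eqref{eq:brutalEstimateAlpha} in Lemma~\ref{lem:brutalEstimatesSums} by combining a pointwise bound on $T_i$ with the aggregate bound $\sum_i T_i$ obtained by summing the pointwise bound (or, more carefully, by an optimized-cutoff variant of the same integral comparison that proves the Lemma), then using the inequality $\sum_i T_i^2 \leq (\sup_i T_i)(\sum_i T_i)$.

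\textbf{Main obstacle.} The first inequality is almost immediate once one recognizes the orthogonal-projection structure, and the bulk of the work goes into the second. The subtlety there is that a completely naive route, in which one inserts the global uniform bound $\sup_i T_i \leq C_\ast N Y^3$ twice (once for each factor of $T_i$ in the square), loses a power of $Y$ and fails to give the claimed exponent. The improvement comes from treating $\sum_i T_i$ by the symmetric double-sum / integral comparison rather than by brutal Cauchy--Schwarz, so that the total cost of summing in $i$ is paid only once.
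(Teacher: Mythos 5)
Your proof of the first inequality is exactly the paper's: recognize $v$ as the orthogonal projection of $u$ onto $W$, build a competitor $w = u - \sum_i \varphi_i$ using the divergence-free extensions from Lemma~\ref{lem:extest}, and use disjointness of the balls $B_{2R}(X_i)$ (Remark~\ref{rem:particlesNotTouching}) to split $\big\|\sum_i \varphi_i\big\|^2_{\Hdot(\IR^3)}$ into the sum of $\|\nabla u\|^2_{L^2(B_i)}$. Identical in every respect.

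For the second inequality you deviate, and the deviation contains a gap. The paper simply uses $\|\nabla u\|^2_{L^2(B_i)} \leq C R^3 \|\nabla u\|^2_{L^\infty(B_i)}$, inserts the uniform bound from Lemmas~\ref{lem:nablaU} and~\ref{lem:brutalEstimatesSums}, and sums over $i$. You correctly observe that squaring this sup bound costs a power of $Y$; in fact the paper's displayed intermediate step~\eqref{eq:nablauInL2} writes $C_\ast R^3 Y^3$ where the lemmas it cites actually yield $C_\ast R^3 Y^6$. But the repair you propose, $\sum_i T_i^2 \leq (\sup_i T_i)(\sum_i T_i)$, does not deliver $N^3 Y^3$: summing the pointwise bound gives $\sum_i T_i \leq N \sup_i T_i \leq C_\ast N^2 Y^3$, hence $\sum_i T_i^2 \leq C_\ast^2 N^3 Y^6$, exactly as in the naive route. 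Even with the sharper $\sup_i T_i \leq C_\ast N Y^2$ from the remark following Lemma~\ref{lem:brutalEstimatesSums}, the integral comparison gives only $\sum_i T_i \leq C_\ast N^2 Y^2$ (and this is sharp when the particles are clustered in a ball of radius of order $N^{1/3} d_{\min}$), so $\sum_i T_i^2 \leq C_\ast^2 N^3 Y^4$. The decisive bound $\sum_i T_i^2 \lesssim N^3 Y^3$ that your argument needs is therefore not established and, on the worst-case configuration, appears to fail. Your diagnosis of the obstacle is correct, but the proposed fix does not overcome it; note that the $Y^3$ in the proposition's statement may well be a misprint, since the paper's own later estimates (e.g.\ in Lemma~\ref{lem:rhoTau}) work with the exponent $Y^6$ that the displayed argument actually produces.
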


\begin{proof}
For each $1 \leq i \leq N$, we can find $ w_i \in H^1_{0,\sigma}(B_{2R}(X_i))$ with 
 $\| \nabla w_i \|_{L^2(\IR^3)} \leq C \| \nabla u \|_{L^2(B_i)}$ and
 $w_i = u - (u)_i$ in $B_i$, where $ (u)_i := \fint_{B_i} u \dd x$.
Since the balls $B_{2R}(X_i)$ are assumed to be disjoint by Remark \ref{rem:particlesNotTouching}, we obtain $w := u - \sum_i w_i \in W$.
 Hence, by Lemma \ref{lem:extest},
 \begin{equation}
 \label{eq:projectionEstimate}
 	\| u - v\|^2_{\Hdot(\IR^3)}  \leq \| u - w \|^2_{\Hdot(\IR^3)} = \big\|\sum_i w_i \big\|^2_{\Hdot(\IR^3)} 
 	\leq \sum_i C \| \nabla u \|^2_{L^2(B_i)}.
 \end{equation}

 Hence, combining Lemma \ref{lem:nablaU} and Lemma \ref{lem:brutalEstimatesSums} yields
 \begin{equation}
 	 \| \nabla u \|^2_{L^2(B_i)} \leq \big\| \sum_{j \neq i} \nabla u_j\big\|^2_{L^2(B_i)} 
 	 \leq  C_\ast  R^3 Y^3 
 \label{eq:nablauInL2}
\end{equation}

Summing over all the particles and using \eqref{eq:projectionEstimate} and \eqref{eq:nablauInL2} yields
\[
	\| u - v \|^2_{\Hdot(\IR^3)} \leq \sum_i C \| \nabla u \|^2_{L^2(B_i)} \leq C_\ast N R^3 = C_\ast \phi.  \qedhere
\]
\end{proof}

\section{Estimates for $u_\eps-v_\eps$ in $L^\infty$ by the method of reflections}
\label{sec:LInfty}

In this section, we prove smallness of $v - u$ (again, we drop the index $\eps$) in $L^\infty(\IR^3)$, 
stated in Proposition \ref{pro:LInftyEstimateSeries}. We use the method of reflections
in the framework of orthogonal projections that has been investigated in \cite{HV16}.
As we will see below, the method has better convergence properties for the  problem \eqref{eq:StokesBdryGivenForces} that $v$ solves than for the Stokes equations with Dirichlet boundary conditions that have been studied in \cite{HV16}.
Indeed, for the latter the method only converges provided that the electrostatic capacity $\mu = \xi^{-2}$ is sufficiently 
small.
In \cite{HV16} this problem has been overcome by a suitable resummation procedure.

In the case at hand, however, we will see below, that the higher order terms  are associated to
force densities that are dipoles in the particles instead of monopoles. This makes the method convergent 
if 
\[
		\sup_j \frac{\phi}{N} \sum_{i \neq j} \frac {1}{|X_i - X_j|^3}
\]  
is sufficiently small. At time $t=0$, this follows from \ref{cond:particlesSeparated} and \ref{cond:phiLogN}.
For later times, we have to check that particles do not come too close.

We define 
\[
	W_i = \left\{ w \in \Hdot_\sigma(\IR^3) \colon w=\mathrm{const} ~ \text{in} ~ B_i \right\}.
\]
Let $P_i$ be the orthogonal projection from $ \Hdot_\sigma(\IR^3) $ to $W_i$ and $Q_i = 1 - P_i$.
We observe
\begin{equation}
	\label{eq:characterizationWPerp}
	W_i^\perp = \left\{ w \in \Hdot_\sigma(\IR^3) \colon \exists p \in L^2(\IR^3) ~ 
	 -\Delta w + \nabla p = 0  ~ \text{in} ~  \IR^3 \backslash \overline{B_i},
	~ \int_{\IR^3}  - \Delta w + \nabla p = 0 \right\}.
\end{equation}
Here, the first condition has to be interpreted in the weak sense. 
It  is satisfied for every $ w \in W_i^\perp$  since $\Hdot_{\sigma,0}(\IR^3 \backslash B_i) \subset W_i$.
Using the first condition, the second condition simply means
$\langle -\Delta w + \nabla p, \psi \rangle_{\dot{H}^{-1},\dot{H}^{1}} = 0 $ for all $\psi \in \Hdot(\IR^3)$ with $\psi = 1$ in 
$B_i$, and this follows directly from the definition of $W_i$.

The physical interpretation of the characterization \eqref{eq:characterizationWPerp} is that the force densities corresponding to functions in $ W_i^\perp$
 are dipoles in $\overline{B_i}$.

The method of reflections can now be stated as follows. 
Recall from the definition of $u$ in \eqref{eq:u} that $u$ satisfies the constraint of the total force acting on each particle \eqref{eq:inertialess} but fails to be constant inside of the particles.

By definition of the space $W_i$, $P_i u = (1-Q_i)u$ is constant in $B_i$. Moreover,
since $Q_i$ is a dipole, $(1-Q_i)u$ still satisfies \eqref{eq:inertialess}. Since such
a correction is needed for every particle, one makes the ansatz
\[
	u_1 = (1 - \sum_i Q_i) u.
\]
However, for every particle $i$, the term $\sum_{j \neq i} Q_j u$ again destroys the property of
constant velocity in $B_i$. 
Therefore, one repeats adding those correction terms. This leads to
\[
	 u_k  = \left(1 - \sum_i Q_i \right)^k u.
\]

For the proof that $u_k$ converges to $v$, we need the following lemmas.
Lemma \ref{lem:projectionInsideParticle} ensures that $(1 - Q_i) u_k = P_i u_k$  
does not differ too much from $u_k$ inside particle $i$.
Lemma \ref{lem:pCorrelationest} is used to exploit that $Q_j u_k$ is a dipole potential,
and therefore decays quickly.
Together, this yields $(1 - \sum_i Q_i) u_k \approx u_k$ inside of the particles.

\begin{lemma}
\label{lem:projectionInsideParticle}
	Let $w \in  \Hdot_\sigma(\IR^3) $. Then, $ P_i w = (w)_i$ in $B_i$, where $(w)_i = \fint_{\partial B_i} w$.
\end{lemma}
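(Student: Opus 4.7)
The plan is to characterize $P_i w$ through the orthogonality relation $\langle w - P_i w, \phi\rangle_{\Hdot} = 0$ for all $\phi \in W_i$, and to test it against a cleverly chosen three-parameter family of functions in $W_i$. Since $P_iw \in W_i$ by definition, it is equal to some constant $c_w \in \IR^3$ on $B_i$, so the task reduces to identifying $c_w$ with $(w)_i$.

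For each $a \in \IR^3$, I would take $\phi_a \in \Hdot_\sigma(\IR^3)$ to be the Stokes flow around the rigid sphere $B_i$ translating at velocity $a$, i.e.\ the unique solution with $\phi_a \equiv a$ inside $B_i$ and $\phi_a \to 0$ at infinity. This is precisely the function appearing in equation \eqref{eq:2Over9}, rescaled so that the interior value is $a$; the total force is $F_a = 6\pi R a$, and the associated force density is the uniform surface measure $f_a = \tfrac{6\pi R}{|\partial B_i|}\,a\,\delta_{\partial B_i}$. By construction $\phi_a \in W_i$.

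Given this family, the computation is immediate. For every $\psi \in \Hdot_\sigma(\IR^3)$, the weak formulation of the Stokes equation satisfied by $\phi_a$, together with $\dv \psi = 0$ to drop the pressure term, yields
\[
\langle \phi_a, \psi\rangle_{\Hdot} = \langle f_a, \psi \rangle = 6\pi R\, a\cdot \fint_{\partial B_i} \psi \, \dd \mathcal{H}^2 = 6\pi R\, a\cdot (\psi)_i.
\]
Specialising to $\psi = w - P_i w$ and invoking orthogonality, I obtain $a\cdot[(w)_i - (P_i w)_i] = 0$ for every $a \in \IR^3$. The trace of the constant $c_w$ on $\partial B_i$ from the interior is simply $c_w$, so $(P_i w)_i = c_w$, and hence $c_w = (w)_i$.

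The one place that requires genuine care is the distributional pairing with $f_a$, which is a measure concentrated on $\partial B_i$: this has to be justified by the fact that $\Hdot_\sigma(\IR^3)$ functions possess well-defined $L^2$-traces on $\partial B_i$, so that $\langle f_a, \psi\rangle$ is a genuine surface integral, together with the explicit Oseen-tensor representation of $\phi_a$ already recorded around \eqref{eq:2Over9}, which makes the integration by parts used in the weak formulation unambiguous. Everything else is bookkeeping.
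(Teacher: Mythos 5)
Your proof is correct and matches the paper's argument essentially verbatim: both test the orthogonality relation $\langle w - P_i w, \cdot\rangle_{\Hdot} = 0$ against the translating-rigid-sphere Stokes flow (your $\phi_a$, the paper's $\psi$), use that its force density is the uniform surface measure $\tfrac{3}{2R}a\,\delta_{\partial B_i} = \tfrac{6\pi R}{|\partial B_i|}a\,\delta_{\partial B_i}$, and let $a$ range over $\IR^3$ to conclude $\fint_{\partial B_i}(w - P_i w)=0$.
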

\begin{proof}
	Let $\psi_0 \in \IR^3$ and define $ \psi \in \Hdot_\sigma(\IR^3)$ to be the solution to
	\begin{align}
		-\Delta \psi + \nabla p &= 0 \quad \text{in} ~ \IR^3 \backslash \overline{B_i}, \\
		\psi &= \psi_0 \quad \text{in} ~  B_i.
	\end{align}
	In other words, $\psi$ is the velocity field corresponding to a moving single sphere without external forces.
	Hence, as it is well known, 
	\[
	-\Delta \psi + \nabla p	=  \frac{3}{2R} \psi_0 \delta_{\partial B_i}.
	\]
	Furthermore, $\psi \in W_i$, and hence,
	\[
		0 = (w - P_i w, \psi)_{ \Hdot(\IR^3)} = \langle w - P_i w, -\Delta \psi \rangle 
		= \frac{3}{2R} \psi_0 \cdot \int_{\partial B_i} w - P_i w \dd \mathcal{H}^2.
	\]
	Since $\psi_0$ was arbitrary, we deduce 
	\[
		\int_{\partial B_i} w - P_i w \dd \mathcal{H}^2 = 0,
	\]
	and the assertion follows.
\end{proof}

\begin{lemma}
	\label{lem:pCorrelationest}
	Assume $ f \in \dot{H}^{-1}_\sigma(\IR^3) $ is supported in 
	$ \overline{B_i}$ and $\int_{\IR^3} f = 0$, i.e., $S f \in W_i^\perp$, where $S$ is the solution operator for the Stokes equations. Then, for all
	 $x \in \IR^3 \backslash B_{2R}(X_i)$,
	\begin{equation}
		\label{eq:dipoleEstimate}
		|(S f)(x)| \leq C \frac{R^\frac{3}{2}}{|x-X_i|^2} \|f\|_{\dot{H}^{-1}(\IR^3)},
	\end{equation}
	and
	\begin{equation}
		\label{eq:dipoleEstimateGradient}
		|\nabla (S f)(x)| \leq C \frac{R^\frac{3}{2}}{|x-X_i|^3} \|f\|_{\dot{H}^{-1}(\IR^3)}.
	\end{equation}
\end{lemma}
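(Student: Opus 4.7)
The plan is to write $v := Sf$ pointwise via the Oseen tensor \eqref{eq:Oseen} and extract the additional power of $|x - X_i|^{-1}$ characteristic of a dipole. By density one may assume $f$ smooth, so that
\[
v(x) = \int \Phi(x - y) f(y) \dd y = \int [\Phi(x-y) - \Phi(x-X_i)] f(y) \dd y,
\]
where the mean-zero condition $\int f = 0$ was used. The task is then to realise the second integral as the $\dot{H}^{-1}/\Hdot_\sigma$ pairing against a test function whose $\Hdot$-norm is of order $R^{3/2}/|x-X_i|^2$.

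Pick $\eta \in C_c^\infty(B_{3R/2}(X_i))$ with $\eta \equiv 1$ on $B_{5R/4}(X_i)$ and $|\nabla \eta| \leq C/R$, and set $\phi_x(y) := [\Phi(x-y) - \Phi(x-X_i)]\eta(y) \in \Hdot(\IR^3)$. Since each column of $\Phi(x - \cdot)$ is divergence-free in $y$ away from $x$, $\dv \phi_x = [\Phi(x-\cdot) - \Phi(x-X_i)] \cdot \nabla \eta$ is supported in the annulus $A := B_{3R/2}(X_i) \setminus \overline{B_{5R/4}(X_i)}$, and $\int_A \dv \phi_x = 0$ by the divergence theorem applied to $\Phi(x-\cdot)$ on $B_{5R/4}(X_i)$. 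Lemma \ref{lem:divergenceSolution} then yields $w \in H^1_0(A)$ with $\dv w = \dv \phi_x$ and $\|\nabla w\|_{L^2} \leq C \|\dv \phi_x\|_{L^2}$, and $\tilde \phi_x := \phi_x - w \in \Hdot_\sigma(\IR^3)$. Because $\eta \equiv 1$ on $\overline{B_i}$, $w$ vanishes on $\overline{B_i}$, and $\int f = 0$, the pairing recovers $\langle f, \tilde\phi_x \rangle = v(x)$.

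It remains to bound $\|\nabla \tilde\phi_x\|_{L^2}$. For $|x - X_i| \geq 2R$ and $y \in B_{3R/2}(X_i)$ one has $|x - y| \sim |x - X_i|$, so $|\nabla \Phi(x-y)| \leq C|x - X_i|^{-2}$ and, by the mean value theorem, $|\Phi(x-y) - \Phi(x-X_i)| \leq CR|x-X_i|^{-2}$. A direct computation using $|\nabla \eta| \leq C/R$ and $|B_{3R/2}(X_i)| \sim R^3$ gives $\|\nabla \phi_x\|_{L^2} + \|\dv \phi_x\|_{L^2} \leq CR^{3/2}/|x-X_i|^2$, whence $\|\nabla \tilde\phi_x\|_{L^2} \leq CR^{3/2}/|x-X_i|^2$, and \eqref{eq:dipoleEstimate} follows from the duality $|v(x)| = |\langle f, \tilde\phi_x\rangle| \leq \|f\|_{\dot{H}^{-1}(\IR^3)} \|\nabla \tilde\phi_x\|_{L^2}$.

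For \eqref{eq:dipoleEstimateGradient}, $v$ is smooth near $x$, so I may differentiate inside the convolution and repeat the construction with $\partial_{x_k}\Phi$ in place of $\Phi$; one extra derivative on the Oseen tensor costs an extra $|x-X_i|^{-1}$, which produces the claimed $|x - X_i|^{-3}$ decay. The main obstacle is the Bogovskii correction: the natural test function $\phi_x$ lies only in $\Hdot(\IR^3)$, not in $\Hdot_\sigma(\IR^3)$, and the correction $w$ must be constructed so that (i) it does not affect the pairing, which is why it is supported in $A$, away from $\supp f$, and (ii) its gradient has the same order of magnitude as $\nabla \phi_x$, which is precisely the content of Lemma \ref{lem:divergenceSolution}.
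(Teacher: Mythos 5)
Your argument is correct and rests on the same core idea as the paper's: use $\int f = 0$ to subtract a constant from the Oseen tensor, then realize $(Sf)(x)$ as a duality pairing $\langle f, \tilde\phi_x\rangle$ against a compactly supported, divergence-free test function that coincides with $\Phi(x-\cdot)-\mathrm{const}$ on $\supp f$ and has $\Hdot$-norm of order $R^{3/2}/|x-X_i|^2$. Where you differ is in how that test function is manufactured. The paper subtracts the \emph{mean} $(\Phi)_{x-X_i,R}$ so that $\Phi-(\Phi)_{x-X_i,R}$ lies in the zero-mean space $H_R$ of Lemma \ref{lem:extest}, and then invokes that lemma directly to produce a divergence-free extension supported in $B_{2R}$ with the desired norm bound. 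You instead subtract the \emph{center value} $\Phi(x-X_i)$, cut off explicitly with $\eta$, and then restore incompressibility with a Bogovskii correction via Lemma \ref{lem:divergenceSolution} supported in the annulus $A$ disjoint from $\overline{B_i}$ (the zero-integral hypothesis holds because $\dv \phi_x$ is a total divergence of a compactly supported field). This is a perfectly valid alternative: it avoids having to match the mean but requires the extra observation that the correction can be localized away from $\supp f$ so as not to disturb the pairing, and that the Bogovskii constant is $R$-independent by scaling. In substance the two constructions are equivalent --- the proof of Lemma \ref{lem:extest} itself amounts to a cutoff plus a Bogovskii-type correction --- so what you have done is essentially unfold the paper's black-box extension lemma into its elementary ingredients, at the mild cost of also having to estimate $|\Phi(x-y)-\Phi(x-X_i)|$ by the mean value theorem. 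The gradient estimate by repeating the argument with $\partial_{x_k}\Phi$ (still divergence-free in $y$, with one extra power of decay) is exactly what the paper does as well.
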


\begin{proof}	

	We denote again by $\Phi$ the Oseen tensor \eqref{eq:Oseen}. Then, 
	\begin{equation}
	\begin{aligned}
		|(S f) (x)| &= |(\Phi \ast f)(x)| = |((\Phi - (\Phi)_{x -X_i,2R}) \ast f)(z)| \\ 
		&= |(E(\Phi - (\Phi)_{X_i -X_j,2R}) \ast f)(z)| \\
		&\leq \|f\|_{\dot{H}^{-1}(\IR^3)} \|(E(\Phi - (\Phi)_{x-X_i,2R}) \|_{\dot{H}^1(\IR^3)},
	\end{aligned}
	\label{eq:pPtw1}
	\end{equation}
	where 
	\[
		(\Phi)_{x -X_i,R} = \fint_{B_{R}(x -X_i)} \Phi(y) \dd y,
	\]
	and $E(\Phi - (\Phi)_{x -X_i,R})$ is any divergence free extension of the restriction of $\Phi - (\Phi)_{x -X_i,R}$ to 
	$B_{R}(x -X_i)$.
	By Lemma \ref{lem:extest}, we can choose this extension in such a way that 
	\[
		\|(E(\Phi - (\Phi)_{x -X_i,R}) \|_{\dot{H}^1(\IR^3)} \leq C \|\nabla \Phi \|_{L^2{B_{R}(x -X_i)}} 
		= C \frac{R^\frac{3}{2}}{|x -X_i|^2}.	
	\]
	This establishes estimate \eqref{eq:dipoleEstimate}. Estimate \eqref{eq:dipoleEstimateGradient} is proven analogously.
\end{proof}

\begin{lemma}
	\label{lem:pointsOutsideParticles}
	Let $y \in \IR^3$ and let $X_i$ be a particle that has minimal distance to $y$, i.e.,
	\[
		|y-X_i| \leq |y - X_j| \qquad \text{for all} \quad 1 \leq j \leq N. 
	\]
	Then,
	\[
		|y - X_j| \geq \frac{1}{2} |X_i-X_j| \qquad \text{for all} \quad 1 \leq j \leq N.
	\]
	In particular, for $k = 1,2$,
	\[
		\sum_{j \neq i} \frac{1}{|y - X_j|^k} \leq C \sum_{j \neq i} \frac{1}{|X_i - X_j|^k}.
	\]
\end{lemma}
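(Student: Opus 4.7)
The proof is essentially an application of the triangle inequality, so I expect no substantive obstacle.

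The plan is to verify the pointwise estimate $|y - X_j| \geq \frac{1}{2}|X_i - X_j|$ directly, then sum. First I would invoke the triangle inequality to write
\[
    |X_i - X_j| \leq |X_i - y| + |y - X_j|.
\]
By the minimality hypothesis, $|y - X_i| \leq |y - X_j|$, so the right-hand side is bounded above by $2|y - X_j|$. Dividing by $2$ yields $|y - X_j| \geq \frac{1}{2}|X_i - X_j|$, which is the first conclusion and holds for every $j$ (including $j = i$, trivially).

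For the sum estimate, I would simply raise this pointwise bound to the $k$-th power to obtain
\[
    \frac{1}{|y - X_j|^k} \leq \frac{2^k}{|X_i - X_j|^k} \qquad \text{for} ~ j \neq i,
\]
and sum over $j \neq i$, which gives the stated bound with $C = 2^k$. Since $k \in \{1, 2\}$ the constant is absolute, and in fact the argument works for any $k > 0$.

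The only subtlety worth noting is that the minimizing particle $X_i$ need not be unique, but the argument only uses the inequality $|y - X_i| \leq |y - X_j|$, which holds for any such choice. There is no genuine obstacle here; the lemma is purely geometric and does not rely on the separation condition \ref{cond:particlesSeparated} or on any property of the Stokes system.
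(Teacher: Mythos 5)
Your proof is correct and in fact cleaner than the paper's. The paper proves the pointwise bound by a two-case split on whether $|X_j - X_i| \leq 2|y - X_i|$ or not, using minimality in one case and the triangle inequality in the other; your one-line argument combines the two ingredients at once ($|X_i - X_j| \leq |X_i - y| + |y - X_j| \leq 2|y - X_j|$) and avoids the case distinction entirely. Same elementary geometry, more economical organization; the summation step and the observation that no separation hypothesis is needed are both correct.
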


\begin{proof}
	We consider two cases.
	
	\emph{Case 1.} 
	\[
		|X_j - X_i| \leq 2 |y - X_i|.
	\]
	Then,
	\[
		|y - X_j| \geq |y - X_i| \geq \frac{1}{2} |X_i-X_j|.
	\]
	
	\emph{Case 2.} 
	\[
		|X_j - X_i| \geq 2 |y - X_i|.
	\]
	Then,
	\[
		|X_j -X_i| \leq |y - X_i| + |y - X_j| \leq \frac{1}{2}|X_j - X_i| + |y - X_j|,
	\]
	and the assertion follows.
\end{proof}

The proof of the following maximum modulus estimate for solutions to Dirichlet problems of the Stokes equations
can be found in \cite{MRS99}.

\begin{lemma}
\label{lem:MaxModEstimate}
	Let $\Omega \subset \IR^3$ be a bounded or exterior domain and assume that
	 $g \in L^\infty(\IR^3)$ satisfies 
	 \[
	 	\int_{S} g \cdot \nu = 0
	 \]
	 for every connected component $S \subset \partial \Omega$.
	 Then, the unique solution $ u \in \Hdot(\IR^3)$ of the Dirichlet problem
	 \begin{align}
	 	-\Delta u + \nabla p &= 0 \quad \text{in} ~ \Omega, \\
	 	u &= g \quad \text{on} ~ \partial \Omega
	 \end{align}
	 satisfies
	 \[
	 	\| u \|_{L^\infty(\Omega)} \leq C \| g \|_{L^\infty(\partial \Omega)},
	 \]
	 where the constant $C$ depends only on $\Omega$.
\end{lemma}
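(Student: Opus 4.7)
The plan is to use boundary integral methods for the Stokes system. Represent $u$ as a hydrodynamic double layer potential
\[
u(x)=\int_{\partial\Omega} K(x,y)\phi(y)\,\mathrm{d}\mathcal{H}^2(y),
\]
where the matrix kernel $K$ is constructed from the stress tensor associated to the Oseen fundamental solution $\Phi$ from \eqref{eq:Oseen}, and $\phi\in L^\infty(\partial\Omega)$ is an unknown density. The classical jump relations of the Stokes double layer reduce the Dirichlet problem to the boundary integral equation $(\tfrac12 I+K^\ast)\phi=g$ on $\partial\Omega$, where $K^\ast$ denotes the principal-value double layer operator. The compatibility condition $\int_S g\cdot\nu=0$ on each connected component $S$ is precisely what guarantees that $g$ lies in the range of $\tfrac12 I+K^\ast$, since the cokernel of this operator is spanned by the normals $\nu|_S$.

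The proof then splits into two steps. In Step (i), I would establish that $\tfrac12 I+K^\ast$ is invertible on $L^\infty(\partial\Omega)$ modulo its finite-dimensional kernel, so that $\|\phi\|_{L^\infty(\partial\Omega)}\leq C\|g\|_{L^\infty(\partial\Omega)}$. For smooth boundaries this combines Fredholm theory with the $L^p$-solvability of the Stokes boundary problem and a bootstrap up to $L^\infty$. In Step (ii), I would prove the pointwise bound $\|\mathcal{D}[\phi]\|_{L^\infty(\Omega)}\leq C\|\phi\|_{L^\infty(\partial\Omega)}$ for the double layer operator $\mathcal{D}$. Away from $\partial\Omega$ this is immediate from the pointwise bound $|K(x,y)|\leq C|x-y|^{-2}$ and interior regularity for Stokes. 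Near $\partial\Omega$ the kernel has a non-integrable singularity of order $|x-y|^{-2}$ on the two-dimensional surface, which is critical for $L^\infty\to L^\infty$ mapping; the cancellation encoded in the tangential Calder\'on--Zygmund structure of $K$ must be exploited.

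For exterior $\Omega$, the decay $u(x)\to 0$ as $|x|\to\infty$ is automatic: after using the kernel freedom to normalize $\phi$ to have zero mean on each component, the monopole term of $\mathcal{D}[\phi]$ vanishes and the residual is dipolar, controlled in the spirit of Lemma \ref{lem:pCorrelationest}. The main obstacle is Step (ii), because the Stokes double layer kernel is borderline for $L^\infty$-boundedness and naive pointwise estimation of $K$ fails. A fully self-contained argument in the style of Maz'ya--Plamenevskii would localize via a smooth partition of unity, rescale to unit size, treat a finite catalogue of model half-space problems for which explicit Green's matrices are available, and sum the local contributions using finiteness of overlaps; this is essentially the content carried out in \cite{MRS99}.
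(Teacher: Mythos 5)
The paper does not prove this lemma; it simply remarks that a proof ``can be found in \cite{MRS99}''. Your outline sketches the layer-potential approach that such a reference typically uses, and you explicitly acknowledge that Step~(ii) is ``essentially the content carried out in \cite{MRS99}''. In that sense you are proposing the same route as the paper, just with more detail spelled out in the easy steps; the hard step is still deferred. The crux of the theorem is the $L^\infty(\partial\Omega)\to L^\infty(\Omega)$ boundedness of the Stokes double layer, and your proposal does not carry this out --- it only gestures at localization, rescaling, and half-space model problems. You are right that this is the genuine obstacle: unlike the Laplace case, the Stokes double layer kernel is not simply a normal derivative of the fundamental solution, so the classical potential-theoretic cancellation has to be recovered more carefully, and the naive kernel bound $|K(x,y)|\lesssim|x-y|^{-2}$ on a two-dimensional surface is exactly non-integrable. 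So what you have written is a correct high-level plan but not a proof.

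Two smaller notes. First, a sign/notation issue: the boundary integral equation for a Dirichlet problem represented by a double layer is $(\pm\tfrac12 I + K)\phi=g$ with $K$ the double layer boundary operator (the principal-value trace of $\mathcal D$); writing $K^\ast$ invites confusion with the adjoint operator that appears in the traction problem for the single layer, and it obscures which operator's cokernel is being described by the normal fields. Second, and more usefully for this paper: the remark immediately following the lemma observes that it is only ever applied with $\Omega$ the exterior of a single ball. For that geometry one does not need the general theory at all --- the exterior Dirichlet problem on a sphere admits an explicit solution (Lamb's general solution, or equivalently an expansion in vector spherical harmonics), and the Poisson-type kernel for the sphere is computable, so the maximum modulus bound can be read off directly with a constant that is uniform in the radius by scaling. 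If the goal were a self-contained proof of what is actually used, that is the cleanest path; for the general statement, citing \cite{MRS99} as the paper does is the pragmatic choice.
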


\begin{remark}
	Clearly, the constant $C$ in the above statement is invariant under scaling of the domain.
	 In fact, we will only apply the above lemma for $\Omega$ being the exterior of a ball.
\end{remark}

\begin{proposition}
	\label{pro:LInftyEstimateSeries}
	We define
	\[
		u_k  = \left(1 - \sum_i Q_i \right)^k u.
	\]
	Then, for all particles $j$ and all $y \not \in B_{2R}(X_j)$,
		\begin{equation}
		\label{eq:dipoleDecay2}
		| Q_j u_k(y) | 
		\leq C  \frac{R^3}{|X_j - y|^2} \| \nabla u_{k} \|_{L^\infty(B_j)},
	\end{equation}
	and
	\begin{equation}
		\label{eq:dipoleDecay1}
		| \nabla Q_j u_k(y) | 
		\leq C  \frac{R^3}{|X_j - y|^3} \| \nabla u_{k} \|_{L^\infty(B_j)}.
	\end{equation}

	Furthermore, there exists a constant $\delta > 0$ with the following property.
	Assume that
	\[
		\sup_j \frac{\phi}{N} \sum_{i \neq j} \frac {1}{|X_i - X_j|^3} \leq \delta,
	\] 
	and assume there is some constant $\alpha < \infty$ such that
	\begin{equation}
		\label{eq:alpha}
		\sup_j\frac{1}{N}  \sum_{i \neq j} \frac {1}{|X_i - X_j|^2} \leq \alpha.
	\end{equation}
	Then,
	\begin{equation}
		\label{eq:NablaUTilde}
		\| \nabla u_k \|_{L^\infty(B_i)}  \leq  \alpha (C \delta)^k,
	\end{equation}
	and
	\[
		u_k \to v \quad \text{in}~ \Hdot(\IR^3).
	\]
	Moreover, the convergence also holds in $L^\infty(\IR^3)$, and we have 
	\begin{equation}
		\label{eq:uVLInfty}
		\|u-v\|_{L^\infty(\IR^3)} \leq C \alpha (\alpha \phi +  R).
	\end{equation}
\end{proposition}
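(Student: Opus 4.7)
The plan is to first establish the dipole decay \eqref{eq:dipoleDecay2}--\eqref{eq:dipoleDecay1}, then prove the gradient bound \eqref{eq:NablaUTilde} by induction on $k$, and finally convert these into an $L^\infty$ Cauchy argument that identifies the limit of the $u_k$ with $v$ and supplies \eqref{eq:uVLInfty}. For the dipole decay, note that $Q_j u_k \in W_j^\perp$, so by \eqref{eq:characterizationWPerp} it is the Stokes field of a force $f$ supported in $\overline{B_j}$ with $\int f = 0$. Lemma \ref{lem:pCorrelationest} then provides the required $|y-X_j|^{-2}$ and $|y-X_j|^{-3}$ decay in terms of $\|f\|_{\dot{H}^{-1}} = \|Q_j u_k\|_{\Hdot}$. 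To produce the remaining factor $R^{3/2}$, observe that $P_j u_k$ equals the constant $(u_k)_j$ inside $B_j$ by Lemma \ref{lem:projectionInsideParticle}; the remark after Lemma \ref{lem:extest} supplies an $H^1_{0,\sigma}(B_{2R}(X_j))$-extension of $u_k - (u_k)_j|_{B_j}$, and subtracting this extension from $u_k$ gives a competitor in $W_j$, so the minimality of the orthogonal projection yields $\|Q_j u_k\|_{\Hdot} \leq C\|\nabla u_k\|_{L^2(B_j)} \leq CR^{3/2}\|\nabla u_k\|_{L^\infty(B_j)}$.

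For \eqref{eq:NablaUTilde}, the base case $k=0$ is Lemma \ref{lem:nablaU} combined with \eqref{eq:alpha}. The crucial cancellation in the inductive step is that inside $B_i$ the function $P_i u_k$ is constant, so $\nabla Q_i u_k = \nabla u_k$ there; subtracting $\sum_j \nabla Q_j u_k$ from $\nabla u_k$ kills the diagonal term and leaves
\[
 \nabla u_{k+1}|_{B_i} = -\sum_{j\neq i} \nabla Q_j u_k.
\]
Applying \eqref{eq:dipoleDecay1} at $y \in B_i$ together with $|y-X_j| \geq \tfrac{1}{2}|X_i-X_j|$ (Lemma \ref{lem:pointsOutsideParticles}) and the inductive hypothesis gives $\|\nabla u_{k+1}\|_{L^\infty(B_i)} \leq C\alpha(C\delta)^k \cdot R^3 \sum_{j\neq i}|X_i-X_j|^{-3} \leq C\alpha(C\delta)^{k+1}$, by the assumed smallness of the cubic sum, which closes the induction.

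For the convergence I extend the bound $|Q_j u_k| \leq CR\|\nabla u_k\|_{L^\infty(B_j)}$ on $\partial B_j$ to the whole of $\IR^3 \setminus B_j$ via the maximum modulus estimate (Lemma \ref{lem:MaxModEstimate}). At a generic $y \in \IR^3$ I split $\sum_j Q_j u_k(y)$ according to the particle $i$ closest to $y$: the self-term $Q_i u_k(y)$ is controlled by this global $L^\infty$ bound, while for $j \neq i$ the dipole decay \eqref{eq:dipoleDecay2} and Lemma \ref{lem:pointsOutsideParticles} apply. Summing using \eqref{eq:alpha} and the inductive bound yields $\|u_{k+1}-u_k\|_{L^\infty(\IR^3)} \leq C\alpha(R + \alpha\phi)(C\delta)^k$, so that $u_k$ is Cauchy in $L^\infty$ and \eqref{eq:uVLInfty} follows by summing the geometric series. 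The limit $u_\infty$ must lie in $W$ because $\|\nabla u_k\|_{L^\infty(B_i)}\to 0$, and each $u_k$ satisfies the integrated force identity \eqref{eq:inertialess} since the $Q_j$ act by dipoles; passing to the limit shows $u_\infty$ solves the constrained Stokes problem characterizing $v$, so $u_\infty = v$. The $\Hdot$-convergence then follows by the same telescoping argument, summing $\|Q_j u_k\|_{\Hdot}^2 \leq CR^3\alpha^2(C\delta)^{2k}$ over $j$ and $k$.

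The main obstacle is to control the aggregate corrector $\sum_j Q_j u_k$ without paying a factor of $N$: a naive $L^\infty$ summation over all particles would give a prefactor of order $NR = \xi^{-2}$, which is not small. Overcoming this requires the dipole decay \eqref{eq:dipoleDecay2}--\eqref{eq:dipoleDecay1} together with the particle separation, so that the sums against $|X_i-X_j|^{-2}$ and $|X_i-X_j|^{-3}$ are controlled by $\alpha$ and $\delta$ respectively (cf.\ Lemma \ref{lem:brutalEstimatesSums}). This is precisely where assumption \ref{cond:phiLogN} enters, and it is why the method converges here under milder hypotheses than in the Dirichlet setting of \cite{HV16}.
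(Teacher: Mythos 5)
Your argument for the dipole decay estimates, the inductive gradient bound \eqref{eq:NablaUTilde}, and the $L^\infty$-telescoping leading to \eqref{eq:uVLInfty} follows the paper's proof essentially line by line and is correct.

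Where you diverge is the final step: proving $u_k \to v$ in $\Hdot$ and identifying the limit. The paper's route is to observe that $v$ remains the orthogonal projection of $u_k$ onto $W$ for \emph{every} $k$ (since $Q_j w \in W_j^\perp \subset W^\perp$ for all $w$), so one can directly bound $\|v - u_k\|_{\Hdot}$ by choosing a competitor $w \in W$ as in Proposition \ref{pro:goodApproximation}: the disjointly supported correctors $w_i \in H^1_{0,\sigma}(B_{2R}(X_i))$ are genuinely orthogonal in $\Hdot$, giving $\|v-u_k\|^2_{\Hdot} \leq C\sum_i R^3\|\nabla u_k\|^2_{L^\infty(B_i)}$ in one shot, together with the identification of the limit for free. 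You instead telescope $\|u_{k+1}-u_k\|_{\Hdot} = \|\sum_j Q_j u_k\|_{\Hdot}$ and then argue separately that $u_\infty = v$. The telescoping step has a gap: you write the bound as if $\sum_j \|Q_j u_k\|^2_{\Hdot}$ controls $\|\sum_j Q_j u_k\|^2_{\Hdot}$, but the $Q_j u_k$ for different $j$ are \emph{not} mutually orthogonal in $\Hdot$ — $Q_j u_k$ lies in $W_j^\perp$, but it is not in $W_i^\perp$ or in $W_i$ for $i \neq j$. Without this, a crude triangle inequality over $j$ would cost a factor of $N$, which destroys the estimate. One can salvage it via an almost-orthogonality computation (the cross terms $\langle Q_i u_k, Q_j u_k\rangle_{\Hdot}$ can be shown to be of order $\delta$ times the diagonal using the dipole decay and Lemma \ref{lem:extest}), but you did not supply this. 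Similarly, passing to the limit in the integrated force identity to conclude $u_\infty = v$ requires either a weak formulation tested against functions constant on each $B_i$ (which again presupposes $\Hdot$-convergence) or additional boundary regularity — as written the step is circular or incomplete. Replacing this whole block by the paper's observation $v = P_W u_k$ for all $k$ closes both gaps with no extra work.
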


\begin{proof}
By Lemma \ref{lem:pCorrelationest}, we have for all particles $j$ and all $y \not \in B_{2R}(X_j)$
	\begin{equation}
		\label{eq:seriesConvergent2}
		| Q_j u_{k}(y) | 
		\leq C \frac{R^{\frac{3}{2}}}{|X_j - y|^2} \| Q_j u_{k} \|_{\Hdot}.
	\end{equation}
	By \eqref{eq:characterizationWPerp}, $\supp \Delta Q_j u_{k} \subset \overline{B_i}$ as a function in $\dot{H}^{-1}_\sigma(\IR^3)$.
	Therefore, $Q_j u_{k-1}$ is the function of minimal norm in $\Hdot_\sigma(\IR^3)$ that coincides with
	$Q u_{k-1}$ in $\overline{B_i}$. Using Lemma \ref{lem:projectionInsideParticle} and Lemma \ref{lem:extest}, we deduce
	\begin{equation}
		\label{eq:seriesConvergent3}
		\| Q_j u_{k} \|_{\Hdot} \leq C \| \nabla u_{k} \|_{L^2(B_j)} 
		\leq C R^{\frac{3}{2}} \| \nabla u_{k} \|_{L^\infty(B_j)}.
	\end{equation}
	Combining \eqref{eq:seriesConvergent2} and \eqref{eq:seriesConvergent3} yields \eqref{eq:dipoleDecay1}.
	Estimate \eqref{eq:dipoleDecay2} is proven analogously.

	We claim that $v$ is orthogonal projection  of $u_k$ to $W$ for all $k \in \IN$.
	In Section \ref{sec:L^2Estimates}, we have seen that $v$ is the orthogonal projection
	of $u$ to $W$. Therefore, it suffices to observe 
	that $Q_i w$ lies in the orthogonal complement of $W$ for any $w \in \Hdot_\sigma(\IR^3)$.
	By definition, $Q_i w$ lies in the orthogonal complement of $W_i$. Since $W \subset W_i$ this implies $Q_i w \in W^\perp$.
	
	Now it follows analogously as we have obtained \eqref{eq:projectionEstimate}
	\begin{equation}
	\label{eq:seriesConvergent}
		\| v - u_k \|^2_{\Hdot(\IR^3)} \leq \sum_i C \| \nabla u_k \|^2_{L^2(B_i)} 
		\leq \sum_i C  R^3 \| \nabla u_{k} \|^2_{L^\infty(B_i)}.
	\end{equation}
	In $B_i$, we have for $k \geq 1$
	\begin{equation}
		\label{eq:seriesConvergent1}
		 \nabla u_k  = \nabla(u_{k-1} - \sum_j Q_j  u_{k-1}) = \sum_{j \neq i} \nabla Q_j u_{k-1}
	\end{equation}
	since $ u_{k-1} - Q_i u_{k-1} = P_i u_{k-1} \in W_i$ is constant in $B_i$.
	
	Thus, 
	\begin{equation}
	\begin{aligned}
		\| \nabla u_k \|_{L^\infty(B_i)} &\leq C \sum_{j \neq i} \frac{R^3}{|X_i - X_j|^3} \| \nabla u_{k-1} \|_{L^\infty(B_j)} \\
		& \leq C \delta \| \nabla u_{k-1} \|_{L^\infty(\cup B_j)}.
	\end{aligned}
	\label{eq:Linftykey}
	\end{equation}
	By Lemma \ref{lem:nablaU} and estimate \eqref{eq:alpha},
	\[
		\| \nabla u \|_{L^\infty(\cup B_j)} \leq C \alpha.
	\]
	 Combining this with \eqref{eq:Linftykey} yields \eqref{eq:NablaUTilde}. 
	Hence, by the estimate \eqref{eq:seriesConvergent}, the series $u_k$ converges to $v$ in 	
	$\dot{H}^1(\IR^3)$, provided $\delta < 1/C$.
	
	To prove convergence in $L^\infty(\IR^3)$ we choose for any fixed $x \in \IR^3$  a particle $X_i$
	which has minimal distance to $x$. 
	We note that Lemma \ref{lem:pointsOutsideParticles} implies
	\[
		\sum_{j \neq i} \frac{1}{|x - X_j|^2} \leq C \alpha.
	\] 
	Application of Lemma \ref{lem:projectionInsideParticle} 
	and Lemma \ref{lem:MaxModEstimate} for particle $i$, and  Lemma \ref{lem:pCorrelationest} for the others,
	using also \eqref{eq:seriesConvergent3}, yields
	\begin{equation}
	\label{eq:LInftyEstimateInParticle}
	\begin{aligned}
		| u_{k+1}(x) - u_k(x)| &= |\sum_j Q_j u_k(x)| \\
		&\leq \sum_{j \neq i}|  Q_j u_k(x) |  + | Q_i u_k(x)| \\
		&\leq \sum_{j \neq i} C \frac{R^\frac{3}{2}}{(x-X_i)^2} \| Q_j u_k \|_{\dot{H}^{1}(\IR^3)}  + C \| u_k - (u_k)_i \|_{L^\infty(B_i)} \\
		&\leq C \alpha R^3 N \| \nabla u_{k} \|_{L^\infty(\cup B_j)} +  C R \| \nabla u_{k} \|_{L^\infty(B_i)} \\
		&= C \alpha ( \alpha \phi +  R) (C \delta)^k.
	\end{aligned}
	\end{equation}	
	Therefore, $u_k - v$ converges to zero (in the limit $k \to \infty$ for a fixed particle configuration) in $L^\infty(\IR^3)$.

	In order to prove \eqref{eq:uVLInfty}, we use \eqref{eq:Linftykey} and \eqref{eq:LInftyEstimateInParticle} to estimate
	\begin{align}
		\| u - v\|_{L^\infty(\IR^3)} &\leq \sum_{k=0}^\infty \| u_{k+1} - u_k \|_{L^\infty(\IR^3)} \\
		& \leq \sum_{k=0}^\infty C ( \alpha \phi +  R) \| \nabla u_{k} \|_{L^\infty(\cup B_j)} \\
		&\leq C \alpha( \alpha \phi +  R) \sum_{k=0}^\infty(C \delta)^k \leq C \alpha( \alpha \phi +  R). \qedhere
	\end{align}
\end{proof}

\section{Estimates for the particle distances}
\label{sec:collisions}

In order to apply the method of reflections we have seen in Proposition \ref{pro:LInftyEstimateSeries}
that we need to control the terms
	\[
		\sup_j\frac{1}{N}  \sum_{i \neq j} \frac {1}{|X_i - X_j|^k},
	\] 
for $k = 2,3$. To achieve this, we want to estimate the quantity $Y_\eps$ from Definition \ref{def:Y},
in order to apply Lemma \ref{lem:brutalEstimatesSums}. More precisely, we show in Propsition \ref{pro:particlesRemainSeparated} that, starting at time $T_0$, the time $\theta$ needed for two particles to halve their distance is bounded from below. For sufficiently small $\eps$, this bound on $\theta$ depends only on $Y_\eps(T_0)$. Thus, if we have estimates for $Y_\eps(T_0)$ uniformly in $\eps$, we can also bound $Y_\eps(T_0 + \theta)$ uniformly in $\eps$. 

This a priori estimate enables us to prove the main theorem for small times 
(cf. Theorem \ref{th:convergenceMacro}). However, it does not rule out that $Y_\eps$ blows up in finite time.
Therefore, we prove an a posteriori estimate on $Y_\eps$ in Section \ref{sec:ConvergenceArbitraryTimes}.

In order to control the particle distances, we need to estimate their relative velocities, which are 
provided by Lipschitz type estimates for $v_\eps$. First, in Lemma \ref{lem:OszillationsOfVelocity}, we prove such an estimate for the approximated
fluid velocity $u_\eps$. Then, in Lemma \ref{lem:OszillationsOfVelocityApproximation}, we again use the method of reflections to get the estimate for $v_\eps$ as well.

\begin{lemma}
	\label{lem:OszillationsOfVelocity}
	Assume there is some constant $\alpha < \infty$ such that
	\[
		\sup_j\frac{1}{N}  \sum_{i \neq j} \frac {1}{|X_i - X_j|^2} \leq \alpha.
	\] 
	Then, 

	for all particles $i,j$ 
	and all $h \in \overline {B_R(0)} \subset \IR^3$,
	\begin{equation}
		\label{eq:lemOszPart2}
		|u_ (X_i +h) - u_(X_j+j)| \leq C \alpha |X_i-X_j|.
	\end{equation}
\end{lemma}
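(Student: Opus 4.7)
The plan is to decompose $u = \sum_k w_k$ where $w_k = S f_k$ is the single-particle drag flow, and exploit the explicit Stokeslet-plus-doublet representation of $w_k$. By \eqref{eq:2Over9}, $w_k \equiv \tfrac{2}{9}\xi_\eps^2 e$ inside $B_k$, and outside $B_k$ one has the pointwise bounds
\[
|w_k(x)| \leq \frac{C|F|}{|x-X_k|}, \qquad |\nabla w_k(x)| \leq \frac{C|F|}{|x-X_k|^2}, \qquad |x-X_k|\geq R,
\]
with $|F|=C/N$ by \eqref{eq:inertialess}. Since $|h|\leq R$ and by Remark \ref{rem:particlesNotTouching} we may assume $d_{\eps,\min}\geq 4R$, the point $X_i+h$ lies in $\overline{B_i}$ and outside every other ball (analogously for $X_j+h$). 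I split
\[
u(X_i+h)-u(X_j+h)=\underbrace{[w_i(X_i+h)-w_i(X_j+h)] + [w_j(X_i+h)-w_j(X_j+h)]}_{=:A} + \underbrace{\sum_{k\neq i,j}[w_k(X_i+h)-w_k(X_j+h)]}_{=:B}.
\]

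For the self-contribution $A$, constancy of $w_i$ on $B_i$ and of $w_j$ on $B_j$ collapses $A$ to $w_j(X_i+h)-w_i(X_j+h)$, which by the $1/|x-X_k|$ decay is bounded by $C|F|/|X_i-X_j|$. Writing this as $C|X_i-X_j|/(N|X_i-X_j|^2)$ and noting that $|X_i-X_j|^{-2}$ is a single term in $\sum_{k\neq i}|X_i-X_k|^{-2}\leq N\alpha$, one gets $|A|\leq C\alpha|X_i-X_j|$ immediately.

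For the cross-contribution $B$, the goal is to prove, for each $k\neq i,j$, the pointwise estimate $|w_k(X_i+h)-w_k(X_j+h)|\leq C|F||X_i-X_j|/m_k^2$ with $m_k:=\min(|X_i-X_k|,|X_j-X_k|)$. I will argue by a case distinction on $r_k$, the distance from $X_k$ to the segment $[X_i+h,X_j+h]$. If $r_k\geq m_k/2$, the segment stays outside $B_{m_k/4}(X_k)$ and the mean-value theorem together with the gradient bound yields the claim. If $r_k<m_k/2$, the orthogonal projection $\tilde X_k$ of $X_k$ onto the segment lies strictly between the two endpoints, so Pythagoras gives $|X_i+h-\tilde X_k|\geq \tfrac{\sqrt 3}{2}|X_i+h-X_k|$, and since $|X_i+h-\tilde X_k|\leq |X_i-X_j|$ this forces $m_k\leq C|X_i-X_j|$; the $C|F|/m_k$ bound applied at each endpoint then recovers $|w_k(X_i+h)-w_k(X_j+h)|\leq C|F|/m_k\leq C|F||X_i-X_j|/m_k^2$. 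Summing over $k$ and using $m_k^{-2}\leq |X_i-X_k|^{-2}+|X_j-X_k|^{-2}$ together with the hypothesis gives $|B|\leq C|F||X_i-X_j|\cdot 2N\alpha = C\alpha|X_i-X_j|$, concluding the proof.

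The main obstacle is precisely Case~B of the cross term. When the segment connecting the evaluation points passes close to some $X_k$, the gradient of $w_k$ on the segment can blow up like $|F|/R^2$ rather than $|F|/m_k^2$, so integrating the gradient no longer gives the correct decay. Recovering the $m_k^{-2}$ weight from the weaker $m_k^{-1}$ pointwise bound relies on the geometric observation that the segment can only come close to $X_k$ when $m_k$ is already comparable to $|X_i-X_j|$, which is what makes the endpoint estimate $|F|/m_k$ sufficient.
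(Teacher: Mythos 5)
Your proof is correct, and the structure (decompose $u=\sum_k w_k$, handle self-terms via constancy inside $B_i,B_j$, handle cross-terms via the gradient decay of Stokeslets) matches the paper's. The one genuinely different step is the treatment of the cross-contribution $B$: the paper avoids the degeneracy when the connecting path passes near some $X_k$ by replacing the straight segment with a semicircle (Lemma~\ref{lem:semicircleCurve}), which stays at distance $\geq\min\{|x_i-X_k|,|x_j-X_k|\}$ from $X_k$ by construction, so the gradient bound $|\nabla w_k|\lesssim |F|/m_k^2$ applies uniformly along the curve. You instead keep the straight segment and observe that when it does come within $m_k/2$ of $X_k$, Pythagoras forces $m_k\lesssim|X_i-X_j|$, so the crude endpoint bound $|w_k|\lesssim|F|/m_k$ already delivers the needed $|F||X_i-X_j|/m_k^2$. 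Both arguments recover the same $m_k^{-2}$ weight; yours is more elementary and avoids constructing an auxiliary curve, while the paper's semicircle lemma is slightly cleaner to apply and is reused verbatim in Lemma~\ref{lem:OszillationsOfVelocityApproximation} for the $|\nabla Q_l u_k|\lesssim m_k^{-3}$ dipole decay, where the same degeneracy issue recurs. For the self-contribution $A$ the paper exploits the reflection symmetry of $w$ and a mean-value estimate, gaining an extra factor $R$ that it then discards via $R\leq|X_i-X_j|$; your direct use of the $1/|x|$ decay at each endpoint gives the same bound with less machinery. One small bookkeeping point worth making explicit if you wrote this up: you mix $m_k=\min(|X_i-X_k|,|X_j-X_k|)$ (centre-to-centre) with evaluation points $X_i+h$, $X_j+h$; since $|h|\leq R$ and $d_{\eps,\min}\geq 4R$ (Remark~\ref{rem:particlesNotTouching}) these are comparable up to a factor $3/4$, which is what licenses swapping one for the other in the decay estimates.
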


\begin{proof}	
	By definition, we can write
	\[
		u = \sum_i w_i,
	\]
	where $w_i$ are the solutions to
\begin{equation}
\begin{aligned}
	-\Delta w_i + \nabla p &= \frac{F}{\partial B_i|} \delta_{\partial B_i}   \quad \text{in} ~ \IR^3, \\
	\dv w_i &= 0 \quad \text{in} ~ \IR^3.
\end{aligned}
\end{equation}
Then, for $x \not\in B_R(X_i)$, 
\begin{align}
	|\nabla w_i(x)| &\leq C \frac{1}{N|x-X_i|^2}.
\end{align}

We observe that for particles $i$ and $j$ 
\begin{equation}
	\label{eq:sym1}
	 w_i (X_i +h)= w(h) = w_j (X_j+h).
\end{equation}
Moreover, using symmetry of $w$,
\begin{equation}
	\label{eq:sym2}
\begin{aligned}
	|w_j (X_i+h) - w_i (X_j+h)| &= |w(X_i - X_j + h) - w(X_j - X_i + h)| \\
	&= |w(X_i - X_j + h) - w(X_i - X_j - h)| \\
	&\leq C |\nabla w(X_i - X_j)| |h| 	\\
	&\leq C \frac{R}{N |X_i - X_j|^2} \\
	&\leq C \frac{|X_i - X_j|}{N |X_i - X_j|^2}.
\end{aligned}
\end{equation}

Let us denote $x_i = X_i + h$ and $x_j = X_j + h$. Then, \eqref{eq:sym1} and \eqref{eq:sym2} imply
\[
	|u (x_i ) - u(x_j)| \leq \sum_{k \neq i,j} |w_k(x_i) - w_k(x_j)| + C \frac{|X_i - X_j|}{N |X_i - X_j|^2}.
\]

	 For all $k \neq i,j$ we use Lemma \ref{lem:semicircleCurve},
which provides curves $s_k \in C^1([0,1];\IR^3)$ from $x_i$ to $x_j$ such that 
	\[
		|s_k(t) - X_k| \geq \min\{|x_i-X_k|,|x_j-X_k|\},
	\]
	and
	\[
			|\dot{s_k}| \leq C|x_i-x_j|.
	\]
We deduce
\begin{equation}
	\label{eq:lemOsz1}
\begin{aligned}
	|w_k(x_i) - w_k(x_j)| &\leq 
	C \int_0^1 |\nabla w_k (s_k(t))||x_i-x_j| \dd t \\
	& \leq C  \frac{|X_i-X_j|}{N}\left( \frac{1}{|x_i-X_k|^2} + \frac{1}{|x_j-X_k|^2} \right).
\end{aligned}
\end{equation}
Thus, using Lemma \ref{lem:pointsOutsideParticles}, we conclude
\begin{align}
	|u(x) - u(y) | &\leq \sum_{k \neq i,j} |w_k(x_i) - w_k(x_j)| 
	+ C \frac{|X_i - X_j|}{N |X_i - X_j|^2} \\
	& \leq C \sum_{i \neq j,k} \frac{|X_i-X_j|}{N}\left( \frac{1}{|x_i-X_k|^2} + \frac{1}{|x_j-X_k|^2} \right) 
	+ C \frac{|X_i - X_j|}{N |X_i - X_j|^2}\\
	& \leq C \alpha |X_i-X_j|. \qedhere
\end{align}

\end{proof}

\begin{lemma}
	\label{lem:semicircleCurve}
	Let $x,y,z \in \IR^3$ be distinct. Then, there exists a curve $s \in C^1([0,1];\IR^3)$
	with $s(0) = x$, $s(1) = y$, 
	\[
		|s(t) - z| \geq \min\{|x-z|,|y-z|\},
	\]
	and
	\[
			|\dot{s}| \leq C|x-y|.
	\]
\end{lemma}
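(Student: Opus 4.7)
The plan is a direct construction. By translation assume $z=0$, and without loss of generality $|x|\leq|y|$; write $r:=|x|$. If the straight segment $t\mapsto(1-t)x+ty$ stays outside the open ball $B_r(0)$, I use it: the speed is constantly $|x-y|$, and the distance condition holds since the endpoint $x$ itself realizes the minimum $r$.

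Otherwise, I construct a two-piece detour in a plane containing $x$, $y$, and $0$ (with an arbitrary choice of plane in the degenerate collinear case). Let $y':=(r/|y|)\,y$. The first piece is the shorter great-circle arc on $\partial B_r(0)$ from $x$ to $y'$, and the second piece is the radial segment from $y'$ to $y$. Both lie in $\{|w|\geq r\}$, so the distance constraint is satisfied. Let $\theta\in[0,\pi]$ be the angle $\angle x 0 y$. By the law of cosines,
\[
|x-y|^2=r^2+|y|^2-2r|y|\cos\theta\geq 2r^2(1-\cos\theta)=4r^2\sin^2(\theta/2),
\]
hence $r\theta\leq(\pi/2)|x-y|$ using $\sin(\theta/2)\geq\theta/\pi$ for $\theta\in[0,\pi]$. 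The arc therefore has length at most $(\pi/2)|x-y|$, and the radial piece has length $|y|-r\leq|x-y|$ by the triangle inequality. Parametrized at constant speed on $[0,1]$, the total speed is bounded by a constant multiple of $|x-y|$.

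The one small nuisance is $C^1$ regularity at the junction $y'$: the curve above is only piecewise $C^1$. I would fix this by blending the two pieces on a short sub-interval inside the open exterior $\{|w|>r\}$ using a standard bump function, which affects the speed bound only by a multiplicative constant and preserves the distance condition. Degeneracies are handled by convention: when $x,y,0$ are collinear the plane of the detour is arbitrary but any choice works, and when $x=y$ the curve is taken constant.

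The main (minor) obstacle is bookkeeping: most of the work is the elementary inequality $|x-y|\geq 2r\sin(\theta/2)$ linking the chord and the arc, while the smoothing step is routine but must be written carefully so that the blending interval lies strictly outside $B_r(0)$ and does not spoil the $C|x-y|$ bound.
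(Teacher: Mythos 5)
Your construction is genuinely different from the paper's, which simply takes the semicircle with diameter $\overline{xy}$ lying in the plane of $x$, $y$, $z$ on the opposite side of the line through $x$ and $y$ from $z$; a short law-of-cosines computation shows that every point on that semicircle is at distance at least $\min\{|x-z|,|y-z|\}$ from $z$, and since a semicircle is a single smooth arc, $C^1$ regularity is automatic with constant speed $(\pi/2)|x-y|$. Your detour (a great-circle arc on $\partial B_r(z)$ followed by a radial segment) is instead only piecewise smooth, and the estimates you give for it are correct: the chord-arc bound $|x-y|\geq 2r\sin(\theta/2)$, the concavity bound $\sin(\theta/2)\geq\theta/\pi$ on $[0,\pi]$, and the triangle-inequality bound $|y|-r\leq|x-y|$ all check out, giving a total speed $\leq(\pi/2+1)|x-y|$.

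The smoothing step you defer to is, however, a genuine gap rather than a routine finish. Your arc lies on the sphere $\{|w-z|=r\}$, i.e.\ on the boundary of the admissible region, so there is no sub-interval ``inside the open exterior'' on which to blend without leaving the arc. Worse, the symmetric bump-function blend you gesture at actually fails the distance constraint: extending the radial segment backward past $y'$ along its tangent immediately enters the open ball $B_r(z)$, and the convex combination of the two parametrizations then dips strictly inside $B_r(z)$ near the junction (this is visible already at second order in the blending parameter). A working fix is either to blend only forward --- extending the great-circle arc past $y'$ rather than extending the radial segment backward, with a sufficiently short blending window --- or to run the detour on a sphere of radius $r+c|x-y|$ for a small fixed $c>0$, so that both corners sit strictly in the open exterior with room to round them while preserving the $C|x-y|$ speed bound. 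Either repair works, but neither is the ``standard'' argument you describe, and as written the proof does not produce a curve satisfying the stated constraint. (For what it is worth, in the two places the lemma is invoked, Lemma~\ref{lem:OszillationsOfVelocity} and Lemma~\ref{lem:OszillationsOfVelocityApproximation}, the curve is only used to integrate a gradient, so a piecewise-$C^1$ curve would in fact suffice; but the lemma as stated asks for $C^1$, and the paper's semicircle gets that for free.)
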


\begin{proof}
	Assume the points $x,y,z$ are not collinear. Let $E$ be the plane that contains $x$ and $y$ and
	is perpendicular to $z$. Then $\IR^3 \backslash E$ consists of two halfspaces.
	Of those two halfspaces, we denote by $H$ the halfspace that contains $z$.
	Then, we choose $s(t)$ to be a parametrization of the semicircle that is uniquely determined
	by the following three properties:
\begin{enumerate}[label=(\roman*)]
		\item It joins $x$ and $y$.
		
		\item It lies in the same plane as $z$. 
		
		\item The semicircle is disjoint to the halfspace H.
	\end{enumerate}
	If $x,y,z$ are collinear, we just take any semicircle joining $x$ and $y$.
	
	Now, it is easy to check that $s(t)$, if parametrized by constant speed, has all the desired properties.
\end{proof}

Using Proposition \ref{pro:LInftyEstimateSeries}, we could deduce from Lemma \ref{lem:OszillationsOfVelocity} 
\begin{equation}
	\label{eq:brutalOszillationOfVelocity}
	|v_\eps (X_i ) - v_\eps(X_j)| \leq C \alpha |X_i-X_j| + C\alpha( \alpha \phi_\eps + R_\eps),
\end{equation}
provided that the assumptions of both Proposition \ref{pro:LInftyEstimateSeries} and 
Lemma \ref{lem:OszillationsOfVelocity} are satisfied.

The particle volume $\phi_\eps$ on the right hand side of \eqref{eq:brutalOszillationOfVelocity} poses a problem,
since it could be much larger
than the minimal particle distance $d_{\eps,\min}$.
Thus, not even for small times $t$, does estimate \eqref{eq:brutalOszillationOfVelocity} imply any lower estimate 
for $d_{\eps,\min}(t)$ which is uniform in $\eps$.

In order to get rid of $\phi_\eps$ in \eqref{eq:brutalOszillationOfVelocity}, we will prove that the functions 
$u_k$ from Proposition \ref{pro:LInftyEstimateSeries} all satisfy
\[
	\sup_k |u_k (X_i) - u_k(X_j)| \leq C \alpha |X_i-X_j|.
\]

\begin{lemma}
	\label{lem:OszillationsOfVelocityApproximation}
	There is a constant $\delta > 0$ with the following property.
	Assume that
	\[
		\sup_j \frac{\phi}{N} \sum_{i \neq j} \frac {1}{|X_i - X_j|^3} \leq \delta,
	\] 
	and assume there is some constant $\alpha < \infty$ such that
	\[
		\sup_j\frac{1}{N}  \sum_{i \neq j} \frac {1}{|X_i - X_j|^2} \leq \alpha.
	\] 
	Then, the functions $u_k$
	defined in Proposition \ref{pro:LInftyEstimateSeries} satisfy for all particles 
	$i$ and $j$
	\begin{equation}
		\label{eq:OszillationsOfVelocityApproximation}
		|u_k (X_i) - u_k(X_j)| \leq  C \alpha |X_i-X_j|,
	\end{equation}
	In particular,
	\begin{equation}
	\label{eq:OszillationsOfRealVelocity}
		|v(X_i) - v(X_j)| \leq  C \alpha |X_i-X_j|.
	\end{equation}
\end{lemma}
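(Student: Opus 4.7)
The plan is to prove \eqref{eq:OszillationsOfVelocityApproximation} by induction on $k$, with the case $k=0$ being exactly Lemma \ref{lem:OszillationsOfVelocity} (taking $h=0$), and then to pass to the limit $k \to \infty$ using the $L^\infty$-convergence $u_k \to v$ from Proposition \ref{pro:LInftyEstimateSeries}. For the inductive step, I would use the recursion $u_{k+1} = u_k - \sum_\ell Q_\ell u_k$, together with the crucial fact that $P_i u_k$ is constant in $B_i$ equal to $(u_k)_{i}$ (Lemma \ref{lem:projectionInsideParticle}), to write
\[
u_{k+1}(X_i) - u_{k+1}(X_j) = \bigl((u_k)_i - (u_k)_j\bigr) - \sum_{\ell \neq i,j} \bigl(Q_\ell u_k(X_i) - Q_\ell u_k(X_j)\bigr) - Q_j u_k(X_i) + Q_i u_k(X_j).
\]
Then the inductive hypothesis needs to control each of these three contributions by $C \alpha |X_i - X_j|$ times a geometrically decaying factor in $k$.

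For the main sum over $\ell \neq i,j$, the points $X_i, X_j \notin B_R(X_\ell)$, so I can invoke Lemma \ref{lem:semicircleCurve} to get a path from $X_i$ to $X_j$ of length $C|X_i - X_j|$ staying outside $B_R(X_\ell)$, and integrate the dipole gradient estimate \eqref{eq:dipoleDecay1}. Combined with \eqref{eq:NablaUTilde}, this yields
\[
\sum_{\ell \neq i,j} |Q_\ell u_k(X_i) - Q_\ell u_k(X_j)| \leq C|X_i - X_j|\,\alpha (C\delta)^k \cdot R^3 \sup_\ell \sum_{\ell' \neq \ell} \frac{1}{|X_\ell - X_{\ell'}|^3} \leq C \alpha \delta (C\delta)^k |X_i - X_j|,
\]
where the last step uses $R^3 = \phi/N$ and the hypothesis on $\sum 1/|X_i - X_j|^3$. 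For the diagonal terms $\ell = i$ or $\ell = j$, I would bound $|Q_j u_k(X_i)|$ by \eqref{eq:dipoleDecay2} (using $X_i \notin B_{2R}(X_j)$, which is ensured by Remark \ref{rem:particlesNotTouching}), giving $CR^3 |X_i - X_j|^{-2} \|\nabla u_k\|_{L^\infty(B_j)}$. Since $|X_i - X_j| \geq 2R$, we have $R^2/|X_i - X_j|^2 \leq 1$, so this is bounded by $CR \alpha (C\delta)^k \leq C |X_i - X_j| \alpha (C\delta)^k$, and symmetrically for $Q_i u_k(X_j)$. Finally, $|(u_k)_i - (u_k)_j|$ is controlled via
\[
|(u_k)_i - u_k(X_i)| \leq CR \|\nabla u_k\|_{L^\infty(B_i)} \leq CR \alpha (C\delta)^k \leq C|X_i - X_j| \alpha (C\delta)^k,
\]
and the triangle inequality with the inductive hypothesis on $|u_k(X_i) - u_k(X_j)|$.

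Putting everything together yields a recursion of the form
\[
\frac{|u_{k+1}(X_i) - u_{k+1}(X_j)|}{|X_i - X_j|} \leq \frac{|u_k(X_i) - u_k(X_j)|}{|X_i - X_j|} + C\alpha (C\delta)^k,
\]
and summing the geometric series (for $\delta$ small enough) gives the uniform bound $|u_k(X_i) - u_k(X_j)| \leq C\alpha |X_i - X_j|$. Passing to the limit $k \to \infty$ using $u_k \to v$ in $L^\infty(\IR^3)$ yields \eqref{eq:OszillationsOfRealVelocity}.

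The main technical obstacle is the treatment of the diagonal terms, where the naive estimate from Lemma \ref{lem:OszillationsOfVelocity} would produce an additional $\phi$-term (as in the failed bound \eqref{eq:brutalOszillationOfVelocity}). The key insight is that the higher-order corrections $Q_\ell u_k$ are dipole potentials, so even when evaluated inside their own particle $B_\ell$ they are only of order $R \|\nabla u_k\|_{L^\infty(B_\ell)}$ rather than of order $\phi$, and $R \leq |X_i - X_j|$ buys exactly the factor needed to absorb them into $C \alpha |X_i - X_j|$ with a geometric prefactor from \eqref{eq:NablaUTilde}.
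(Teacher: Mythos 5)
Your proof is correct and follows essentially the same route as the paper: the same three-term decomposition of $u_{k+1}(X_i)-u_{k+1}(X_j)$ into the boundary-average difference, the non-diagonal dipole sum, and the two diagonal dipole terms, and the same key tools (semicircle curves, Lemma \ref{lem:pCorrelationest} via \eqref{eq:dipoleDecay1}--\eqref{eq:dipoleDecay2}, and the geometric decay \eqref{eq:NablaUTilde}). The one structural difference is how you treat $|(u_k)_i - (u_k)_j|$: the paper carries the parameter $h\in\overline{B_R(0)}$ through the induction hypothesis (i.e. proves the estimate for $|u_k(X_i+h)-u_k(X_j+h)|$ uniformly in $h$) and then just averages over $h\in\partial B_R(0)$; you instead keep the induction at $h=0$ and absorb the discrepancies $|(u_k)_i-u_k(X_i)|\le CR\|\nabla u_k\|_{L^\infty(B_i)}\le CR\alpha(C\delta)^k\le C|X_i-X_j|\alpha(C\delta)^k$ by triangle inequality. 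Both close cleanly, and your version is marginally simpler in that it avoids the extra parameter in the inductive hypothesis. One small caveat on notation: the bound you want for the non-diagonal sum is really via $R^3\sum_{\ell\neq i}|X_i-X_\ell|^{-3}\le\delta$ (and similarly with $j$), since $R^3=\phi/N$ and the hypothesis is a supremum over the reference particle, not over $\ell$; the chain of inequalities you wrote reaches the right conclusion but the intermediate $\sup_\ell\sum_{\ell'\neq\ell}$ should be read in that sense.
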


\begin{proof}
	The assertion follows from the following estimate which we will prove by induction in $k$.
	\begin{equation}
		\label{eq:OszApprInduction}
		|u_k (X_i + h) - u_k(X_j+h)| 
		\leq C \delta^{-1} \alpha  \sum_{n = 0}^k (C \delta)^n |X_i-X_j|,
	\end{equation}
	for all particles $i,j$ and all $h \in \overline{B_R(0)}$.
	For $k = 0$, this is the second part of Lemma \ref{lem:OszillationsOfVelocity}.
	
	Let us denote $x_i = X_i + h$ and $x_j = X_j + h$.
	Using the definition of $u_k$ from Proposition \ref{pro:LInftyEstimateSeries}, we observe
	\begin{equation}
		u_{k+1}(x_i) = u_k(x_i) - \sum_l Q_l u_k(x_i) = (u_k)_i - \sum_{l \neq i} Q_l u_k(x_i).
	\end{equation}
	Here we used that by Lemma \ref{lem:projectionInsideParticle},
	$Q_i u_k = u_k - (u_k)_i$ in $\overline{B_i}$, where $(u_k)_i = \fint_{\partial B_i} u_k$.
	Therefore,
	\begin{equation}
	\begin{aligned}
		&|u_{k+1} (x_i) - u_{k+1}(x_j)|	\\ &\leq |(u_k)_i - (u_k)_j| 
		+ \sum_{l \neq i,j} |Q_l u_k (x_i) - Q_l u_k (x_j)|
		+ |Q_j u_k (x_i)| + |Q_i u_k (x_j)| \\
	\end{aligned}
		\label{eq:OszApprSplit}
	\end{equation}
	For the first term on the right hand side, we use the induction hypothesis.
	Regarding the second term, 
	for all $l \neq i,j$, we use Lemma \ref{lem:semicircleCurve},
which provides curves $s_l \in C^1([0,1];\IR^3)$ from $X_i$ to $X_j$ such that 
	\[
		|s(t) - X_l| \geq \min\{|X_l-x_i|,|X_l-x_j|\},
	\]
	and
	\[
			|\dot{s}| \leq C|x_i-x_j|.
	\]
Using in addition Estimates \eqref{eq:dipoleDecay1} and \eqref{eq:NablaUTilde} from Proposition \ref{pro:LInftyEstimateSeries}, we deduce
	\begin{align}
		|Q_l u_k (x_i) - Q_l u_k (x_j)| & \leq 
		C |x_i-x_j| \int_0^1 |\nabla Q_l u_k (s_l(t))| \dd t \\
		& \leq C |X_i-X_j| R^{3} \left( \frac{1}{{|x_i - X_l|}^3} + \frac{1}{{|x_j - X_l|}^3} \right)
		\|\nabla u_{k}\|_{L^\infty(B_l)} \\
		& \leq C \alpha  |X_i-X_j| R^{3} 
		\left( \frac{1}{{|X_i - X_l|}^3} + \frac{1}{{|X_j - X_l|}^3} \right) (C \delta)^{k}.
	\end{align} 
	Thus, for the second term on the right hand side of \eqref{eq:OszApprSplit}, we deduce
	\begin{align}
		\sum_{l \neq i,j} |Q_l u_k (x_i) - Q_l u_k (x_j)|
	&\leq C \alpha  |X_i-X_j| (C \delta)^{k+1}.
	\end{align}
	For the third term on the right hand side of \eqref{eq:OszApprSplit}
	we observe that Estimates \eqref{eq:dipoleDecay2} and \eqref{eq:NablaUTilde} from Proposition \ref{pro:LInftyEstimateSeries} yield
	\begin{align}
		|Q_j u_k (x_i)| 
		 \leq C \frac{R^{3}}{|X_i - X_j|^2} \|\nabla u_{k}\|_{L^\infty(B_l)} 
		 \leq C \alpha \frac{R^{3}}{|x_i - X_j|^2} (C \delta)^{k} \leq C \delta{-1} \alpha (C \delta)^{k+1}|X_i-X_j|,
	\end{align}
	where we used $R < |X_i - X_j|$.
	
	Since we get the same estimate for the fourth term, this finishes the proof of the
	induction step. Thus, estimate \eqref{eq:OszApprInduction} holds true for
	all $k \in \IN$ which implies \eqref{eq:OszillationsOfVelocityApproximation}.
	Since $u_k$ converges to $v$ in $L^\infty(\IR^3)$ by Proposition \ref{pro:LInftyEstimateSeries},
	 this also proves \eqref{eq:OszillationsOfRealVelocity}.
\end{proof}

\begin{proposition}
	\label{pro:particlesRemainSeparated}
	Assume for some time $T_0 \geq 0$, there exists $\eps_0 >0$ and $Y_0 < \infty$ such that 
	$Y_\eps(T_0) \leq Y_0$ for all $	\eps < \eps_0$.
	Then, there exists $\eps_1 >0$ and $\theta > 0$, which depends only on $Y_0$ and $c_0$ from 
	Assumption \ref{cond:particlesSeparated} such that 
	\[
		Y_\eps(T_0 + \theta) \leq 2 Y_\eps (0)  \qquad \text{for all} \quad \eps < \eps_1.
	\]
\end{proposition}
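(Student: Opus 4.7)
The plan is to combine the Lipschitz-type bound from Lemma \ref{lem:OszillationsOfVelocityApproximation} with a Grönwall estimate on particle distances and to close the argument by a bootstrap on $Y_\eps$. As long as $Y_\eps(t) \leq 2Y_0$ on some sub-interval, Lemma \ref{lem:brutalEstimatesSums} (with the sharper $Y^2$ bound from the remark for the $k=2$ sum) yields
$$
\sup_j \frac{1}{N_\eps}\sum_{i\neq j}\frac{1}{|X_i(t)-X_j(t)|^2}\leq C_\ast (2Y_0)^2,
$$
$$
\sup_j \frac{\phi_\eps}{N_\eps}\sum_{i\neq j}\frac{1}{|X_i(t)-X_j(t)|^3}\leq C_\ast \phi_\eps (2Y_0)^3(\log N_\eps+\log(2Y_0)).
$$
By assumption \ref{cond:phiLogN}, the second bound lies below the threshold $\delta$ of Lemma \ref{lem:OszillationsOfVelocityApproximation} for all $\eps$ smaller than some $\eps_1$ depending only on $Y_0$ and $c_0$. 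Hence that lemma applies on such sub-intervals and produces
$$
|V_i(t)-V_j(t)| = |v_\eps(X_i(t))-v_\eps(X_j(t))| \leq C Y_0^2\, |X_i(t)-X_j(t)|
$$
for every pair $i\neq j$, with $C$ depending only on $c_0$.

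Next, I define the bootstrap time
$$
T^\ast := \sup\{t \geq T_0 : Y_\eps(s)\leq 2Y_0 \text{ for all } s\in[T_0,t]\}.
$$
Since $Y_\eps(T_0)\leq Y_0 < 2Y_0$ and $Y_\eps$ is continuous on $[T_0,T^\ast]$ (it is a finite maximum of non-decreasing continuous functions, and by Remark \ref{rem:particlesNotTouching} the distances stay bounded away from zero because $Y_\eps \leq 2Y_0$ and $\eps < \eps_1$), one has $T^\ast > T_0$. Combining the Lipschitz estimate with $\dot{X}_i = V_i$ and Grönwall's inequality on $[T_0,T^\ast]$ gives
$$
|X_i(t)-X_j(t)| \geq |X_i(T_0)-X_j(T_0)|\, e^{-CY_0^2 (t-T_0)}.
$$
Therefore, for $s\in[T_0,T^\ast]$ and any $i\neq j$,
$$
\frac{|X_i^0-X_j^0|}{|X_i(s)-X_j(s)|} \leq \frac{|X_i^0-X_j^0|}{|X_i(T_0)-X_j(T_0)|}\, e^{CY_0^2 (s-T_0)} \leq Y_0\, e^{CY_0^2 (s-T_0)},
$$
while for $s\in[0,T_0]$ the same quotient is directly bounded by $Y_\eps(T_0)\leq Y_0$. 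Taking the supremum over $s\in[0,t]$ and pairs yields $Y_\eps(t) \leq Y_0\, e^{CY_0^2 (t-T_0)}$ on $[T_0,T^\ast]$.

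Finally, I choose $\theta := (\log 2)/(2CY_0^2)$, depending only on $Y_0$ and $c_0$. Then for $t\in[T_0,T_0+\theta]\cap[T_0,T^\ast]$ the preceding bound gives $Y_\eps(t)\leq Y_0\sqrt{2} < 2Y_0$ with strict inequality, so by continuity $T^\ast \geq T_0+\theta$, and hence $Y_\eps(T_0+\theta)\leq 2Y_0$ as required. The main subtlety is the bootstrap itself: the Lipschitz bound of Lemma \ref{lem:OszillationsOfVelocityApproximation} cannot be applied unless the dipole sum $\sup_j\frac{\phi_\eps}{N_\eps}\sum_{i\neq j}|X_i-X_j|^{-3}$ is small, so one must verify that this quantity remains below $\delta$ throughout $[T_0,T^\ast]$. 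This is exactly what the smallness assumption \ref{cond:phiLogN}, combined with the running control $Y_\eps \leq 2Y_0$, provides.
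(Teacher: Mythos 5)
Your proposal is correct and follows essentially the same strategy as the paper: verify that on a maximal interval where $Y_\eps\leq 2Y_0$ the hypotheses of Lemma~\ref{lem:OszillationsOfVelocityApproximation} hold (via Lemma~\ref{lem:brutalEstimatesSums} and Assumption~\ref{cond:phiLogN}), apply the resulting Lipschitz bound for $v_\eps$ along particle trajectories, run Gr\"onwall to get exponential control of the distance ratios, and read off a uniform lower bound on the doubling time. The only cosmetic differences are that you invoke the sharper $Y^2$ estimate from the remark after Lemma~\ref{lem:brutalEstimatesSums} (the paper uses the cruder $Y^3$ bound, hence $\theta=\log 2/(C_\ast Y_0^3)$), and you carry an explicit factor $\tfrac12$ in $\theta$ so as to get a strict inequality $Y_0\sqrt{2}<2Y_0$ for the continuity argument, whereas the paper argues directly from the definition of the exit time $\theta_\eps$ that $\theta_\eps\geq\log2/(C_\ast Y_0^3)$; both are valid.
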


\begin{proof}
	By Lemma \ref{lem:brutalEstimatesSums}, 
	\begin{equation}
		\sup_j \frac{\phi_\eps}{N_\eps} \sum_{i \neq j} \frac {1}{|X_i - X_j|^3} 
		\leq C_\ast \phi_\eps Y_\eps^3 (\log (N_\eps) + \log(Y_\eps)).
	\end{equation}
	We choose $\eps_1 < \eps_0$ such that
	\[
		 C_\ast \phi_\eps (2 Y_0)^3 (\log (N_\eps) + \log(2 Y_0)) 
		 < \delta \qquad \text{for all} \quad \eps < \eps_1
	\]
	where $\delta$ is the constant from Lemma \ref{lem:OszillationsOfVelocityApproximation}.
	This is possible, since $\phi_\eps \log (N_\eps) \to 0$ as $\eps \to 0$ by Assumption \ref{cond:phiLogN}.
	Let 
	\[
		\theta_\eps := \sup\{ t \geq 0 \colon Y_\eps(T_0 + t) \leq 2 Y_0 \}	.
	\]	
	Since $Y_\eps$ is continuous in time, $\theta_\eps > 0$.
	Then, Lemma \ref{lem:OszillationsOfVelocityApproximation} and Lemma \ref{lem:brutalEstimatesSums} yield
	\[
		|v(X_i) - v(X_j)| \leq  C_\ast Y_0^3 |X_i-X_j| \qquad \text{for all} \quad \eps < \eps_1 
		\quad \text{and all} \quad t \leq T_0 + \theta_\eps.
	\]
	Since the particles are transported by $v$, this implies
	\[
		\dot{d}_{\eps,\min}(t) \geq C_\ast Y_0^3 d_{\eps,\min}(t) \qquad \text{for all} \quad \eps < \eps_1 
		\quad \text{and all} \quad t \leq T_0 + \theta_\eps.
	\]
	Hence,
	\[
		Y_\eps (T_0 + t) \leq Y_0 e^{C_\ast Y_0^3 t}   \qquad \text{for all} \quad \eps < \eps_1 
		\quad \text{and all} \quad t \leq \theta_\eps.
	\]
	By definition of $\theta_\eps$, this implies 
	\[
		\theta_\eps \geq \frac{\log 2}{C_\ast Y_0^3} =: \theta,
	\]
	which finishes the proof.
\end{proof}

\section{Approximations for the macroscopic fluid velocity}

In order to prove the convergence result Theorem \ref{th:main}, we need to relate the microscopic
fluid velocity $v_\eps$ to the macroscopic transport velocity $v_\ast$ in \eqref{eq:transportStokes}.
More precisely, we have to prove that inside the particles, $v_\eps$ is close to $v_\ast + \frac{2}{9}\xi_\ast^2 e$.
However, since $v_\ast = S(\rho e)$, it is convenient to compare $v_\ast$ to 
$\tilde{u}_\eps = S(\rho_\eps e)$, which has been introduced in Section
\ref{sec:VariationalProblem}. By Proposition \ref{pro:LInftyEstimateSeries}, we already know that we can replace 
$v_\eps$ by $u_\eps$. The following lemma provides an estimate between $\tilde{u}_\eps$ and $u_\eps$.

\begin{lemma}
	\label{lem:estimateUUTilde}
	Assume there is some constant $\alpha < \infty$ such that
	\[
		\sup_j\frac{1}{N}  \sum_{i \neq j} \frac {1}{|X_i - X_j|^2} \leq \alpha.
	\] 
	Let $\tilde{u}_\eps = S(\rho_\eps e)$, 
	$u_j = S \tilde{f}_j$, and $w_i = S f_j$, 
	where $\tilde{f}_j$ and  $f_j  $  are defined as in \eqref{eq:defTildeFi} and  \eqref{eq:defFi}.
	Then,
		\begin{equation}
		\label{eq:UUTildeWithoutSelfInteraction}
		\| (u_\eps - w_j) - (\tilde{u}_\eps - u_j) \|_{L^\infty(B_j)} \leq C \alpha R.
	\end{equation}
\end{lemma}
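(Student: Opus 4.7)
My plan is to reduce \eqref{eq:UUTildeWithoutSelfInteraction} to a pairwise estimate via linearity of $S$ and then to exploit that $f_i$ and $\tilde f_i$ carry the \emph{same} total force $F$.

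First I would use $u_\eps = \sum_i w_i$, $\tilde u_\eps = \sum_i u_i$, and linearity of the Stokes solution operator $S$ to write
\[
    (u_\eps - w_j) - (\tilde u_\eps - u_j) = \sum_{i \neq j}(w_i - u_i),
\]
reducing the problem to estimating each pairwise difference $(w_i - u_i)$ on $B_j$ for $i \neq j$.

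For each such $i$, I would express the difference as a convolution with the Oseen tensor $\Phi$ and crucially subtract the $y$-independent tensor $\Phi(x - X_i)$, using that both $f_i$ and $\tilde f_i$ integrate to $F$ (cf.\ \eqref{eq:defFi}, \eqref{eq:defTildeFi}):
\[
    (w_i - u_i)(x) = \int_{B_i}\bigl[\Phi(x - y) - \Phi(x - X_i)\bigr](f_i - \tilde f_i)(y) \dd y.
\]
For $x \in B_j$ and $y \in \overline{B_i}$ with $i \neq j$, the separation condition $d_{\eps,\min} \geq 4R$ from Remark \ref{rem:particlesNotTouching} yields $|x - y| \geq |X_i - X_j|/2$, so the Stokeslet gradient bound $|\nabla \Phi(\zeta)| \leq C/|\zeta|^2$ together with $|y - X_i| \leq R$ will give $|\Phi(x - y) - \Phi(x - X_i)| \leq CR/|X_i - X_j|^2$. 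Combined with $\|f_i\|_{L^1}, \|\tilde f_i\|_{L^1} \leq C|F|$ and $|F| \leq C/N$ from \eqref{eq:inertialess}, this should produce the pointwise bound
\[
    \|w_i - u_i\|_{L^\infty(B_j)} \leq \frac{CR}{N\,|X_i - X_j|^2}.
\]

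Finally, summing over $i \neq j$ and invoking the hypothesis on $\alpha$ closes the estimate:
\[
    \Bigl\|\sum_{i \neq j}(w_i - u_i)\Bigr\|_{L^\infty(B_j)} \leq \frac{CR}{N}\sum_{i \neq j}\frac{1}{|X_i - X_j|^2} \leq C\alpha R.
\]
The one delicate point is the cancellation of the $\Phi(x - X_i)$ term via the shared total force of $f_i$ and $\tilde f_i$; without it each term would decay only like $C|F|/|X_i - X_j|$, and the resulting sum is not controlled by the hypotheses at hand. Everything else—the decay of the Oseen tensor and the separation geometry—is entirely routine. (One could squeeze out an additional factor of $R/|X_i - X_j|$ by also exploiting the vanishing first moment of $f_i - \tilde f_i$ that follows from spherical symmetry, but this refinement is unnecessary since the above already balances against the $\alpha$-bound.)
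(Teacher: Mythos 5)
Your proof is correct and reaches the same pointwise bound $\|w_i - u_i\|_{L^\infty(B_j)} \leq CR/(N|X_i - X_j|^2)$ as the paper before the final summation, but by a genuinely different route. The paper's proof observes that $u_i - w_i \in W_i^\perp$ (a dipole supported in $\overline{B_i}$) and its boundary average vanishes by Lemma \ref{lem:projectionInsideParticle}, then bounds $\|u_i - w_i\|_{\Hdot(\IR^3)}$ via the extension estimate Lemma \ref{lem:extest}, and finally feeds this into the abstract dipole decay estimate Lemma \ref{lem:pCorrelationest} to obtain the pointwise decay; i.e.\ it runs the whole thing through the Hilbert-space machinery already set up for the method of reflections. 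You instead compute directly: you subtract the constant $\Phi(x - X_i)$ under the integral (justified because $f_i$ and $\tilde f_i$ carry the same total force $F$, so $f_i - \tilde f_i$ has zero mass), and then apply the mean-value bound $|\nabla \Phi(\zeta)| \leq C|\zeta|^{-2}$ with the separation $d_{\eps,\min} \geq 4R$. This is more elementary and self-contained — it needs no $\dot H^{-1}$ or extension arguments — and it makes the cancellation mechanism completely explicit. The paper's route, on the other hand, reuses Lemma \ref{lem:pCorrelationest}, which is needed anyway for the reflections argument, so it avoids re-deriving Oseen tensor estimates by hand. Note also the small imprecision in your integral formula: $f_i$ is a surface measure on $\partial B_i$, so the pairing should run over $\overline{B_i}$ (or be written as a duality pairing against the signed measure $f_i - \tilde f_i$) rather than a Lebesgue integral over $B_i$; this does not affect the argument, since for all $y$ in the support one still has $|y - X_i| \leq R$.
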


\begin{proof}
	We notice $u_i - w_i \in W_i^\perp$ (see \eqref{eq:characterizationWPerp}).
	In particular, $\supp \Delta(u_i - w_i) \subset B_i$ as a function in 
	$\dot{H}^{-1}_\sigma(\IR^3)$, and therefore,	 $u_i - w_i$ is the function of minimal norm in
	 $\Hdot_\sigma(\IR^3)$ that coincides with $u_i - w_i$ in $B_i$.
	Moreover, by Lemma \ref{lem:projectionInsideParticle},
	\[
		\fint_{\partial{B_i}} u_i(y) - w_i (y) \dd y = 0.
	\]
	Thus, by Lemma \ref{lem:extest},
	\begin{equation}
		\label{eq:UUTilde1}
		\| u_i - w_i \|_{\Hdot(\IR^3)} \leq \| \nabla (u_i - w_i) \|_{L^2(B_i)} 
		= \| \nabla u_i\|_{L^2(B_i)} \leq C R^{\frac{3}{2}} \| \nabla u_i\|_{L^\infty(B_i)} 
		\leq C \frac{R^{\frac{3}{2}}}{N R^2},
	\end{equation}
	where the last inequality follows using the fundamental solution of the Stokes equations.

	Together with Lemma \ref{lem:pCorrelationest} applied to $\Delta(u_i - w_i)$, estimate \eqref{eq:UUTilde1} yields
	for all $x  \in \IR^3 \backslash B_{2R}(X_i)$
	\begin{equation}
		\label{eq:UUTilde3}
		| u_i(x) - w_i(x)| 
		 \leq C \frac{R}{N |x - X_i|^2} 
	\end{equation}
	For $x \in B_j$, summing over all $i \neq j$ finishes the proof of \eqref{eq:UUTildeWithoutSelfInteraction}.
\end{proof}

\begin{lemma}
	\label{lem:StokesApprox}
	Let $w_{\delta,\eps} = S(\rho_\eps^{\delta}e) + \frac{2}{9} \xi_\eps^2 e$, 
	where $\rho_\eps^\delta$ is the averaged mass density of the particles according to Definition
	 \ref{def:CubeAverages}. 
	 Then,
	we have for all $\delta \geq d_{\eps,\min}(t)$
	\begin{equation}
		\label{eq:StokesApprox}
		\| u_\eps(t,\cdot) - w_{\delta,\eps}(t,\cdot) \|_{L^\infty(\cup_i B_i)} 
		\leq C_\ast \delta Y_\eps(t)^3.
	\end{equation}	
	Moreover,
	\begin{equation}
		\label{eq:wLipschitz}
		\|w_{\delta,\eps}\|_{L^\infty(0,t;W^{1,\infty}(\IR^3))} 
		\leq C_\ast Y_\eps(t)^3.
	\end{equation}		
	Furthermore, for all $\lambda_1, \lambda_2 \geq d_{\eps,\min}(t)$,
	\begin{equation}
		\label{eq:TwoStokesApprox}
		\| w_{\lambda_1,\eps}(t,\cdot) - w_{\lambda_2,\eps}(t,\cdot) \|_{L^\infty(\IR^3)} 
		\leq C_\ast Y_\eps(t)^3 \max\{\lambda_1,\lambda_2\} + C \xi_\eps^2.
	\end{equation}
\end{lemma}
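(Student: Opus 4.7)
For part (1), I work inside a particle $B_j$. By \eqref{eq:2Over9}, $w_j \equiv \tfrac{2}{9}\xi_\eps^2 e$ on $B_j$, so
\[
  (u_\eps - \tfrac{2}{9}\xi_\eps^2 e)(x) = \sum_{i \neq j}w_i(x) \qquad \text{for } x \in B_j.
\]
Writing $u_i := S\tilde f_i$ as in Lemma \ref{lem:estimateUUTilde}, I sum the estimate of that lemma over $i \neq j$ and insert the $\alpha$-bound of Lemma \ref{lem:brutalEstimatesSums} to replace $\sum_{i \neq j}w_i$ by $\sum_{i \neq j} u_i = \tilde u_\eps - u_j$ at cost $C_\ast R_\eps Y_\eps^3 \leq C_\ast \delta Y_\eps^3$. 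Since $\tilde u_\eps = S(e \rho_\eps)$ by \eqref{eq:massDensityIsSource}, this yields
\[
  (u_\eps - w_{\delta,\eps})(x) = S(e(\rho_\eps - \rho_\eps^\delta))(x) - u_j(x) + O(R_\eps Y_\eps^3) \qquad \text{on } B_j.
\]

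The core of the argument is a cube-by-cube analysis of $S(e(\rho_\eps - \rho_\eps^\delta))(x) - u_j(x)$ using the cubes of Definition \ref{def:CubeAverages}. For the cube $Q := Q_\delta^{X_j}$ containing $X_j$, the identity $S(e \rho_\eps \chi_Q)(x) = u_j(x) + \sum_{i \in Q, i \neq j}u_i(x)$ cancels the singular self-term exactly, leaving $\sum_{i \in Q, i \neq j}u_i(x) - \bar\rho_Q \int_Q \Phi(x-y) e \dd y$ (with $\Phi$ the Oseen tensor). I bound this by $C_\ast \delta Y_\eps^3$ using: (i) the $L^\infty$-bound $\|\rho_\eps^\delta\|_{L^\infty} \leq C_\ast Y_\eps^3$, which follows from the separation estimate of Remark \ref{rem:particlesNotTouching} giving $n_Q \lesssim N\delta^3 Y_\eps^3$ particles per cube; (ii) $\int_Q |\Phi(x-y)|\dd y \lesssim \delta^2$; and (iii) a direct estimate on $\sum_{i \in Q, i \neq j}|u_i(x)|$ via the pointwise Oseen bound $|u_i(x)|\lesssim 1/(N|x-X_i|)$ combined with an integral-average argument using the density $\psi$ from the proof of Lemma \ref{lem:brutalEstimatesSums}. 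The same argument handles the 26 adjacent cubes. For far cubes $\tilde Q$ at distance $\geq 2\delta$ from $x$, the zero-mean property $\int_{\tilde Q}(\rho_\eps - \rho_\eps^\delta) = 0$ allows me to subtract a constant and Taylor-expand $\Phi(x-y) - \Phi(x - y_{\tilde Q}) = O(\delta / |x - y_{\tilde Q}|^2)$; the resulting sum is bounded by $C_\ast \delta Y_\eps^3$ via Lemma \ref{lem:brutalEstimatesSums}.

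For parts (2) and (3) I reduce to part (1) and standard potential theory. For part (2), the constant $\tfrac{2}{9}\xi_\eps^2 e$ is bounded, and for $S(\rho_\eps^\delta e)$ I combine $\|\rho_\eps^\delta\|_{L^\infty} \leq C_\ast Y_\eps^3$ with the decay of $\rho_\eps^\delta$ inherited from $\rho_0 \in X_\beta$, and apply the classical estimate for convolutions with $\Phi$ and $\nabla \Phi$ (splitting around each evaluation point at an optimized radius) to conclude $\|S(\rho_\eps^\delta e)\|_{W^{1,\infty}(\IR^3)} \leq C_\ast Y_\eps^3$. For part (3), applying part (1) at both scales $\lambda_1, \lambda_2$ gives $\|u_\eps - w_{\lambda_i,\eps}\|_{L^\infty(\cup_j B_j)} \leq C_\ast \lambda_i Y_\eps^3$. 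To extend to $L^\infty(\IR^3)$ I observe that outside the particles $u_\eps = \sum_i w_i$ carries no self-interaction, whereas each $w_{\lambda_i,\eps}$ contains the additive constant $\tfrac{2}{9}\xi_\eps^2 e$; this produces an extra $C\xi_\eps^2$ mismatch, visible already as $|x|\to\infty$. The triangle inequality $\|w_{\lambda_1,\eps} - w_{\lambda_2,\eps}\|_{L^\infty} \leq \|w_{\lambda_1,\eps} - u_\eps\|_{L^\infty} + \|w_{\lambda_2,\eps} - u_\eps\|_{L^\infty}$ then yields the stated bound.

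The main obstacle is the cube analysis in part (1): one must exploit the exact cancellation of the singular self-term $u_j(x)$ against particle $j$'s contribution inside its own cube, while simultaneously controlling the non-cancelling contributions of the other particles in $Q$ and its 26 neighbors, where the Oseen kernel is not well-separated from the source. The far-cube Taylor expansion is routine once the zero-mean property is exploited, and parts (2)--(3) follow cleanly from part (1) plus standard potential theory.
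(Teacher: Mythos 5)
Your treatment of \eqref{eq:StokesApprox} follows the paper's own proof quite closely: the paper also removes the self-interaction exactly (by subtracting $\tilde f_i$ inside the integral rather than by the explicit identity $S(e\rho_\eps\chi_Q)=u_j+\sum_{i\in Q,i\neq j}u_i$, but the effect is identical), invokes Lemma~\ref{lem:estimateUUTilde} to trade $w_i$ for $u_i$ at cost $C\alpha R\leq C_\ast\delta Y^3$, and then splits $\IR^3$ into the $27$ near cubes and the far cubes, using the zero--mean property plus the kernel estimate $|\Phi(x-y)-\fint_{Q^z}\Phi(x-z')\,dz'|\leq C\delta/|x-z|^2$ for the far contribution. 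So for part (1) you have reproduced the paper's argument in slightly different packaging.

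For \eqref{eq:wLipschitz} there is a genuine gap. You propose to combine $\|\rho_\eps^\delta\|_{L^\infty}\leq C_\ast Y^3$ with ``the decay of $\rho_\eps^\delta$ inherited from $\rho_0\in X_\beta$.'' At time $t>0$ there is no such inherited decay available at this stage of the argument: the $X_\beta$ norm of $\rho_\eps^\delta(t,\cdot)$ could a priori grow, since controlling the displacement of the particles requires an $L^\infty$ bound on the velocity, which is exactly what you are trying to establish --- the reasoning is circular. The paper instead uses conservation of mass, $\|\rho_\eps^\delta\|_{L^1(\IR^3)}=C$, which holds trivially at all times, together with the $L^\infty$ bound \eqref{eq:boundAveragedDensity}; splitting the Oseen convolution at radius $1$ then gives $\|S(\rho_\eps^\delta e)\|_{W^{1,\infty}}\leq C(\|\rho_\eps^\delta\|_{L^1}+\|\rho_\eps^\delta\|_{L^\infty})\leq C_\ast Y^3$. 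This is the estimate you should use.

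For \eqref{eq:TwoStokesApprox} your route (triangle inequality through $u_\eps$) differs from the paper's direct comparison of $S(e(\rho_\eps^{\lambda_1}-\rho_\eps^{\lambda_2}))$, and it is a legitimate alternative, but your justification of the extra $C\xi_\eps^2$ is incomplete. You argue the mismatch is ``visible already as $|x|\to\infty$,'' which shows the term cannot be dispensed with, but it does not show the uniform bound. To extend $\|u_\eps-w_{\lambda,\eps}\|$ from $\cup_i B_i$ to all of $\IR^3$ you must redo the near-cube analysis at points $x$ between the particles: there the nearest $w_j(x)$ is no longer equal to $\frac{2}{9}\xi_\eps^2 e$ (it is only bounded by it), so subtracting $w_j$ and comparing to the constant costs an extra $C\xi_\eps^2$, and the remaining near-cube sum $\sum_{i\neq j}$ is harmless because Lemma~\ref{lem:pointsOutsideParticles} keeps $x$ a distance $\gtrsim d_{\min}$ from all other particles. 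You should make this local argument explicit rather than appealing only to the behavior at infinity.
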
 

\begin{proof}
	Fix $\eps > 0$. Then, $u_\eps = \sum_i w_i$, where 
	$w_i = S f_i$ with $f_i$ as in \eqref{eq:defFi}. Since by \eqref{eq:2Over9} $w_i(x) = \frac{2}{9} \xi^2 g$ in $B_i$, we have for 
	all $x \in B_i$
	\[
		u_\eps(x) - (S(\rho_\eps^{\delta}e)(x) + \frac{2}{9} \xi_\eps^2 e) 
		= u_\eps(x) - w_i(x) - (S(\rho_\eps^{\delta}e)(x).
	\] 
	Hence, Lemma \ref{lem:estimateUUTilde} yields
	\[
		| u_\eps(x) - (S(\rho_\eps^{\delta}e)(x) + \frac{2}{9} \xi_\eps^2 e)|
		\leq  |\tilde{u}_\eps(x) - u_i(x) - (S(\rho_\eps^{\delta}e)(x)| + C \alpha R,
	\] 
	where $u_i := S f_i$ and 
	we recall from \eqref{eq:defFi}
	\[
		f_i = \frac{e}{\phi_\eps} \chi_{B_i}. 
	\]	
	We can write
	\[
		\tilde{u}_\eps - u_i = S(\rho_\eps e - f_i) 
	\]
	
	We recall that the solution operator $ S $ for the Stokes equations is a convolution operator, where the convolution kernel is the 
	Oseen tensor
\begin{equation}
	\label{eq:OseenTensor}
	\Phi(x) = \frac{1}{8\pi}\left( \frac{1}{|x|} + \frac{x \otimes x}{|x|^3} \right).
\end{equation}

We denote by $\Gamma_{\delta}$ the set of centers of the cubes from 
Definition \ref{def:CubeAverages}. We define $I_1 \subset \Gamma_{\delta}$ to contain the center of the cube
$Q_{\delta}^{x}$ as well as the centers of all cubes adjacent to $Q_{\delta}^{x}$.
Then $|I_1| = 27$. Let $I_2 \subset \Gamma_{\delta}$ be the centers of those remaining cubes which are not 
disjoint to the support of $\rho_\eps(t)$. 
We observe that for all $z \in \IR^3$
\[
	\int_{Q_{\delta}^z} \Phi(x-y) e \left(\rho_\eps(y) - \fint_{Q_{\delta}^\lambda} \rho_\eps(z') \dd z'\right) \dd y
	= \int_{Q_{\delta}^z} \left(\Phi(x-y) -  \fint_{Q_{\delta}^\lambda} \Phi(x-z') \dd z'\right)\rho_\eps(y) \dd y.
\]
Thus,
\begin{align}
	|S(\rho_\eps e - f_i)(x) -S(\rho_\eps^{\delta} e)(x)|
	&= \left|  \int_{\IR^3} \Phi(x-y) e \left(\rho_\eps(y) - \frac{\chi_{B_i}(y)}{\phi_\eps} - 
										\fint_{Q_{\delta}^y} \rho_\eps(z) \dd z\right) \dd y\right| \\
	&\leq \sum_{\lambda \in I_1} \int_{Q_{\delta}^\lambda} |\Phi(x-y)| 
	\left(\rho_\eps(y) - \frac{\chi_{B_i}(y)}{\phi_\eps} + \rho_\eps^{\delta}(y)\right) \dd y \\
	{} &+ \sum_{z \in I_2}  
	\int_{Q_{\delta}^z} \left|\Phi(x-y) -  \fint_{Q_{\delta}^z} \Phi(x-z') \dd z'\right|\rho_\eps(y) \dd y\\
	&=: A + B.
\end{align}
Recalling $\rho_\eps = \frac 1{\phi_\eps} \sum_i \chi_{B_i}$ leads to
\begin{equation}
	\label{eq:boundAveragedDensity}
	\rho_\eps^{\delta} (y) = \frac{R_\eps^3}{\phi_\eps \delta^3} |\{X_i \in Q_\delta^y\}| \leq
	 \frac{C}{N_\eps {d_{\eps,\min}(t)}^3}  \leq C_\ast Y_\eps(t)^3.
\end{equation}
Using this as well as $|\Phi(x)| \leq C/|x|$, we deduce
\begin{align}
	A &\leq C \sum_{\substack{j \in B_{C \delta}(x) \\ j \neq i}} \frac{1}{N_\eps |x - X_j|}
	  +\int_{B_{2{\delta}}(x)} \frac{1}{|x-y|} \dd y \\
	&\leq\left(1 + C\frac{1}{N_\eps d_{\eps,\min}(t)^3}\right) \int_{B_{C{\delta}}(x)} \frac{1}{|y-x|} \dd y \\
	& \leq C_\ast Y_\eps(t)^3 {\delta}^2.
\end{align}
From the explicit expression of $\Phi$ in \eqref{eq:OseenTensor}, it follows for all $z\in I_2$
and all $y \in Q_{\delta}^z$
\begin{equation}
	\label{eq:AveragePhi}
	\left|\Phi(x-y) -  \fint_{Q_{\delta}^z} \Phi(x-z') \dd z'\right| \leq C \frac{\delta}{|x-z|^2}.
\end{equation}
We define
	\[
		M(x) := |\{ X_i \in Q_{\delta}^x \}|
	\]
	Note that
	\[
		\| M \|_{L^1(\IR^3)} = \delta^3 N.
	\]
	Moreover, 
	\[
		\| M \|_{L^\infty(\IR^3)} \leq C  \left(\frac{\delta}{d_{\eps,\min}(t)}\right)^3  
								\leq C_\ast Y_\eps(t)^3 \delta^3 N 				
	\]
	Combining the $L^\infty$- and $L^1$-estimates of $M$ with \eqref{eq:AveragePhi} yields
	\[
		B \leq C \frac{\delta}{ N}  \sum_{z \in I_2}   \frac {M(z)}{|z - x|^2} 
		 \leq \frac{C \delta}{N \delta^3}  \int_{\IR^3} \frac {M(y)}{|y - X_j|} \dd y 
		 \leq C_\ast Y_\eps(t)^3 \delta. 
	\]
	Combining the error estimates for $A$ and $B$ proves \eqref{eq:StokesApprox}.
	
	The proof of \eqref{eq:TwoStokesApprox} is almost completely analogous. The only difference is that,
	due to the averaging, there in no problem with a particle that is close to the point where we estimate.
	Therefore, the estimate holds true in the whole of $\IR^3$.
	
	By using again the fundamental solution, estimate \eqref{eq:wLipschitz} is a direct consequence of the estimates \eqref{eq:boundAveragedDensity}
	and
	\[
		\| \rho_\eps^{\delta} \|_{L^1(\IR^3)} = C. \qedhere
	\]
\end{proof}

\section{Convergence to the macroscopic equation}

\subsection{Convergence for small times}

We first prove the main theorem \ref{th:main} up to times, for which $Y_\eps$ is unifomly bounded in $\eps$ for small $\eps$.

	We already know from Proposition \ref{pro:particlesRemainSeparated} that there exists such
	a time $T_0>0$.
	In Section \ref{sec:ConvergenceArbitraryTimes}, we will prove that $Y_\eps$ is actually uniformly bounded for small $\eps$ for every finite time interval.

\begin{theorem}
	\label{th:convergenceMacro}
	Assume conditions \ref{cond:particlesSeparated}-\ref{cond:screeningLength} are satisfied.
	Moreover, assume that for $T_0 > 0$ there exists an $\eps_0 > 0$ and $C_1 < \infty$ such that 
	\[
		Y_\eps(T_0) \leq C_1, 
		\qquad \text{for all} \quad \eps  < \eps_0.
	\]
	Let $\tilde{\delta}_\eps \to 0 $, such that $\tilde{\delta}_\eps = n_\eps \delta_\eps$ for some $n_\eps \in \IN^\ast$ with
	$n \to \infty $ as $\eps \to 0$. Then, if Assumption   \ref{ass:ConvergenceInitialData} is satisfied with some $\beta > 2$,
	\[
		\rho_\eps^{\tilde{\delta}_\eps} \to \rho  \quad \text{in} ~ L^\infty([0,T_0);X_\beta),
	\]
	where $\rho$ is the unique solution to problem \eqref{eq:transportStokes}.
\end{theorem}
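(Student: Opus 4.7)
The approach is to combine the velocity approximations of Sections \ref{sec:L^2Estimates}--\ref{sec:LInfty} with a Gronwall-type stability argument along characteristics. Since $Y_\eps$ is non-decreasing in time (being defined as a supremum over $[0,t]$), the hypothesis $Y_\eps(T_0) \leq C_1$ gives $Y_\eps(t) \leq C_1$ for all $t \in [0,T_0]$ and $\eps < \eps_0$. Setting $\bar w_\eps(t,\cdot) := S(\rho_\eps^{\tilde\delta_\eps}(t,\cdot)\, e) + \tfrac{2}{9}\xi_\eps^2 e$, Proposition \ref{pro:LInftyEstimateSeries} and Lemma \ref{lem:StokesApprox} together yield
\[
    \sup_{t \in [0,T_0]} \|v_\eps(t,\cdot) - \bar w_\eps(t,\cdot)\|_{L^\infty(\cup_i B_i(t))} \to 0 \quad \text{as } \eps \to 0,
\]
while \eqref{eq:wLipschitz} provides a uniform-in-$\eps$ Lipschitz bound on $\bar w_\eps$.

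Let $\Psi_\eps^t$ denote the flow map of $\bar w_\eps$ and $\Phi^t$ that of $w_\ast := S(\rho e) + \tfrac{2}{9}\xi_\ast^2 e$. Since $\dot X_i = v_\eps(\cdot,X_i) \approx \bar w_\eps(\cdot,X_i)$ on the particles, a Gronwall estimate on the ODE for $|X_i(t) - \Psi_\eps^t(X_i^0)|$ gives a uniform-in-$i$ bound $\epsilon_1(\eps) \to 0$ on $[0,T_0]$. A combinatorial counting argument, comparing the number of particles in $Q_{\tilde\delta_\eps}^x$ at time $t$ with the number of initial positions in $(\Psi_\eps^t)^{-1}(Q_{\tilde\delta_\eps}^x)$, then shows
\[
    \|\rho_\eps^{\tilde\delta_\eps}(t,\cdot) - \rho_{\eps,0}^{\tilde\delta_\eps} \circ (\Psi_\eps^t)^{-1}\|_{X_\beta} \to 0,
\]
using that $\bar w_\eps$ is divergence-free (so $\Psi_\eps^t$ is volume-preserving) and that preimages of cubes at scale $\tilde\delta_\eps$ are close to cubes in measure (provided $\tilde\delta_\eps \gg R_\eps + \epsilon_1(\eps)$, which is guaranteed by $\tilde\delta_\eps = n_\eps \delta_\eps$ with $n_\eps \to \infty$).

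To close the loop, set $\eta(t) := \|\rho_\eps^{\tilde\delta_\eps}(t,\cdot) - \rho(t,\cdot)\|_{X_\beta}$. Using $\rho(t,\cdot) = \rho_0 \circ (\Phi^t)^{-1}$, one obtains
\[
    \eta(t) \leq \|\rho_{\eps,0}^{\tilde\delta_\eps} - \rho_0\|_{X_\beta} + C\|\nabla\rho_0\|_{X_\beta}\,\|(\Psi_\eps^t)^{-1} - (\Phi^t)^{-1}\|_{L^\infty} + o_\eps(1).
\]
The flow difference is controlled by $\int_0^t \|\bar w_\eps - w_\ast\|_{L^\infty} \dd s$, and boundedness of the Stokes solution operator $S \colon X_\beta \to L^\infty$ (valid for $\beta > 2$, to be justified using the Oseen tensor) yields $\|\bar w_\eps(s,\cdot) - w_\ast(s,\cdot)\|_{L^\infty} \leq C\eta(s) + C|\xi_\eps^2 - \xi_\ast^2|$. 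Combining this with Assumption \ref{ass:ConvergenceInitialData} and \ref{cond:screeningLength}, Gronwall's inequality gives $\sup_{t \in [0,T_0]} \eta(t) \to 0$, which is the desired convergence.

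The main obstacle is the combinatorial step leading to the second display: one must carefully handle the boundary-layer particles near $\partial(\Psi_\eps^t)^{-1}(Q_{\tilde\delta_\eps}^x)$ (of thickness $R_\eps + \epsilon_1(\eps)$), propagate the weighted decay of the $X_\beta$-norm through the flow (noting that $\Phi^t$ is asymptotically a translation by $\tfrac{2}{9}\xi_\ast^2 e\,t$ at infinity, so the $|x|^{-\beta}$ decay is preserved up to a shift), and maintain the hierarchy of scales $R_\eps \ll d_{\eps,\min}(t) \ll \tilde\delta_\eps \ll 1$ uniformly in time. This is where the choice $\tilde\delta_\eps = n_\eps\delta_\eps$ with $n_\eps\to\infty$ plays a delicate role, allowing $\tilde\delta_\eps$ to be large enough to absorb the $L^\infty$-error from the method of reflections while still being small enough that $\rho_{\eps,0}^{\tilde\delta_\eps} \to \rho_0$.
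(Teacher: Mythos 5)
Your broad strategy matches the paper's: replace $v_\eps$ by an explicitly defined intermediate transport velocity, compare averaged densities along characteristics, and close with Gronwall in $X_\beta$ using the boundedness of $S\colon X_\beta\to W^{1,\infty}$. The paper's proof introduces the same ingredients, packaged as intermediate densities $\tau_\eps$ and $\sigma_\eps$ and three lemmas (Lemma~\ref{lem:rhoTau}, Lemma~\ref{lem:sigmaTau}, plus the closing Gronwall).

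There is, however, a genuine gap in the choice of intermediate velocity. You set $\bar w_\eps := S(\rho_\eps^{\tilde\delta_\eps}e)+\tfrac29\xi_\eps^2 e$, i.e.\ you use the \emph{coarse} scale $\tilde\delta_\eps$ both for the smoothed velocity and for the cubes on which you compare densities. By Lemma~\ref{lem:StokesApprox}, the replacement error is $\|u_\eps - \bar w_\eps\|_{L^\infty(\cup B_i)}\lesssim \tilde\delta_\eps Y_\eps^3$, so after Gronwall the discrepancy $\epsilon_1(\eps)$ between the true particle trajectories $X_i(t)$ and the $\bar w_\eps$-transported trajectories $\Psi_\eps^t(X_i^0)$ is of order $\tilde\delta_\eps$. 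But your combinatorial counting step requires precisely $\epsilon_1(\eps)\ll\tilde\delta_\eps$: the boundary layer of a cube $Q_{\tilde\delta_\eps}^x$ in which particles may or may not have crossed the cube boundary has thickness $\sim\epsilon_1(\eps)$, hence volume $\sim\epsilon_1(\eps)\tilde\delta_\eps^2$, and the number of particles it may contain, divided by the number in the cube, is $\sim\epsilon_1(\eps)/\tilde\delta_\eps$. With $\epsilon_1(\eps)\sim\tilde\delta_\eps$ this ratio is $O(1)$ rather than $o(1)$, so the estimate $\|\rho_\eps^{\tilde\delta_\eps}(t,\cdot)-\rho_{\eps,0}^{\tilde\delta_\eps}\circ(\Psi_\eps^t)^{-1}\|_{X_\beta}\to 0$ does not follow. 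The condition you state, $\tilde\delta_\eps\gg R_\eps+\epsilon_1(\eps)$, is not satisfied under your own definitions.

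The paper resolves this by working with \emph{two} scales: the intermediate velocity $w_{\delta_\eps} = S(\rho_\eps^{\delta_\eps}e)+\tfrac29\xi_\eps^2 e$ is smoothed at the \emph{fine} scale $\delta_\eps$, while the density comparison is performed on cubes of the \emph{coarse} scale $\tilde\delta_\eps = n_\eps\delta_\eps$. Then $\epsilon_1(\eps)\sim\delta_\eps$, and the boundary-layer ratio is $\delta_\eps/\tilde\delta_\eps = 1/n_\eps\to 0$; this is exactly how the hypothesis $n_\eps\to\infty$ is used. The closing Gronwall then also requires the additional comparison $\|w_{\tilde\delta_\eps,\eps}-w_{\delta_\eps}\|_{L^\infty}\lesssim\tilde\delta_\eps$ from \eqref{eq:TwoStokesApprox} to pass from the $\delta_\eps$-smoothed velocity to the $\tilde\delta_\eps$-averaged density appearing in $\eta(t)$. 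With these two modifications --- defining $\bar w_\eps$ via $\rho_\eps^{\delta_\eps}$ instead of $\rho_\eps^{\tilde\delta_\eps}$, and inserting \eqref{eq:TwoStokesApprox} into the last Gronwall --- your outline becomes correct and essentially coincides with the paper's argument.
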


	We do not prove smallness of $\rho_\eps^{\tilde{\delta}_\eps} - \rho$ directly.
	Instead, we introduce `intermediate' mass densities $\tau_\eps$ and $\sigma_\eps$.
	To this end, we denote again 
 \[
 	w_{\delta_\eps}:=S(\rho_\eps^{\delta_\eps}e) + \frac{2}{9} \xi_\eps^2 g,
 \]
 which it the approximation for the macroscopic fluid velocity studied in Section 7.
 We define $\tau_\eps$ to be the solution to
	\begin{align}
		\tau_\eps(0,\cdot) &= \rho_{\eps,0}, \\
		\partial_t \tau_\eps + w_{\delta_\eps} \cdot \nabla \sigma_\eps  &= 0,
	\end{align}
	and $\sigma_\eps$ to be the solution to
	\begin{align}
		\sigma_\eps(0,\cdot) &= \rho_{0,\eps}^{\tilde{\delta}_\eps}, \\
		\partial_t \sigma_\eps + 	w_{\delta_\eps} \cdot \nabla \sigma_\eps &= 0.
	\end{align}
	Then, the difference between $\rho_\eps$ and $\tau_\eps$ lies only in the transport velocity,
	and the difference between $\tau_\eps$ and $\sigma_\eps$ lies only in the initial datum.
	In Lemma \ref{lem:rhoTau}, we prove smallness of $\tau_\eps^{\tilde{\delta}_\eps} - \rho_\eps^{\tilde{\delta}_\eps}$, in Lemma \ref{lem:sigmaTau}, we prove smallness of
	$ \tau_\eps^{\tilde{\delta}_\eps} - \sigma_\eps$.
	Then, the proof of Theorem \ref{th:convergenceMacro} reduces to proving smallness of
	$\tau_\eps - \rho$.

\begin{notation}
	In this section,  any constant $\tilde{C}$ might depend on $c_0$, the fixed time $T_0$,  $C_1$ and $\|\nabla \rho_0\|_{X_\beta}$.
\end{notation}

\begin{lemma}
	\label{lem:rhoTau}
Under the assumptions of Theorem \ref{th:convergenceMacro},
\[
	\| \tau_\eps^{\tilde{\delta}_\eps} - \rho_\eps^{\tilde{\delta}_\eps} \|_{L^\infty([0,T_0);{X_\beta})} \to 0,
\]
where $\tau_\eps^{\tilde{\delta}_\eps}$ and $\rho_\eps^{\tilde{\delta}_\eps}$ are averages on cubes as in Definition
\ref{def:CubeAverages}.
\end{lemma}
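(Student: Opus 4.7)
The plan is to exploit that $\rho_\eps$ and $\tau_\eps$ are transported, from the same initial data, by two velocity fields that are close on the support of the particles. Since $\rho_\eps(t,\cdot) = \phi_\eps^{-1}\sum_i \chi_{B_i(t)}$ and $w_{\delta_\eps}$ is divergence-free (so the flow $\Psi_\eps^t$ of $w_{\delta_\eps}$ is volume-preserving), one also has the explicit representation $\tau_\eps(t,\cdot) = \phi_\eps^{-1}\sum_i \chi_{\Psi_\eps^t(B_i^0)}$. The problem thus reduces to controlling the displacement between the trajectories $X_i(t)$ and $\tilde X_i(t):=\Psi_\eps^t(X_i^0)$, and then comparing averages by a counting argument.

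For the first part, combining Lemma~\ref{lem:StokesApprox} (with $\delta=\delta_\eps$) with Proposition~\ref{pro:LInftyEstimateSeries}, together with the standing hypothesis $Y_\eps(t)\leq C_1$ on $[0,T_0]$ and Lemma~\ref{lem:brutalEstimatesSums}, yields
\[
\|v_\eps(t) - w_{\delta_\eps}(t)\|_{L^\infty(\bigcup_i B_i(t))} \leq \tilde C(\delta_\eps + \phi_\eps + R_\eps) \longrightarrow 0.
\]
Using this bound together with the uniform Lipschitz estimate \eqref{eq:wLipschitz} on $w_{\delta_\eps}$ and Gronwall's inequality then gives
\[
\eta_\eps(t) := \sup_i |X_i(t) - \tilde X_i(t)| \leq \tilde C (\delta_\eps + \phi_\eps + R_\eps),
\]
which is much smaller than $\tilde\delta_\eps = n_\eps \delta_\eps$ in view of Remark~\ref{rem:orderOfDelta} and the fact that $R_\eps \ll d_{\eps,\min} \ll \delta_\eps$.

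The decisive step is the counting argument at the level of averages. The Lipschitz bound on $w_{\delta_\eps}$ ensures $\Psi_\eps^t(B_i^0) \subset B_{\tilde C R_\eps}(\tilde X_i(t))$, while volume-preservation gives $|\Psi_\eps^t(B_i^0)|=|B_i^0|$. For a given cube $Q=Q_{\tilde\delta_\eps}^x$, particles whose centers $X_i(t)$ and $\tilde X_i(t)$ are both at distance more than $\eta_\eps + \tilde C R_\eps$ from $\partial Q$ contribute identically to $\rho_\eps^{\tilde\delta_\eps}(x)$ and $\tau_\eps^{\tilde\delta_\eps}(x)$ (the balls are either both contained in $Q$, with equal volume, or both disjoint from $Q$). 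The discrepancy therefore comes only from particles whose center $X_i(t)$ lies in a slab of thickness $\eta_\eps + \tilde C R_\eps$ around $\partial Q$. The uniform density bound $d_{\eps,\min}(t)^{-3}\leq C_\ast N_\eps Y_\eps^3$ (provided by $Y_\eps\leq C_1$) gives at most $\tilde C N_\eps \tilde\delta_\eps^2(\eta_\eps+R_\eps)$ such boundary particles per cube, each contributing at most $|B_i|/(\phi_\eps \tilde\delta_\eps^3) = \tilde C/(N_\eps \tilde\delta_\eps^3)$ to either average. Hence
\[
|\rho_\eps^{\tilde\delta_\eps}(t,x) - \tau_\eps^{\tilde\delta_\eps}(t,x)| \leq \tilde C \frac{\eta_\eps + R_\eps}{\tilde\delta_\eps} \longrightarrow 0
\]
uniformly in $x$ and $t\in[0,T_0)$. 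To upgrade this $L^\infty$-bound to the $X_\beta$-norm, I would use that the particle speeds are bounded uniformly in $\eps$ (via $\|w_{\delta_\eps}\|_{L^\infty}$ and Proposition~\ref{pro:LInftyEstimateSeries}), so that the supports of $\rho_\eps(t)$ and $\tau_\eps(t)$ remain in a $\tilde C T_0$-enlargement of the initial support; combined with the $X_\beta$-decay of $\rho_{\eps,0}^{\tilde\delta_\eps}$ inherited from Assumption~\ref{ass:ConvergenceInitialData} (using Remark~\ref{rem:orderOfDelta} and $\nabla\rho_0\in X_\beta$), this controls the $(1+|x|^\beta)$-weighted contribution from large $|x|$.

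I expect the main obstacle to be the counting step: it requires the flow displacement $\eta_\eps$ to be strictly smaller than the averaging scale $\tilde\delta_\eps$, which in turn rests essentially on the scale separation $\tilde\delta_\eps/\delta_\eps = n_\eps \to \infty$ and the smallness of $\phi_\eps$ and $R_\eps$ relative to $\delta_\eps$. Without this separation, mismatched-flow particles near the cube boundary could produce an $O(1)$ error.
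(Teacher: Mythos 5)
Your overall strategy mirrors the paper's: reduce to the discrepancy between the two flows via the velocity comparison $\|v_\eps-w_{\delta_\eps}\|_{L^\infty(\cup B_i)}\lesssim\delta_\eps+\phi_\eps+R_\eps$ and Gronwall (giving a trajectory displacement $\eta_\eps\ll\tilde\delta_\eps$), and then isolate the contribution of particles near the boundary of each cube $Q^x_{\tilde\delta_\eps}$; this is exactly the paper's sandwich argument with the shrunk and enlarged cubes $q^x,\bar q^x$.

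The gap is in the last step, the passage from an $L^\infty$-bound to an $X_\beta$-bound. Your slab count uses the uniform density bound $d_{\eps,\min}^{-3}\le C_*N_\eps Y^3$, yielding at most $\tilde C N_\eps\tilde\delta_\eps^2(\eta_\eps+R_\eps)$ boundary particles and hence $|\rho_\eps^{\tilde\delta_\eps}(t,x)-\tau_\eps^{\tilde\delta_\eps}(t,x)|\le\tilde C(\eta_\eps+R_\eps)/\tilde\delta_\eps$; this is uniform in $x$ and multiplies to $+\infty$ against $(1+|x|^\beta)$. Your proposed fix — finite speed of propagation plus the $X_\beta$-bound on $\rho_{\eps,0}^{\tilde\delta_\eps}$ — gives, for large $|x|$, a cube count of order $N_\eps\tilde\delta_\eps^3/(1+|x|^\beta)$, but since the slab has thickness $\eta_\eps\ll\tilde\delta_\eps$ this only shows the weighted discrepancy is $O(1)/(1+|x|^\beta)$, i.e.\ bounded in $X_\beta$, not $o(1)$. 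The smallness ($\eta_\eps/\tilde\delta_\eps$-type factor) and the $(1+|x|^\beta)^{-1}$-decay must be extracted simultaneously; neither a uniform $d_{\eps,\min}$ count nor a coarse $\tilde\delta_\eps$-scale trace-back does this on its own. The paper resolves this by covering the boundary slab by $M\lesssim\tilde\delta_\eps^2/\delta_\eps^2$ balls $B_{\delta_\eps}(X_i(t))$ — a count coming from geometry, not from density — and, for each such ball, tracing its mass back to time $0$ and estimating it by the fine-scale average $\rho_{\eps,0}^{\delta_\eps}$, which by Assumption~\ref{ass:ConvergenceInitialData} is bounded in $X_\beta$; this is estimate \eqref{eq:coverStart}, giving per-ball mass $\lesssim\delta_\eps^3/(1+|x|^\beta)$ and hence the weighted discrepancy $\lesssim\delta_\eps/\tilde\delta_\eps\cdot(1+|x|^\beta)^{-1}$ in one stroke. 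You would need to restructure your counting step along these lines (cover at scale $\delta_\eps$, trace back, use the $X_\beta$-bound on the fine-scale initial average), rather than appending a support/decay argument to the uniform $L^\infty$-estimate.
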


\begin{proof}
	By Proposition \ref{pro:LInftyEstimateSeries} and Lemma \ref{lem:brutalEstimatesSums}, we have
	\[
		\| u_\eps - v_\eps\|_{L^\infty(\IR^3)} \leq C_\ast Y_\eps(t)^6(\phi_\eps + R_\eps).
	\]
	Combining this with Lemma \ref{lem:StokesApprox} yields
	\begin{equation}
		\label{eq:vW}
		\| w_{\delta_\eps} - v_\eps\|_{L^\infty(\IR^3)} 
		\leq C_\ast Y_\eps(t)^6(\phi_\eps + R_\eps + \delta_\eps)
		\leq \tilde{C} \delta_\eps
	\end{equation}

 We denote by $\psi_\eps$ and $\tilde{\psi}_\eps$ the flow of $v_\eps$ and 
$w_{\delta_\eps}$, respectively.
More precisely, $\psi_\eps: [0,T_0) \times [0,T_0) \times \IR^3 \to \IR^3 $ is the solution to
	\begin{equation}
	\begin{aligned}
		\partial_s \varphi(t,s,x) &= v(s,\varphi(t,s,x)), \\
		\varphi(t,t,x) &= x,
	\end{aligned}
	\end{equation}
	and analogously for $\tilde{\psi}_\eps$.
Let $x \in B_i(0)$. Then, $\psi_\eps(0,t,x) \in B_i(t)$ since $B_i$ is transported by $v_\eps$. Therefore, using \eqref{eq:vW} and Lemma \ref{lem:StokesApprox}, we deduce
\begin{align}
|\psi_\eps(0,t,x) -\tilde{\psi}_\eps(0,t,x)| 
		& \leq \int_0^t |v_\eps(s,\psi_\eps(0,s,x)) - w_{\delta_\eps}(s,\tilde{\psi}_\eps(0,s,y))| \dd s \\
		& \leq  \int_0^t |v_\eps(s,\psi_\eps(0,s,x)) - w_{\delta_\eps}(s,{\psi}_\eps(0,s,y))| \dd s \\
		 {} & + 	\int_0^t |w_{\delta_\eps}(s,\psi_\eps(0,s,x)) - w_{\delta_\eps}(s,\tilde{\psi}_\eps(0,s,y))| \dd s \\
		&\leq \tilde{C} \delta_\eps t + 
		C_\ast Y_\eps(t)^3 \int_0^t 	|\psi_\eps(0,s,x)) - \tilde{\psi}_\eps(0,s,x))| \dd s.		
\end{align}
Gronwall's inequality implies
\begin{equation}
	\label{eq:DifferencePsiPsiTilde}
	|\psi_\eps(0,t,x) -\tilde{\psi}_\eps(0,t,x)| \leq \tilde{C} \delta_\eps t e^{C_\ast Y_\eps(t)^3 t}
	\leq \tilde{C} \delta_\eps =: \gamma_\eps.
\end{equation}.

Consider a particle $i$. Then, \eqref{eq:DifferencePsiPsiTilde} implies that its mass transported by $w_{\delta_\eps}$
instead of $v_\eps$ lies in $B_{\tilde{C} \gamma_\eps}(X_i(t))$ at time $t$.
For $x \in \IR^3$, we define
\[
	q^x = \{ y \in Q_{\tilde{\delta}_\eps}^x \colon 
	\dist \{y, \partial Q_{\tilde{\delta}_\eps}^x\} > \tilde{C} \gamma_\eps \},
\]
and 
\[
	\bar{q}^x = \{ y \in \IR^3 \colon 
	\dist \{y, Q_{\tilde{\delta}_\eps}^x\} < \tilde{C} \gamma_\eps \}
\]
Then, \eqref{eq:DifferencePsiPsiTilde} yields
\[
	\frac{1}{|Q_{\tilde{\delta}_\eps}^x|} \int_{q^x} \rho_\eps(t,y) \dd y \leq 
	\fint_{Q_{\tilde{\delta}_\eps}^x} \tau_\eps(t,y) \dd y = \tau^{\tilde{\delta}_\eps}_\eps(t,x) 
	\leq \frac{1}{|Q_{\tilde{\delta}_\eps}^x|} \int_{\bar{q}^x} \rho_\eps(t,y) \dd y .
\]
Since we also have 
\[
	\frac{1}{|Q_{\tilde{\delta}_\eps}^x|} \int_{q^x} \rho_\eps(t,y) \dd y \leq 
	\fint_{Q_{\tilde{\delta}_\eps}^x} \rho_\eps(t,y) \dd y = \rho^{\tilde{\delta}_\eps}_\eps(t,x) 
	\leq \frac{1}{|Q_{\tilde{\delta}_\eps}^x|} \int_{\bar{q}^x} \rho_\eps(t,y) \dd y,
\]
it suffices to prove smallness of 
\begin{equation}
	\label{eq:sandwich}
	(1+|x|^\beta ) \left (\frac{1}{|Q_{\tilde{\delta}_\eps}^x|} \int_{q^x} \rho_\eps(t,y) \dd y
	- \frac{1}{|Q_{\tilde{\delta}_\eps}^x|} \int_{\bar{q}^x} \rho_\eps(t,y) \dd y \right)
	=  \frac{1+|x|^\beta }{|Q_{\tilde{\delta}_\eps}^x|} \int_{q^x \backslash \bar{q}^x} \rho_\eps(t,y) \dd y.
\end{equation}
Fix a particle $i$ such that $X_i(t) \in q^x \backslash \bar{q}^x$ and consider $B_{\delta_\eps}(X_i(t))$.
Then, by definition of $Y_\eps(T_0)$, we know that $X_j(t) \in B_{\delta_\eps}(X_i(t))$
implies $X_j(0) \in B_{Y(T_0)\delta_\eps}(X_i(0))$.
Thus,
\[
	\int_{B_{\delta_\eps}(X_i(t))} \rho_\eps (t,y) \dd y 
	\leq \int_{B_{Y(T_0) \delta_\eps}(X_i(0))} \rho_{\eps,0} (y)
\]
Let $I$ denote the set of centers $z$ of cubes $Q^z_{\delta_\eps}$ from Definition \ref{def:CubeAverages} 
with $Q^z_{\delta_\eps} \cap B_{Y_\eps(T_0) \delta_\eps}(X_i(0)) \neq \emptyset$.
Then,
\begin{align}
	\int_{B_{\delta_\eps}(X_i(t))} \rho_\eps (t,y) \dd y 
	&\leq \sum_{z \in I} \int_{Q^z_{\delta_\eps}}  \rho_{\eps,0} (y) \dd y \\
	&\leq \sum_{z \in I} \delta_\eps^3 \frac{1}{1+|z|^\beta} \|\rho_{\eps,0}\|_{{X_\beta}}.
\end{align}
Using the bound on $Y(T_0)$, we have $|I| \leq \tilde{C}$.
Furthermore, from Assumption \ref{ass:ConvergenceInitialData}, we know that $\rho_{\eps,0}$
is uniformly bounded in ${X_\beta}$.
Moreover, we observe that by \eqref{eq:vW} and Lemma \ref{lem:StokesApprox} $v_\eps$ is uniformly bounded in $L^\infty(\IR^3)$. 
Thus, since  $ X_i(t) \in q^x \backslash \bar{q}^x$ at time $t \leq T_0$, we have
$X_i (0) \in B_{\tilde{C}}(x)$ and also $z \in B_{\tilde{C}}(x)$ for all $z \in I$.
Therefore, 
\begin{equation}
	\label{eq:coverStart}
	\int_{B_{\delta_\eps}(X_i(t))} \rho_\eps (t,y) \dd y 
	\leq \tilde{C} \delta_\eps^3 \frac{1}{1+|x|^\beta}.
\end{equation}

Finally, we note that the number  $M$ of balls $B_{\delta_\eps}(X_i(t))$ with $X_i(t) \in q^x \backslash \bar{q}^x$ that are needed to cover all the particles $B_i(t)$ in 
$q^x \backslash \bar{q}^x$ is bounded by 
\[
	M \leq C\frac{|q^x \backslash \bar{q}^x|}{\delta_\eps^3} 
	\leq C \frac{\gamma_\eps \tilde{\delta}_\eps^2}{\delta_\eps^3} 
	\leq C \frac{\delta_\eps \tilde{\delta}_\eps^2}{\delta_\eps^3} 
	\leq C \frac{ \tilde{\delta}_\eps^2}{\delta_\eps^2}.
\]
Combining this with \eqref{eq:coverStart} yields
\[
	(1+|x|^\beta ) \frac{1}{|Q_{\tilde{\delta}_\eps}^x|} \int_{q^x \backslash \bar{q}^x} \rho_\eps(t,y) \dd y \leq M \tilde{C} \frac{\delta_\eps^3}{|Q_{\tilde{\delta}_\eps}^x|} 
	\leq \tilde{C} \frac{\delta_\eps}{ \tilde{\delta_\eps}} \to 0.
\]
This finishes the proof.
\end{proof}

\begin{lemma}
\label{lem:sigmaTau}
Under the assumptions of Theorem \ref{th:convergenceMacro}
	\[
		\| \tau_\eps^{\tilde{\delta}_\eps} - \sigma_\eps \|_{L^\infty([0,T];X_\beta)} \to 0.
	\]
\end{lemma}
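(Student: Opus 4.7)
The approach is based on the method of characteristics for the common transport velocity $w_{\delta_\eps}$. Since $w_{\delta_\eps} = S(\rho_\eps^{\delta_\eps}e)+\tfrac{2}{9}\xi_\eps^2 e$ is divergence-free, the backward flow $\Psi_t := \tilde{\psi}_\eps(t,0,\cdot)$ is volume-preserving, and one has $\tau_\eps(t,y)=\rho_{\eps,0}(\Psi_t(y))$ and $\sigma_\eps(t,x)=\rho_{\eps,0}^{\tilde{\delta}_\eps}(\Psi_t(x))$. Changing variables $z=\Psi_t(y)$,
\[
	\tau_\eps^{\tilde{\delta}_\eps}(t,x)=\fint_{\Psi_t(Q_{\tilde{\delta}_\eps}^x)}\rho_{\eps,0}(z)\dd z,\qquad \sigma_\eps^{\tilde{\delta}_\eps}(t,x)=\fint_{\Psi_t(Q_{\tilde{\delta}_\eps}^x)}\rho_{\eps,0}^{\tilde{\delta}_\eps}(z)\dd z.
\]
I split $\tau_\eps^{\tilde{\delta}_\eps}-\sigma_\eps=(\tau_\eps^{\tilde{\delta}_\eps}-\sigma_\eps^{\tilde{\delta}_\eps})+(\sigma_\eps^{\tilde{\delta}_\eps}-\sigma_\eps)$ and handle the two pieces separately.

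The second difference is the benign one. The Lipschitz bound $\|w_{\delta_\eps}\|_{W^{1,\infty}}\leq C_\ast Y_\eps^3$ from Lemma~\ref{lem:StokesApprox} and the uniform bound on $Y_\eps(t)$ for $t\leq T_0$ imply $\Psi_t$ is Lipschitz uniformly in $\eps$. Combined with $\nabla\rho_0\in X_\beta$ and the fact that $\rho_{\eps,0}^{\tilde{\delta}_\eps}\to\rho_0$ in $X_\beta$ (an upgrade of Remark~\ref{rem:orderOfDelta} to the weighted norm, obtained by averaging the pointwise estimate $|\rho_{\eps,0}^{\delta_\eps}-\rho_0|\leq \|\rho_{\eps,0}^{\delta_\eps}-\rho_0\|_{X_\beta}/(1+|\cdot|^\beta)$ over the $n_\eps^3$ small cubes inside each big cube and using continuity of $\rho_0$), an add-and-subtract argument in $\sigma_\eps^{\tilde{\delta}_\eps}-\sigma_\eps=\fint_{Q_{\tilde{\delta}_\eps}^x}[\rho_{\eps,0}^{\tilde{\delta}_\eps}(\Psi_t(y))-\rho_{\eps,0}^{\tilde{\delta}_\eps}(\Psi_t(x))]\dd y$ gives $\|\sigma_\eps^{\tilde{\delta}_\eps}-\sigma_\eps\|_{X_\beta}\leq \tilde{C}(\tilde{\delta}_\eps+\|\rho_{\eps,0}^{\tilde{\delta}_\eps}-\rho_0\|_{X_\beta})\to 0$.

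For the first difference, rewritten as $\tilde{\delta}_\eps^{-3}\int_S(\rho_{\eps,0}-\rho_{\eps,0}^{\tilde{\delta}_\eps})\dd z$ with $S:=\Psi_t(Q_{\tilde{\delta}_\eps}^x)$, the structural observation is that on each \emph{fine} $\delta_\eps$-cube $Q_{\delta_\eps}^z$ the integrand has mean $\delta_\eps^3(\rho_{\eps,0}^{\delta_\eps}(z)-\rho_{\eps,0}^{\tilde{\delta}_\eps}(z))$, so setting $\eta_\eps:=\|\rho_{\eps,0}^{\delta_\eps}-\rho_{\eps,0}^{\tilde{\delta}_\eps}\|_{X_\beta}$ one has $\eta_\eps\to 0$ by the previous remark. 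Since $\Psi_t$ is bi-Lipschitz uniformly, $|S|=\tilde{\delta}_\eps^3$ and $\mathcal{H}^2(\partial S)\leq C\tilde{\delta}_\eps^2$. Decomposing $S$ along the $\delta_\eps$-partition, at most $n_\eps^3=\tilde{\delta}_\eps^3/\delta_\eps^3$ cubes lie fully inside $S$, and their total contribution is $\leq n_\eps^3\cdot \delta_\eps^3\eta_\eps/(1+|x|^\beta)$; at most $Cn_\eps^2$ cubes intersect $\partial S$ (they cover a layer of volume $\leq C\tilde{\delta}_\eps^2\delta_\eps$), and each contributes $\leq C\delta_\eps^3/(1+|x|^\beta)$ by the uniform $X_\beta$-bounds on $\rho_{\eps,0}^{\delta_\eps}$ and $\rho_{\eps,0}^{\tilde{\delta}_\eps}$. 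Dividing by $|S|=n_\eps^3\delta_\eps^3$ yields the bound $(\eta_\eps+C/n_\eps)/(1+|x|^\beta)\to 0$, uniform in $x$ and in $t\in[0,T_0)$.

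The main obstacle, and the reason this decomposition is not routine, lies in the boundary-layer bookkeeping. At the natural coarse scale $\tilde{\delta}_\eps$ (where the integrand has \emph{exact} mean zero and the interior-cube contribution is therefore trivially zero) the $O(1)$ boundary $\tilde{\delta}_\eps$-cubes each contribute $O(\tilde{\delta}_\eps^3)$, producing an $O(1)$ error after division by $|S|$ that does not vanish. The integer-multiple relation $\tilde{\delta}_\eps=n_\eps\delta_\eps$ with $n_\eps\to\infty$ is exploited precisely to pass to the finer $\delta_\eps$-scale, where the surface-to-volume ratio shrinks to $1/n_\eps$ and the small mean $\eta_\eps$ of the integrand on $\delta_\eps$-cubes becomes available.
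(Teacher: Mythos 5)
Your proof is correct, and it takes a genuinely different route from the paper's, even though both exploit the same structural features (divergence-free flow giving volume preservation, the integer-multiple relation $\tilde{\delta}_\eps = n_\eps\delta_\eps$ with $n_\eps \to \infty$, and the uniform flow Lipschitz bounds from the assumed bound on $Y_\eps(T_0)$). The paper's decomposition inserts the reference value $\rho_0(\psi_\eps(t,0,x))$ and splits into three terms $A_1 + A_2 + A_3$: $A_1$ compares $\fint_{Q^{\psi_\eps(t,0,x)}_{\tilde{\delta}_\eps}}\rho_{\eps,0}$ to $\rho_0(\psi_\eps(t,0,x))$ using Assumption \ref{ass:ConvergenceInitialData}; $A_2$ compares an average of $\rho_{\eps,0}$ over an explicitly constructed interior approximating set $\bar{q}_\eps(t,x)$ (a union of fine $\delta_\eps$-cubes inside the deformed big cube) to the same reference; and $A_3$ is a boundary layer controlled by the same particle-counting argument as in Lemma \ref{lem:rhoTau}. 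Your decomposition instead inserts $\sigma_\eps^{\tilde{\delta}_\eps}$, yielding a two-term split in which the hard term is an integral of $\rho_{\eps,0}-\rho_{\eps,0}^{\tilde{\delta}_\eps}$ over the single deformed set $S = \Psi_t(Q^x_{\tilde{\delta}_\eps})$, and the cancellation comes from the exact identity that this integrand has mean $\delta_\eps^3(\rho^{\delta_\eps}_{\eps,0}-\rho^{\tilde{\delta}_\eps}_{\eps,0})$ on each fine cube — the Lipschitz continuity of $\rho_0$ enters only indirectly, in the upgrade of Remark \ref{rem:orderOfDelta} to $X_\beta$ (a step you correctly carry out, and which the paper also uses implicitly). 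Your bookkeeping is arguably cleaner because both averages live over literally the same set $S$, whereas the paper has to construct and track $\bar{q}_\eps(t,x) \subset \psi_\eps(t,0,Q^x_{\tilde{\delta}_\eps})$; the paper's approach, in turn, recycles the particle-counting machinery from Lemma \ref{lem:rhoTau} verbatim. Both get the same $O(\eta_\eps + 1/n_\eps)$ decay and both require $n_\eps \to \infty$ (the boundary-layer term is $O(1)$ at the coarse scale, as you correctly flag).
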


\begin{proof}
The difference between $\tau_\eps^{\tilde{\delta}_\eps} $ and $\sigma_\eps$ 
is only the order of making the time evolution and taking averages.
More precisely, $\sigma_\eps$ is the function we get from transporting the averaged initial datum,
whereas $\tau_\eps^{\tilde{\delta}_\eps}$ is the average of the transported initial datum.

Denoting by $\psi_\eps$ the flow of $w_{\delta_\eps}$, we get
\[
	\sigma_\eps (t,x) = \fint_{Q_{\tilde{\delta}_\eps}^{\psi_\eps(t,0,x)}} \rho_{\eps,0}(y) \dd y.
\]
On the other hand, we have
\[
	\tau_\eps^{\tilde{\delta}_\eps}(t,x) = \fint_{Q_{\tilde{\delta}_\eps}^x} \rho_{\eps,0}(\psi(t,0,y)) \dd y 
					 = \fint_{\psi_\eps\left(t,0,{Q_{\tilde{\delta}_\eps}^x}\right)} \rho_{\eps,0}(y) \dd y,
\]
where we used that
$\det( D \psi_\eps) = 1$ which follows from the fact that $w_{\delta_\eps}$ is divergence free. 

We estimate using Lemma \ref{lem:StokesApprox}
\begin{align}
|\psi_\eps(0,t,x) -\psi_\eps(0,t,y)| 
		& \leq |x-y| + \int_0^t |w_{\delta_\eps}(s,\psi_\eps(0,s,x)) - w_{\delta_\eps}(s,\psi_\eps(0,s,y))| \dd s \\
		& \leq  |x-y| + C_\ast Y_\eps(t)^3 \int_0^t |\psi_\eps(0,s,x) - \psi_\eps(0,s,x)| \dd s.
\end{align}
Gronwall's inequality implies
\begin{equation}
	\label{eq:LipschitzPsi}
	|\psi_\eps(0,t,x) -\psi_\eps(0,t,y)| \leq | x-y| e^{ C_\ast Y_\eps(T_0)^3 T_0} \leq \tilde{C}|x-y|.
\end{equation}
By an analogous argument, we also get the lower bound
\begin{equation}
	\label{eq:LipschitzPsiLower}
	|\psi_\eps(0,t,x) -\psi_\eps(0,t,y)| \geq | x-y| e^{ -C_\ast Y_\eps(T_0)^3 T_0} 
	\geq \frac{1}{\tilde{C}}|x-y|.
\end{equation}

Consider a point $ y \in Q_{\tilde{\delta}_\eps}^{x}$ at time $t$. We want to find $\gamma_\eps$ such that
$\dist\{y,\partial Q_{\tilde{\delta}_\eps}^{x}\} > \gamma_\eps$ implies
\begin{equation}
	\label{eq:smallCubeToLargeCube}
	\psi_\eps(0,t,Q_{\delta_\eps}^{\psi_\eps(t,0,y)}) \subset Q_{\tilde{\delta}_\eps}^{x}.
\end{equation}
Estimate \eqref{eq:LipschitzPsi} implies that this is true with 
\begin{equation}
	\label{eq:gamma}
	\gamma_\eps =  \tilde{C} \delta_\eps \ll \tilde{\delta}_\eps,
\end{equation}
for all $t \leq T_0$.
Let 
\[
	q_\eps(x) = \{  y \in Q_{\tilde{\delta}_\eps}^{x} \colon \dist\{y,\partial Q_{\tilde{\delta}_\eps}^{x}\} > \gamma_\eps \},
\]
and
\[
	\bar{q}_\eps(t,x) = \bigcup_{y \in q_\eps(t,x)} Q_{\delta_\eps}^{\psi_\eps(t,0,y)}.
\]
Then, by \eqref{eq:smallCubeToLargeCube},
 \begin{equation}
 	\label{eq:qSubsets}
 	 q_\eps(x) \subset \psi_\eps(0,t,\bar{q}_\eps(t,x)) \subset Q_{\tilde{\delta}_\eps}^{x}.
 \end{equation}
Therefore,
\begin{align}
	(1+|x|^\beta) |\sigma_\eps (t,x) - \tau_\eps^{\tilde{\delta}_\eps}(t,x)| 
	&= (1+|x|^\beta) \left| \fint_{Q_{\tilde{\delta}_\eps}^{\psi_\eps(t,0,x)}} \rho_{\eps,0}(y) \dd y -
		\fint_{Q_{\tilde{\delta}_\eps}^x} \rho_{\eps,0}(\psi_\eps(t,0,y)) \dd y \right| \\
	 &\leq 
	 (1+|x|^\beta) \left| \fint_{Q_{\tilde{\delta}_\eps}^{\psi_\eps(t,0,x)}} \rho_{\eps,0}(y) \dd y 
	 - \rho_0(\psi_\eps(t,0,x)) \right| \\
	{} & + (1+|x|^\beta) \left| \frac{1}{|Q_{\tilde{\delta}_\eps}^x|}  \int_{\bar{q}_\eps(t,x)} \rho_{\eps,0}(y) \dd y
	- \rho_0(\psi_\eps(t,0,x)) \right| \\
	 {} & +  (1+|x|^\beta) \frac{1}{|Q_{\tilde{\delta}_\eps}^x|} 
	 \int_{Q_{\tilde{\delta}_\eps}^x \backslash q_\eps(t,x)} \rho_{\eps,0}(\psi_\eps(t,0,y) \dd y \\
	 &=: A_1 + A_2 + A_3.
\end{align}

By Lemma \ref{lem:StokesApprox} and the assumption on $Y(T_0)$, $w_{\delta_\eps}$ is uniformly bounded in $L^\infty(\IR^3)$. Thus, $|\psi_\eps(t,0,y)| \geq |y| - \tilde{C}$ and
\[
	\frac{1}{1+|y|^\beta} \leq \frac{\tilde{C}}{1+|x|^\beta}, 
	\qquad \text{for all} \quad y \in Q_{\tilde{\delta}_\eps}^{\psi_\eps(t,0,x)}.
\]

We estimate $A_1$ using the convergence $ \rho_{0,\eps}^{\delta_\eps} \to \rho_0$ and
boundedness of  $\|\nabla \rho_0\|_{X_\beta}$.
\[
	A_1 \leq \tilde{C} \| \rho^{\delta_\eps}_{\eps,0} - \rho_0 \|_{X_\beta}
	+ \tilde{\delta}_\eps \| \nabla \rho_0 \|_{X_\beta} \to 0.
\]

In order to estimate $A_3$, we proceed as in the estimate of the term 
in \eqref{eq:sandwich} from Lemma \ref{lem:rhoTau}. We have to control the number of deformed particles transported by $w_{\delta_\eps}$ in  
$Q_{\tilde{\delta}_\eps}^x \backslash q_\eps(t,x)$ at time $t$. 
To this end, we define the trajectories of the particles transported by $w_{\delta_\eps}$
\[
	\tilde{X}_i(t) := \psi_\eps(0,t,X_i(0))
\]
and
\[
	\tilde{B}_i(t) := \psi_\eps(0,t,B_i(0)).
\]
Then,  estimate \eqref{eq:LipschitzPsiLower}
implies for all $i \neq j$
\[
	|\tilde{X}_i(t) - \tilde{X}_j(t)| \geq \frac{\tilde{C}} {|{X}_i(0) - {X}_j(0)|}.
\]
and
\[
	\op{diam} \tilde{B}_i(t) \leq \tilde{C} R_\eps
\]
Therefore, $A_3$ tends to zero by the same argument as we have proved smallness of \eqref{eq:sandwich}.

For $A_2$, let $(x_i)_{i=1}^n$ denote the centers of the disjoint cubes that $\bar{q}_\eps(t,x)$
consists of. Note that \eqref{eq:qSubsets} implies $|q_\eps(x)| \leq |\bar{q}_\eps(t,x)|$
due to conservation of volume.
Using also \eqref{eq:LipschitzPsi}, we deduce
\begin{align}
	A_2 &\leq (1+|x|^\beta) \frac{|Q_{{\delta}_\eps}^{x}|}{|Q_{\tilde{\delta}_\eps}^x|} \sum_{i=1}^n
	 \left| \fint_{Q_{{\delta}_\eps}^{x_i}} \rho_{\eps,0}(y) \dd y - \rho_0(\psi_\eps(t,0,x)) \right| 
	 + \left(1 - \frac{|\bar{q}_\eps(t,x)|}{|Q_{\tilde{\delta}_\eps}^x|}\right) \rho_0(\psi_\eps(t,0,x)) \\
	 & \leq \tilde{C} \| \rho^{\delta_\eps}_{\eps,0} - \rho_0 \|_{{X_\beta}} 
	+ \tilde{C} \tilde{\delta}_\eps \| \nabla \rho_0 \|_{{X_\beta}} + 
	\tilde{C} \frac{\left|Q_{\tilde{\delta}_\eps}^x \backslash q_\eps(x)\right|}{|Q_{\tilde{\delta}_\eps}^x|}
	 \|\rho_0\|_{{X_\beta}} \\
	& \leq \tilde{C} \| \rho^{\delta_\eps}_{\eps,0} - \rho_0 \|_{{X_\beta}} 
	+ \tilde{C} \tilde{\delta}_\eps \| \nabla \rho_0 \|_{{X_\beta}} + 
	\tilde{C} \frac{\gamma_\eps}{\tilde{\delta}_\eps} \|\rho_0\|_{L^\infty(\IR^3)}.
\end{align}
By equation \eqref{eq:gamma}, this tends to $0$
as $\eps \to 0$.
\end{proof}

\begin{lemma}
\label{lem:SContinuousXToLipschitz}
	For all $\beta > 2$ and all $h \in {X_\beta}$, 
	\[
		\| S h \|_{W^{1,\infty}(\IR^3)} \leq C \|h \|_{X_\beta}.
	\]
\end{lemma}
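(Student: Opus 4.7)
The plan is to write $Sh = \Phi \ast h$, where $\Phi$ is the Oseen tensor from \eqref{eq:Oseen}, and exploit the pointwise bounds $|\Phi(x)| \leq C|x|^{-1}$ and $|\nabla \Phi(x)| \leq C|x|^{-2}$. It then suffices to establish the uniform estimate
\[
\sup_{x \in \IR^3} \int_{\IR^3} \frac{1}{|x-y|^k(1+|y|^\beta)} \dd y \leq C, \qquad k \in \{1,2\},
\]
for every $\beta > 2$, since this immediately yields $\| Sh \|_{L^\infty(\IR^3)} + \|\nabla Sh\|_{L^\infty(\IR^3)} \leq C\|h\|_{X_\beta}$ after pulling $\|h\|_{X_\beta}$ out of the convolution.

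I would prove this integral bound by splitting $\IR^3$ into four regions based on the location of $y$ relative to $x$, letting $R := |x|$ and handling $R$ large (the case $R \leq 2$ is immediate by local integrability of $|x-y|^{-k}$ in $\IR^3$ combined with $\|h\|_{L^\infty} \leq C\|h\|_{X_\beta}$). The four regions are the near zone $A = \{|x-y|\leq 1\}$, the inner zone $B = \{|x-y|>1,\, |y| \leq R/2\}$, the intermediate zone $C = \{|y| > R/2,\, |y| \leq 2R\}$ (excluding $A$), and the far zone $D = \{|y| > 2R\}$.

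On $A$ the weight satisfies $1+|y|^\beta \geq c(1+R^\beta)$, so the contribution is $\leq C(1+R^\beta)^{-1}$. On $B$ one has $|x-y| \geq R/2$, and a direct integration of the weight over $\{|y| \leq R/2\}$ gives the bound $CR^{-1}\max\{1, R^{3-\beta}\} \leq CR^{\max(-1, 2-\beta)}$, which is uniformly bounded for $\beta > 2$. On $C$ the weight is again $\geq c(1+R^\beta)$, so the contribution is at most $\frac{C}{1+R^\beta} \int_{1 < |x-y| \leq 3R} |x-y|^{-k}\dd y \leq CR^{2-\beta}$ (when $k=1$) respectively bounded by $C(1+R^\beta)^{-1}\log R$ (when $k=2$), both of which are uniformly bounded for $\beta > 2$. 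Finally on $D$ one uses $|x-y| \geq |y|/2$, reducing to $\int_{2R}^\infty r^{1-\beta}\dd r \sim R^{2-\beta}$ for $k=1$ and an even better bound for $k=2$.

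The routine part is simply combining the four pieces; there is no real obstacle, since $\beta > 2$ appears cleanly in the critical estimate $R^{2-\beta} = O(1)$. Nothing nontrivial beyond careful case analysis is needed, and the same splitting handles both $k=1$ (for $Sh$) and $k=2$ (for $\nabla Sh$), with the $k=2$ case being strictly easier in the far regions and requiring only that $|x-y|^{-2}$ remain locally integrable in three dimensions near the singularity.
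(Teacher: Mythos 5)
Your proof follows the same route as the paper's: write $Sh = \Phi \ast h$ with the Oseen tensor, pull $\|h\|_{X_\beta}$ out of the convolution, and reduce to bounding $\int_{\IR^3} |x-y|^{-k}(1+|y|^\beta)^{-1}\dd y$ for $k=1,2$ by a region decomposition. The paper's decomposition is a bit more compact: it splits only into $B_{|x|/2}(x)$ and its complement, observing that on the complement $|x-y| \gtrsim |y|$, so a single application of $\int \frac{\dd y}{|y|^k(1+|y|^\beta)} < \infty$ (valid precisely for $\beta > 2$ when $k=1$) finishes the far part, while the near part gives $|x|^{2}/(1+|x|^\beta)$ exactly as you compute. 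Your four-region version makes the same trade-offs more explicit but is not a genuinely different argument.

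Two small inaccuracies, neither fatal. First, in region $C$ with $k=2$, $\int_1^{3R} r^{2-k}\,\dd r$ is of order $R$, not $\log R$; the resulting bound $R/(1+R^\beta)$ is still uniformly bounded for $\beta > 2$, so nothing breaks. Second, the dismissal of the case $R = |x| \leq 2$ as \emph{``immediate by local integrability of $|x-y|^{-k}$ combined with $\|h\|_{L^\infty} \leq C\|h\|_{X_\beta}$''} is not quite correct as stated: local integrability and $\|h\|_{L^\infty}$ control only the near part $|x-y|\leq 1$, while the far part $\int_{|x-y|>1}|x-y|^{-k}(1+|y|^\beta)^{-1}\dd y$ is infinite if one only uses $h \in L^\infty$, since $(1+|y|^\beta)^{-1}$ is not integrable for $2 < \beta \leq 3$. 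One still needs the observation from region $D$ (that $|x-y| \gtrsim |y|$ for $|y|$ large) to get a finite bound on the full range $\beta > 2$; this is exactly what the paper's complement-of-$B_{|x|/2}(x)$ estimate handles in one stroke. Both points are cosmetic and the conclusion stands.
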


\begin{proof}
We recall that the solution operator $S$ can be represented by the convolution with the 
Oseen tensor
\[
	\Phi(x) = \frac{1}{8\pi}\left( \frac{1}{|x|} + \frac{x \otimes x}{|x|^3} \right).
\]
Hence,  by definition of ${X_\beta}$,
\begin{align}
	|(S h)(x)| &\leq C \|h \|_{X_\beta}  \int_{\IR^3} \frac{1}{|x-y|} \frac{1}{|1 + |y|^\beta} \dd y \\
	&\leq C \|h \|_{X_\beta}\int_{B_{\frac{|x|}{2}}(x)} \frac{1}{|x-y|} \frac{1}{|1 + |x|^\beta}
	 + C \|h \|_{X_\beta}\int_{\IR^3 \backslash B_{\frac{|x|}{2}}(x)} \frac{1}{|y|} \frac{1}{|1 + |y|^\beta} \\
	 & \leq C \|h \|_{X_\beta}\frac{|x|^2}{1+|x|^\beta} + C \|h \|_{X_\beta},
\end{align}
since $\frac{1}{|y|} \frac{1}{|1 + |y|^\beta} \in L^1(\IR^3)$ as $1+ \beta > 3$.

The estimate for $\nabla (S h)(x)$ works analogously.
\end{proof}

\begin{proof}[Proof of Theorem \ref{th:convergenceMacro}]
	We define $\psi_\eps$ to be the flow of 
	$w_{\delta_\eps}=S(\rho_\eps^{\delta_\eps}e) + \frac{2}{9} \xi_\eps^2 e$. Moreover, we write
	$w = S(\rho e) + \frac{2}{9} \xi^2_\ast e $ and denote by $\tilde{\psi}$ the flow of $w$.
	
	We recall from Lemma \ref{lem:StokesApprox} that $w_{\delta_\eps}$
	is uniformly bounded  in $L^\infty((0,T_0) \times \IR^3)$.
	Moreover 
	\begin{equation}
		\label{eq:EstimateNablaV}
		\|w\|_{W^{1,\infty}((0,T_0)\times\IR^3)} \leq \tilde{C}
	\end{equation}
	 This
	follows from boundedness of $\rho$ in $L^\infty(0,T_0;{X_\beta})$, which is stated in Theorem \ref{th:existenceTransportStokes}, and Lemma \ref{lem:SContinuousXToLipschitz}.
	From the $L^\infty$-bounds on $w$ and $w_{\delta_\eps}$, we deduce for all $x \in \IR^3$
	\[
			\frac{1}{1+|{\psi_\eps(t,0,x)}|^\beta} \leq \frac{\tilde{C}}{1+|x|^\beta}, 
	\]
	and the same inequality with $\tilde{\psi}$ replacing $\psi$.

Let $\sigma_\eps$ be the function from Lemma \ref{lem:sigmaTau}.
Then,
	\begin{equation}
		\label{eq:Gronwall1}
	\begin{aligned}
		| \rho(t,x) - \sigma_\eps(t,x)| &= |\rho_0(\tilde{\psi}(t,0,x)) - \rho_{0,\eps}^{\tilde{\delta}_\eps}(\psi_\eps(t,0,x))| \\
		& \leq |\rho_0(\tilde{\psi}(t,0,x)) -\rho_0(\psi_\eps(t,0,x))| + | \rho_0(\psi_\eps(t,0,x)) - \rho_{0,\eps}^{\tilde{\delta}_\eps}(\psi_\eps(t,0,x))| \\
		&\leq \frac{1}{1+|x|^\beta} \left( \|\nabla \rho_0\|_{{X_\beta}} |\tilde{\psi}(t,0,x) -\psi_\eps(t,0,x)|  + \|\rho_0 - \rho_{0,\eps}^{\tilde{\delta}_\eps}\|_{{X_\beta}} \right).
	\end{aligned}
	\end{equation}

	Concerning the first term on the right hand side, we have
	\begin{equation}
	\label{eq:Gronwall2}
	\begin{aligned}
		|\tilde{\psi}(t,0,x) - \psi_\eps(t,0,x)| & \leq  \int_0^t |w(s,\tilde{\psi}(t,s,x)) - 
		w_{\delta_\eps}(s,\psi_\eps(t,s,x))| \dd s \\
		& \leq \int_0^t |w(s,\tilde{\psi}(t,s,x)) - w(s,\psi_\eps(t,s,x))| \dd s \\
		 {} &+ \int_0^t | w(s,\psi_\eps(t,s,x)) - w_{\delta_\eps}(s,\psi_\eps(t,s,x))| \dd s \\
		& \leq  \|\nabla w\|_{L^\infty} \int_0^t |\tilde{\psi}(t,s,x)) - \psi_\eps(t,s,x)| \dd s\\
		{} & + \int_0^t \| w(s,\cdot) -w_{\delta_\eps}(s,\cdot)\|_{L^\infty} \dd s.
	\end{aligned}
	\end{equation}
	Gronwall yields
	\begin{equation}
	\label{eq:Gronwall3}
	\begin{aligned}
		\|\tilde{\psi}(t,0,\cdot) - \psi_\eps(t,0,\cdot)\|_{L^\infty} 
		& \leq \int_0^t  \| w(s,\cdot) - w_{\delta_\eps}(s,\cdot)\|_{L^\infty} \dd s \\
		{} &+ \|\nabla w\|_{L^\infty} \int_0^t \int_0^s \| w(\tau,\cdot) - w_{\delta_\eps}(\tau,\cdot)\|_{L^\infty} \dd \tau e^{(t-s) \|\nabla w\|_{L^\infty}} \dd s \\
		& \leq  \left(t e^{t\|\nabla w\|_{L^\infty} } + 1\right) 
		\int_0^t  \| w(s,\cdot) - w_{\delta_\eps}(s,\cdot)\|_{L^\infty} \dd s.
	\end{aligned}
	\end{equation}
Combining estimates \eqref{eq:EstimateNablaV}, \eqref{eq:Gronwall1},  and \eqref{eq:Gronwall3}, we deduce
	for $t < T_0$
	\begin{equation} 
	\label{eq:GronwallStart}
	\begin{aligned}
		 \| \rho(t,\cdot) - \sigma_\eps(t,\cdot) \|_{{X_\beta}} & \leq  \|\rho_0 - \rho_{0,\eps}^{\tilde{\delta}_\eps}\|_{{X_\beta}} 
		 + \tilde{C} \|\nabla \rho_0\|_{{X_\beta}}
		 \int_0^t  \| w(s,\cdot) - w_{\delta_\eps}(s,\cdot)\|_{L^\infty} \dd s.
	\end{aligned}
	\end{equation}

	Lemma \ref{lem:SContinuousXToLipschitz} and Lemma \ref{lem:StokesApprox} yield
	\begin{equation}
	\begin{aligned}
		 \| w(s,\cdot) - w_{\delta_\eps}(s,\cdot)\|_{L^\infty} 
		 &\leq  \| w_{\tilde{\delta}_\eps,\eps}(s,\cdot)- w_{\delta_\eps}(s,\cdot)\|_{L^\infty} 
		  +  \| w(s,\cdot) -  w_{\tilde{\delta}_\eps,\eps}(s,\cdot)\|_{L^\infty}  \\
		 &\leq  C_\ast Y_\eps(t)^3  \tilde{\delta}_\eps  
		 +  \| S\left(\rho(s,\cdot) - \rho_\eps^{\tilde{\delta}_\eps}(s,\cdot)\right) \|_{L^\infty}\\
		 &\leq  \tilde{C}  \tilde{\delta}_\eps  
		 +  C_\ast \| \rho(s,\cdot) - \rho_\eps^{\tilde{\delta}_\eps}(s,\cdot)\|_{{X_\beta}} \\
		 & \leq \tilde{C}  \tilde{\delta}_\eps  
		 +  C_\ast \| \sigma_\eps(s,\cdot) - \rho_\eps^{\tilde{\delta}_\eps}(s,\cdot)\|_{{X_\beta}} 
		 		 + \tilde{C} \| \rho(s,\cdot) - \sigma_\eps(s,\cdot)\|_{{X_\beta}}\\
		 & =: \theta_1 +  \tilde{C} \| \rho(s,\cdot) - \sigma_\eps(s,\cdot)\|_{{X_\beta}}.
	\end{aligned}
		\label{eq:estimateDifferentVelocities}
	\end{equation}
	Note that $\theta_1 \to 0$ as $\eps \to 0$ by Lemma \ref{lem:rhoTau} and Lemma \ref{lem:sigmaTau}.
	Using estimate \eqref{eq:estimateDifferentVelocities} in
	 \eqref{eq:GronwallStart}, we deduce
	\begin{align}
		 \| \rho(t,\cdot) - \sigma_\eps(t,\cdot) \|_{{X_\beta}} & \leq  \|\rho_0 - \rho_{0,\eps}^{\tilde{\delta}_\eps}\|_{{X_\beta}} 
		  + \tilde{C} 
		\left(\theta_1 T_0 
		+ \int_0^t  \| \rho(s,\cdot) - \sigma_\eps(s,\cdot) \|_{{X_\beta}}  \dd s \right) .
	\end{align}
	We apply Gronwall once more to conclude
	\[
		 \| \rho(t,\cdot) - \sigma_\eps(t,\cdot) \|_{{X_\beta}} 
		 \leq (\tilde{C} \theta_1 + \|\rho_0 - \rho_{0,\eps}^{\tilde{\delta}_\eps}\|_{{X_\beta}} )
		  e^{t \tilde{C}},
	\]	
	which converges to zero, uniformly for $t \leq T_0$.
	Combining this estimate with Lemma \ref{lem:rhoTau} and Lemma \ref{lem:sigmaTau}
	finishes the proof.
\end{proof}

\subsection{Extension of the convergence to arbitrary times}
\label{sec:ConvergenceArbitraryTimes}

Using the convergence result, Theorem \ref{th:convergenceMacro}, we are able to prove a posteriori that the constant $C_1$ in the
assumption of Theorem \ref{th:convergenceMacro} does not blow up in finite time.
This will finally enable us to prove the main result, Theorem \ref{th:main}.

\begin{lemma}
	\label{lem:aPosterioriEstimateY}
	There exists a constant $C_2 $ which depends only on 
	$c_0$ from Assumtion \ref{cond:particlesSeparated} with the following property.
	Assume the assumptions of Theorem \ref{th:convergenceMacro} are satisfied for some time $T_0 >0$.
	Then, there exists  $\eps_1 > 0$ such that
		\begin{equation}
		Y_\eps(T_0)  \leq e^{ C_2 T_0}                                                                                                                                                                                                                                                                                                                                                      
		\qquad \text{for all} \quad \eps  < \eps_1, \quad \text{and} \quad t \leq T_0.
	\end{equation}
\end{lemma}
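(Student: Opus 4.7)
The plan is to use the convergence statement of Theorem \ref{th:convergenceMacro} as an a posteriori device: the a priori bound $Y_\eps(T_0) \leq C_1$ guarantees the convergence $\rho_\eps^{\tilde{\delta}_\eps} \to \rho$ in $L^\infty([0,T_0);X_\beta)$, and this convergence provides a uniform bound on the local density of particles that is independent of $C_1$. This density bound then feeds back through the Lipschitz estimate of Lemma \ref{lem:OszillationsOfVelocityApproximation} to upgrade the control on $Y_\eps$.

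Concretely, I would first fix $\eps$ small enough that $\|\rho_\eps^{\tilde{\delta}_\eps}(t,\cdot) - \rho(t,\cdot)\|_{X_\beta} \leq 1$ for all $t \in [0,T_0]$. Writing $K := \|\rho\|_{L^\infty([0,T_0];X_\beta)} + 1$ (finite, depending only on $\rho_0$ via \eqref{eq:transportStokes}), the $L^\infty$-bound $\|\rho_\eps^{\tilde{\delta}_\eps}\|_{L^\infty} \leq K$ implies that the number of particles in any ball of radius $r \geq \tilde{\delta}_\eps$ is bounded by $C K N_\eps r^3$. The central estimate to establish is then
\[
\alpha_\eps(t) := \sup_j \frac{1}{N_\eps} \sum_{i \neq j} \frac{1}{|X_i(t) - X_j(t)|^2} \leq C,
\]
with $C$ depending only on $K$ and $c_0$. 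I would prove this via a layer-cake representation $\alpha_\eps \leq \sup_j \int_0^\infty 2 N_j(r)/(N_\eps r^3)\, dr$, splitting the integral at scale $\tilde{\delta}_\eps$: on $r \geq \tilde{\delta}_\eps$ the density bound controls $N_j(r) \leq CKN_\eps r^3$ directly (combined with the trivial total mass bound $N_j(r) \leq N_\eps$ for large $r$); on $r < \tilde{\delta}_\eps$ the separation $d_{\eps,\min}(t) \geq d_{\eps,\min}(0)/Y_\eps(t) \geq c_0^{1/3} N_\eps^{-1/3}/C_1$ combined with $\tilde{\delta}_\eps \to 0$ renders this short-range contribution $o(1)$ as $\eps \to 0$. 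A parallel decomposition shows that $\sup_j \frac{\phi_\eps}{N_\eps}\sum_{i \neq j} 1/|X_i - X_j|^3$ is bounded by $C K \phi_\eps \log N_\eps$ plus a vanishing remainder, hence is eventually smaller than the threshold $\delta$ from Lemma \ref{lem:OszillationsOfVelocityApproximation}, thanks to \ref{cond:phiLogN}.

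With these two uniform estimates, Lemma \ref{lem:OszillationsOfVelocityApproximation} yields
\[
|v_\eps(X_i(t)) - v_\eps(X_j(t))| \leq C_2 |X_i(t) - X_j(t)| \qquad \text{for all } t \leq T_0,
\]
with $C_2$ depending only on $K$ and $c_0$. Since the particles are transported by $v_\eps$, this gives $\frac{d}{dt} |X_i(t) - X_j(t)| \geq -C_2 |X_i(t) - X_j(t)|$, and Gronwall yields $|X_i(t) - X_j(t)| \geq |X_i^0 - X_j^0| e^{-C_2 t}$, i.e. $Y_\eps(t) \leq e^{C_2 t}$ for every $t \leq T_0$.

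The main obstacle is the short-range estimate for $\alpha_\eps$: the averaged density $\rho_\eps^{\tilde{\delta}_\eps}$ carries no information below the scale $\tilde{\delta}_\eps$, so one must combine the a priori bound $Y_\eps \leq C_1$ with the fact that the coefficient of the short-range contribution vanishes with $\tilde{\delta}_\eps$. The resulting bound is nevertheless independent of $C_1$, and this is precisely what allows the bootstrap from the qualitative a priori bound $C_1$ to the explicit quantitative bound $e^{C_2 T_0}$.
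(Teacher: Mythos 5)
Your proof follows the same overall strategy as the paper's (use Theorem \ref{th:convergenceMacro} as an a posteriori device to get a uniform density bound, translate this into a uniform bound on $\alpha_\eps := \sup_j N_\eps^{-1}\sum_{i\neq j}|X_i-X_j|^{-2}$ via a near-field/far-field split, then invoke Lemma \ref{lem:OszillationsOfVelocityApproximation} and Gronwall), but the endgame is handled differently and, I think, more cleanly. The paper's near-field estimate retains an explicit $Y_\eps^3$ factor, so its Gronwall relation reads $Y_\eps(t)\leq \exp(C_\ast t + C_\ast\tilde{\delta}_\eps\int_0^t Y_\eps^3)$, a nonlinear inequality that the paper resolves with a quantitative cutoff argument: define $T_\eps^\ast$ as the first time $Y_\eps$ hits the level $e^{C_2 T_0}$, derive a lower bound for $T_\eps^\ast$ that exceeds $T_0$ once $\tilde{\delta}_\eps\leq e^{-6C_\ast T_0}$, and conclude by contradiction. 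You instead absorb the near-field term at the start, using the a priori bound $Y_\eps\leq C_1$ together with $\tilde{\delta}_\eps\to 0$ to make the short-range contribution $o(1)$ for $\eps<\eps_1(C_1,T_0)$, giving $\alpha_\eps\leq C(K,c_0)$ uniformly; this makes the Gronwall step linear and the extraction of $C_2$ immediate. Both routes are valid; yours trades explicit bookkeeping of $Y_\eps^3$ in the bound for an $\eps_1$ that depends on $C_1$ (which is fine, as neither the paper nor the statement controls $\eps_1$'s dependence).

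One small point to tighten: you define $K:=\|\rho\|_{L^\infty([0,T_0];X_\beta)}+1$, which depends on the $X_\beta$-norm of $\rho_0$ (and on $T_0$) and therefore risks making $C_2$ depend on more than $c_0$. For the particle-count bound you only need $\|\rho_\eps^{\tilde{\delta}_\eps}(t,\cdot)\|_{L^\infty}$, so it suffices to set $K:=\|\rho_0\|_{L^\infty(\IR^3)}+1$, using that $\|\rho(t,\cdot)\|_{L^\infty}=\|\rho_0\|_{L^\infty}$ is conserved by the divergence-free transport, and that by \eqref{eq:boundAveragedDensity} at $t=0$ (where $Y_\eps(0)=1$) one has $\|\rho_{\eps,0}^{\delta_\eps}\|_{L^\infty}\leq C_\ast$ and hence $\|\rho_0\|_{L^\infty}\leq C_\ast$, with $C_\ast$ depending only on $c_0$. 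With this modification your $C_2$ depends only on $c_0$, as required.
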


\begin{proof}
	The main issue is to control the term
	\[
		\sup_j \frac{1}{N_\eps} \sum_{i \neq j} \frac 1{|X_i - X_j|^k}
	\] 
	for $k = 2,3$, where the particle positions depend on $\eps$ and time.
	
	\emph{Claim.} There exists $\eps_1>0$ such that for all $\eps < \eps_1$,
	\[
		\sup_j\frac{1}{N_\eps}  \sum_{i \neq j} \frac {1}{|X_i - X_j|^2} 
		 \leq C_\ast(1+ \tilde{\delta}_\eps Y_\eps^3),
	\] 
	for some constant $C_\ast$, which only depends on $c_0$.
	
	Let $I$ be the set of the centers of the
	 cubes with side length $\tilde{\delta}_\eps$ from Definition \ref{def:CubeAverages}.
	At a fixed time $t < T_0$, we fix a particle $X_j$ and define $I_1$ to consist of the center of the
	 cube containing $X_j$,
	and the centers of the cubes that are adjacent to that cube. Furthermore, we denote $I_2 = I \backslash I_1$.
	Then, we estimate
	\begin{align*}
		\frac{1}{N_\eps} \sum_{i \neq j} \frac 1{|X_i - X_j|^2} &
		\leq \frac{1}{N_\eps}  \sum_{y \in I_1} 
		\sum_{X_i \in Q_{\tilde{\delta}_\eps}^y} \frac {1}{|X_i - X_j|^2} 
		+ \frac{1}{N_\eps}  \sum_{y \in I_2} 
		\sum_{X_i \in Q_{\tilde{\delta}_\eps}^y} \frac {1}{|X_i - X_j|^2} \\
		& =: A_1 + A_2.
	\end{align*}
	The first term, $A_1$, we estimate brutally,
	\[
		A_1 \leq \frac{C}{(d_{\eps, \min}(t))^3 N_\eps} 
		\int_{B_{C \tilde{\delta}_\eps}(X_j)} \frac{1}{|y-X_j|^2} \dd y 
		\leq C_\ast \tilde{\delta}_\eps^2 Y_\eps(t)^3 .
	\] 
	In order to estimate the second term, $A_2$, we define
	\[
		M(x) := |\{ X_i \in Q_{\tilde{\delta}_\eps}^x \}|.
	\]
	Note that
	\[
		\| M \|_{L^1(\IR^3)} = \tilde{\delta}_\eps^3 N_\eps.
	\]
	Moreover, 
	\[
		\rho_\eps^{\tilde{\delta}_\eps} = \frac{M}{N_\eps \tilde{\delta}_\eps^3}. 
	\]
	Thus, Theorem \ref{th:convergenceMacro} implies that we can choose $\eps_0$ small enough such that
	for $\eps < \eps_0$
	\[
		\| M \|_{L^\infty(\IR^3)} \leq 2 \tilde{\delta}_\eps^3 N_\eps \| \rho(t) \|_{L^\infty(\IR^3)} 
								= 2 \tilde{\delta}_\eps^3 N_\eps \| \rho(0) \|_{L^\infty(\IR^3)} 
									\leq \tilde{\delta}_\eps^3 N_\eps C_\ast,
	\]
	where we used that the $L^\infty$-norm of $\rho$ is conserved in time.
	Combining the $L^\infty$- and $L^1$-estimates of $M$ yields
	\[
		A_2 \leq \frac{C}{N_\eps}  \sum_{y \in I_2}   \frac {M(y)}{|y - X_j|^2} 
		 \leq \frac{C}{N_\eps \tilde{\delta}_\eps^3}  \int_{\IR^3} \frac {M(y)}{|y - X_j|^2} \dd y \leq C_\ast.
	\]
	Combining the estimates for $A_1$ and $A_2$ proves the claim.	 
	 
	Recall from Lemma \ref{lem:brutalEstimatesSums}
	\begin{align}
	\sup_j \frac{\phi_\eps}{N_\eps} \sum_{i \neq j} \frac {1}{|X_i - X_j|^3} 
		\leq C_\ast \phi_\eps Y_\eps^3 (\log (N_\eps) + \log(Y_eps)).
	\end{align}
	and this converges to zero for any fixed time $t < T_0$ due to 
	Assumption \ref{cond:phiLogN} since $Y_\eps(t)$ is bounded by assumption.
	
	Thus, Lemma \ref{lem:OszillationsOfVelocityApproximation} yields for all particles $i$ and $j$
	\[
		|v_\eps(t,X_i) - v_\eps(t,X_j)| \leq C_\ast(1 +  \tilde{\delta}_\eps Y_\eps(t)^3) |X_i - X_j| 	
	\] 
	 for all $ t \leq T_0 $ and all  $ \eps < \eps_0$ for some $\eps_0$ small enough.
	
	We estimate
	\[
		|X_i(t) - X_j(t)| \geq |X_i(0) - X_j(0)|  -  \int_0^t |v_\eps(s,X_i(s)) - v_\eps(s,X_j(s))| \dd s.
	\]
	Thus, by Gronwall
	\[
		|X_i(t) - X_j(t)| \geq |X_i(0) - X_j(0)| \exp \left( 
				-\int_0^t \frac{|v_\eps(s,X_i(s)) - v_\eps(s,X_j(s))|}{|X_i(s) - X_j(s)|} \dd s \right).
	\]
	Therefore, for all $\eps < \eps_0$,
	\begin{align}
		Y_\eps(t) &\leq  \exp \left( \int_0^t C_\ast(1 +  \tilde{\delta}_\eps Y_\eps(s)^3) \dd s \right) \\
		&\leq \exp \left( C_\ast t + C_\ast \tilde{\delta}_\eps \int_0^t Y_\eps(s)^3)  \dd s \right).
	\end{align}
	This means that $T_\eps$ might blow up in finite time for a fixed $\eps$ but 
	we can derive a lower bound for the blow up time that
	tends to infinity as $\delta_\eps \to 0$. To see this, we define for any $Y_0 > 0$
	\[
		T_\eps^\ast = \sup\{t \leq T_0 \colon Y_\eps(t) \leq Y_0\}.
	\]
	Then, if $T_\eps^\ast < T_0$, we know by continuity that 
	\[
		Y_\eps(T_\eps^\ast) = Y_0 \leq \exp ( C_\ast T_\eps^\ast(1 +   \tilde{\delta}_\eps Y_0^3 ))
	\]
	Hence, 
	\[
		T_\eps^\ast \geq \frac{\log Y_0}{C_\ast(1 +   \tilde{\delta}_\eps Y_0^3 )}.
	\]
	Therefore, the assertion follows by choosing 
	\[
		C_2 = 2 C_\ast,
	\] 
	and $\eps_1 < \eps_0$ such that
	\[
		 \tilde{\delta}_\eps \leq e^{- 6 C_\ast T_0}. \qedhere
	\]
\end{proof}

\begin{proof}[Proof of Theorem \ref{th:main}]
	Let $T_0 > 0$. By Theorem \ref{th:convergenceMacro}, it suffices to prove that
	there exists $\eps_1 > 0$ and $C_1 < \infty$ such that 
	\begin{equation}
		\label{eq:uniformParticlesSeparatedAtT}
		Y_\eps(t) \leq C_1, 
		\qquad \text{for all} \quad \eps  < \eps_1, \quad \text{and} \quad t \leq T_0.
	\end{equation}

	We argue by contradiction. Define $T_0$ to be the infimum over all times for which
	there is no pair $(\eps_1,C_1)$ such that \eqref{eq:uniformParticlesSeparatedAtT} holds,
	and assume $T_0 < \infty$. By Proposition \ref{pro:particlesRemainSeparated}, we know $T_0 > 0$.
	
	Let $0 < \theta < T_0$. Then, at time $T_\ast := T_0 - \theta$, 
	application of Lemma \ref{lem:aPosterioriEstimateY} yields
		\begin{equation}
		Y_\eps(t) \leq e^{ C_2 T_0}, 
		\qquad \text{for all} \quad \eps  < \eps_0, \quad \text{and} \quad t \leq T_\ast,
	\end{equation}
	for some $\eps_0 > 0$.
	Now, we can apply again Proposition \ref{pro:particlesRemainSeparated}, which yields 
		\begin{equation}
		Y_\eps(t) \leq 2 e^{ C_2 T_0}, 
		\qquad \text{for all} \quad \eps  < \eps_1, \quad \text{and} \quad t \leq T_\ast + \theta_1,
	\end{equation}
	for $\eps _1 > 0$ and some $\theta_1$ which depends only on $e^{ C_2 T_0}$.
	Thus, choosing $\theta < \theta_1$, we get a contradiction to the definition of $T_0$. 
\end{proof}

\section{Well-posedness of the macroscopic equations}
\label{sec:transportStokes}

In this section, we prove well-posedness of the macroscopic equation \eqref{eq:transportStokes}, which we write here as
\begin{equation}
\label{eq:TransportStokes}
\begin{aligned}
	\partial_t \rho + (u + v_0) \cdot \nabla \rho &= 0, \\
	\rho (0,\cdot) &= \rho_0, \\
	-\Delta u + \nabla p &= \rho e, \\
	\dv u &= 0,
\end{aligned}
\end{equation}
where $v_0 \in \IR^3$ and $e \in \IR^3$ are some given constants.

We are interested in classical solutions of this problem. More precisely,
for a given initial datum $\rho_0 \in X_\beta$ with $\nabla \rho_0 \in X_\beta$, we look for a
classical solution $ (\rho,u) \in W^{1,\infty}(0,T;X_\beta) \times L^\infty(0,T,W^{1,\infty})$ with 
$\nabla \rho \in L^{\infty}(0,T;X_\beta)$ for any positive time $T$. 
Here, $X_\beta$ is the space from Definition \ref{def:X}.

\begin{proposition}
	\label{pro:effectConstant}
	Let $u_0 \in \IR^3$ and assume
	 $ (\rho,u) \in L^{\infty}(0,T;X_\beta) \times L^\infty(0,T,W^{1,\infty})$ is a solution
	to problem \eqref{eq:TransportStokes} with $v_0 = u_0$. Let
	\[
		\sigma(t,x) := \rho(t,x - t u_0)
	\]
	and
	\[
		v(t,x) := u(t,x - t u_0).
	\]
	Then $(\sigma,v)$ solves \eqref{eq:TransportStokes} with $v_0 = 0$.
\end{proposition}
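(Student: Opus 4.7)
The content of the proposition is a Galilean symmetry of \eqref{eq:TransportStokes}: a constant drift $v_0 = u_0$ in the transport velocity can be removed by passing to a uniformly translating reference frame. My plan is to verify this by direct substitution, checking the two parts of \eqref{eq:TransportStokes} for $(\sigma, v)$ separately.

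The Stokes subsystem is handled by translation-invariance of the solution operator. Since $S$ is a convolution with the Oseen tensor, setting $\tilde p(t,x) := p(t, x - tu_0)$ gives, at each fixed $t$,
\[
-\Delta v(t,x) + \nabla \tilde p(t,x) = (-\Delta u + \nabla p)(t, x - tu_0) = \rho(t, x - tu_0)\, e = \sigma(t,x)\, e,
\]
together with $\dv v(t,x) = (\dv u)(t, x - tu_0) = 0$ and decay at infinity. The initial condition passes through unchanged, $\sigma(0,\cdot) = \rho_0$, so the Stokes/initial-data part of the reduced system is immediate.

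For the transport equation I would apply the chain rule. Writing $y := x - tu_0$, one has
\[
\partial_t \sigma(t,x) = (\partial_t \rho)(t, y) - u_0 \cdot (\nabla \rho)(t, y), \qquad v(t,x) \cdot \nabla_x \sigma(t,x) = u(t, y) \cdot (\nabla \rho)(t, y),
\]
so that $\partial_t \sigma + v \cdot \nabla \sigma = \bigl[\partial_t \rho + (u - u_0)\cdot\nabla\rho\bigr](t, y)$. Combined with the equation $\partial_t \rho + (u + u_0) \cdot \nabla \rho = 0$ satisfied by $\rho$ at the point $(t,y)$, this closes the calculation, provided the Galilean shift is oriented consistently with the direction of the drift so that the two $u_0 \cdot \nabla \rho$ contributions cancel rather than reinforce each other. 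There is no real analytic obstacle: the only structural inputs are translation-invariance of $S$ and the chain rule, and the only care point is the bookkeeping of signs in the shift.
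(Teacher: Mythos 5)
Your chain-rule computation is correct, and it in fact exposes a sign error in the proposition as printed. With $\sigma(t,x) := \rho(t, x - t u_0)$, the calculation you carry out gives
\[
\partial_t \sigma + v \cdot \nabla \sigma = \bigl[\partial_t \rho + (u - u_0)\cdot\nabla\rho\bigr](t, x - t u_0),
\]
and substituting the equation $\partial_t \rho = -(u + u_0)\cdot\nabla\rho$ satisfied by $\rho$ leaves the residual $-2\,u_0\cdot\nabla\rho(t, x - t u_0)$, which is not zero in general. The shift must go the other way: $\sigma(t,x) := \rho(t, x + t u_0)$, $v(t,x) := u(t, x + t u_0)$ (pass to the frame moving with drift $u_0$, so the new coordinate is $x' = x - t u_0$ and $\sigma(t,x') = \rho(t, x' + t u_0)$). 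You describe this as a matter of orienting the shift consistently, but it should be stated flatly: with the shift exactly as written in the proposition, the transport equation does not close, so the statement as printed is false and the direction must be reversed.

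For comparison, the paper's own proof is a single sentence that addresses only the Stokes subsystem, via translation invariance of the convolution operator $S$, and never touches the transport equation. Your proof handles both parts and is therefore the one that detects the problem. The reason the paper's argument is blind to the sign is structural: the Stokes equation has no explicit time dependence, so a time-dependent spatial translation in \emph{either} direction sends $(\rho,u,p)$ to a triple satisfying $-\Delta v + \nabla\tilde p = \sigma e$ and $\dv v = 0$. Only the transport equation, where $\partial_t$ generates a $\mp u_0\cdot\nabla\rho$ term from the chain rule depending on the orientation, determines the correct sign of the shift.
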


\begin{proof}
	The solution operator of the Stokes equations $S$ is a convolution 
	operator, and convolution commutes with translation. Therefore,
	$(S (\rho(t,\cdot) e )(x-t u_0) = (S(\sigma(t,\cdot) e)(x)$.
\end{proof}

\begin{theorem}
	\label{th:existenceTransportStokes}
	Assume  $\rho_0 \in X_\beta$ with $\nabla \rho_0 \in X_\beta$ for some $\beta > 2$. Then, Problem \eqref{eq:TransportStokes} 
	admits a unique solution $ \rho \in W^{1,\infty}(0,T;X_\beta)$ for all $T > 0$.
	Moreover, $ \nabla \rho \in L^\infty(0,T;X_\beta)$.
\end{theorem}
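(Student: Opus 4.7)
The plan is to reduce first to the case $v_0 = 0$ by invoking Proposition \ref{pro:effectConstant}, since any solution for general $v_0$ can be obtained from one for $v_0 = 0$ by a moving-frame change of variables. Once this is done, I would set up a Picard iteration: starting from $\rho^{(0)}(t,x) := \rho_0(x)$, define $u^{(n)} := S(\rho^{(n)} e)$ and then let $\rho^{(n+1)}$ be the unique classical solution of the linear transport equation $\partial_t \rho^{(n+1)} + u^{(n)} \cdot \nabla \rho^{(n+1)} = 0$ with initial datum $\rho_0$. Since $u^{(n)}$ is divergence-free, $\rho^{(n+1)}$ is transported along the flow $\Psi^{(n)}$ of $u^{(n)}$, so $\rho^{(n+1)}(t,x) = \rho_0(\Psi^{(n)}(0,t,x))$.

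The first key a priori input is Lemma \ref{lem:SContinuousXToLipschitz}, which gives $\|u^{(n)}(t)\|_{W^{1,\infty}} \leq C \|\rho^{(n)}(t)\|_{X_\beta}$. Combined with conservation of the pointwise values along characteristics (so $\|\rho^{(n)}(t)\|_{L^\infty} = \|\rho_0\|_{L^\infty}$), this yields a Gronwall-type inequality controlling $\|\rho^{(n+1)}(t)\|_{X_\beta}$ by $\|\rho_0\|_{X_\beta}$ on a short time interval whose length depends only on $\|\rho_0\|_{X_\beta}$. The essential estimate on the weight is
\[
	(1+|x|^\beta) |\rho^{(n+1)}(t,x)| \leq \frac{1+|x|^\beta}{1+|\Psi^{(n)}(0,t,x)|^\beta} (1+|\Psi^{(n)}(0,t,x)|^\beta) |\rho_0(\Psi^{(n)}(0,t,x))|,
\]
where the prefactor is bounded by a constant depending only on $t \|u^{(n)}\|_{L^\infty}$, using that $|\Psi^{(n)}(0,t,x) - x| \leq t \|u^{(n)}\|_{L^\infty}$.

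Next I would prove that the map $\rho \mapsto T(\rho)$ sending $\rho^{(n)}$ to $\rho^{(n+1)}$ is a contraction in $C([0,\tau]; X_\beta)$ for $\tau$ small enough. The essential ingredient is a stability estimate of the form $\|u^{(n)} - u^{(n-1)}\|_{L^\infty} \leq C \|\rho^{(n)} - \rho^{(n-1)}\|_{X_\beta}$ from Lemma \ref{lem:SContinuousXToLipschitz}, which combined with a Gronwall argument on the difference of characteristics $\Psi^{(n)} - \Psi^{(n-1)}$ (as done in \eqref{eq:Gronwall3}) yields the contraction. Banach's fixed point theorem then gives a unique local solution in this class. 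Global existence on $[0,T]$ follows by iteration, because $\|\rho(t)\|_{L^\infty}$ is conserved and the same argument as in the a priori bound shows that $\|\rho(t)\|_{X_\beta}$ stays bounded on each finite interval (one gets a differential inequality for $\|\rho(t)\|_{X_\beta}$ that does not blow up in finite time).

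The regularity $\nabla \rho \in L^\infty(0,T; X_\beta)$ is obtained by differentiating the transport equation: $\nabla \rho$ satisfies $\partial_t \nabla \rho + u \cdot \nabla(\nabla \rho) = -(\nabla u)^T \nabla \rho$, and solving along characteristics together with the bound $\|\nabla u\|_{L^\infty} \leq C \|\rho\|_{X_\beta}$ gives, by Gronwall, $\|\nabla \rho(t)\|_{X_\beta} \leq e^{C t} \|\nabla \rho_0\|_{X_\beta}$ (up to the same weight-ratio factor as before). Uniqueness follows from a standard Gronwall estimate on the difference of two hypothetical solutions, exactly as in \eqref{eq:Gronwall2}--\eqref{eq:Gronwall3}. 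The main obstacle I expect is handling the weight $1+|x|^\beta$: the Stokes velocity $u$ does not decay in time and decays only like $1/|x|$ in space, so the flow displaces points by a finite distance at each time, and one must carefully track how this displacement interacts with the weighted norm. The requirement $\beta > 2$ entering Lemma \ref{lem:SContinuousXToLipschitz} is exactly what keeps $u \in W^{1,\infty}$ despite the slow spatial decay, and is what makes this whole scheme work.
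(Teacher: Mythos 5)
Your overall strategy is the same as the paper's: reduce to $v_0=0$ via Proposition~\ref{pro:effectConstant}, run a fixed-point (Picard) iteration on the linear transport equation $\partial_t\rho + S(\tau e)\cdot\nabla\rho = 0$, use Lemma~\ref{lem:SContinuousXToLipschitz} to control the Stokes velocity, control the flow via Gronwall, and get the gradient bound by differentiating the equation. The local existence and uniqueness parts are sound.

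The gap is in the global-in-time step. You write that conservation of $\|\rho(t)\|_{L^\infty}$ together with ``the same argument as in the a priori bound'' gives a differential inequality for $\|\rho(t)\|_{X_\beta}$ that does not blow up. But if you control $\|u(t)\|_{W^{1,\infty}}$ only via Lemma~\ref{lem:SContinuousXToLipschitz}, i.e.\ by $C\|\rho(t)\|_{X_\beta}$, then the finite-propagation estimate $|\varphi(t,0,x)-x|\leq\int_0^t\|u(s)\|_{L^\infty}\,ds$ feeds back into the weight $(1+|x|^\beta)$ and produces a closed inequality of the schematic form $f(t)\lesssim \bigl(1+ \bigl(\int_0^t f\bigr)^{\beta}\bigr)\|\rho_0\|_{X_\beta}$ with $f(t)=\|\rho(t)\|_{X_\beta}$. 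Since $\beta>2$, that inequality is superlinear and does admit finite-time blow-up; it only closes on a short interval, which is exactly why it is used for the local contraction. Conservation of $\|\rho\|_{L^\infty}$ alone does not repair this, because the $X_\beta$-norm controls more than the $L^\infty$-norm.

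The paper's resolution, which your proposal is missing, is to bound the Stokes velocity \emph{without} the weighted norm: since the transport velocity is divergence-free, both $\|\rho(t)\|_{L^1}$ and $\|\rho(t)\|_{L^\infty}$ are conserved in time, and the Oseen convolution gives directly
\[
	\|S(\rho(t)e)\|_{W^{1,\infty}} \leq C\bigl(\|\rho(t)\|_{L^1}+\|\rho(t)\|_{L^\infty}\bigr) = C\bigl(\|\rho_0\|_{L^1}+\|\rho_0\|_{L^\infty}\bigr) \leq C\|\rho_0\|_{X_\beta},
\]
uniformly in $t$. With this uniform Lipschitz bound on the velocity, the displacement of characteristics grows only linearly in $t$, so $\|\rho(t)\|_{X_\beta}$ grows at most polynomially and cannot blow up. You should add this observation (in particular the $L^1$-conservation, which you did not mention) to make the global step complete.
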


\begin{proof}

	By Proposition \ref{pro:effectConstant}, we only have to consider the case $v_0 = 0$.

	We prove the statement using the Banach fixed point theorem. 
	We can write problem \eqref{eq:TransportStokes} in a more compressed way as
	\begin{equation}
		\label{eq:TransportStokesCompressed}
	\begin{aligned}
	\partial_t \rho + S(\rho e) \cdot \nabla \rho &= 0, \\
	\rho (0,\cdot) &= \rho_0.
	\end{aligned}
	\end{equation}
	The strategy of the proof is the following. In the first part, we derive estimates for the linear equation 
	\begin{equation}
	\label{eq:TransportStokesLinear}
	\begin{aligned}
	\partial_t \rho + S(\tau e) \cdot \nabla \rho &= 0, \\
	\rho (0,\cdot) &= \rho_0,
	\end{aligned}
	\end{equation}
	In the second part, we show that the solution operator for this equation is a contraction on a suitable metric space
	for small times. 	
	In order to get a global in time solution, we finally derive estimates for this solution that show that no blow-up
	in finite time is possible.
	
	\step{Step 1. Estimates for the linear equation.}
We recall from Lemma \ref{lem:SContinuousXToLipschitz}
	\begin{align}
		\| S(\tau e) \|_{W^{1,\infty}} &\leq C \| \tau \|_{X_\beta}, \label{supEstiamteStokes1},
	\end{align}
	where $C$ depends only on $e$.

	We claim that the solution operator $A$ for Problem \eqref{eq:TransportStokesLinear}
	 maps $\tau \in L^\infty(0,T;X_\beta)$ to a function $\rho \in  L^\infty(0,T;X_\beta)$.
	To this end, we denote $v := S(\tau e)$. 
	Then, the solution to the transport equation
	 \eqref{eq:TransportStokesLinear} is given by
	\[
		\rho(t,x) = \rho_0 (\varphi(t,0,x)),
	\]
	where $\phi(t,\cdot, \cdot)$ is the flow of $v$ starting at time $t$. More precisely, $\varphi$ is the solution to
	\begin{equation}
		\label{eq:flow}
	\begin{aligned}
		\partial_s \varphi(t,s,x) &= v(s,\varphi(t,s,x)) \\
		\varphi(t,t,x) &= x.
	\end{aligned}
	\end{equation}
We observe that,
\begin{equation}
	\label{eq:finitePropagation}
	|\varphi(t,0,x) - x| \leq \int_0^t |v(s,\varphi(t,0,x)|\dd t 
	\leq C T \| \tau \|_{L^\infty(0,T;X_\beta)}.
\end{equation}
Thus,
\begin{equation}
	\label{eq:1+x^beta}
	(1+|x|^\beta) \leq  C_1\left(1+ T^\beta\| \tau \|_{L^\infty(0,T;X_\beta)}^\beta\right) {(1+|\varphi(t,0,x)|^\beta)}
\end{equation}
where we denote the generic constant by $C_1$ for future reference.
In particular,
\begin{equation}
	\label{eq:growthOfRho}
	 \|\rho\|_{L^\infty(0,T;X_\beta)}   \leq C_1(1+ T^\beta\| \tau \|_{L^\infty(0,T;X_\beta)}^\beta)  \|\rho_0\|_{X_\beta}.
\end{equation}

	\step{Step 2. Contraction for small times.}
	We want to prove that $A$ is a contraction in
	\begin{equation}
			Y := \overline{B_{2 C_1 \| \rho_0 \|_{X_\beta}}(0)} \subset {L^\infty(0,T;X_\beta)}
	\label{eq:definitionY}
	\end{equation}
	 for sufficiently small times $T$, where $C_1$ is the constant from \eqref{eq:1+x^beta}.
	 Choosing $T \leq (2 C_1 \| \rho_0 \|_{X_\beta})^{-1}$, we have seen in \eqref{eq:growthOfRho} that 
	 the solution operator $A$ for Problem \eqref{eq:TransportStokesLinear} maps $Y$ to itself.

	Let $\tau_1, \tau_2 \in Y$, and for $i= 1,2$, define $v_i = S(\tau_i e)$ the solutions to the Stokes equations, 
	$\varphi_i$ the corresponding flows as in \eqref{eq:flow}, and $\rho_i = A \tau_i$ the solutions to the linear transport equation \eqref{eq:TransportStokesLinear}.
	Then, for $t \leq T \leq (2 C_1 \| \rho_0 \|_{X_\beta})^{-1}$ we estimate using \eqref{eq:1+x^beta}
	and writing $L := 2 C_1 \|\rho_0\|_{X_\beta} \|\nabla \rho_0\|_{X_\beta}$
	\begin{align}
		(1+|x|^\beta) |\rho_1(t,x) - \rho_2(t,x)| 
		&= (1+|x|^\beta)|\rho_0(\varphi_1(t,0,x)) - \rho_0(\varphi_2(t,0,x))| \\
		&\leq L  |\varphi_1(t,0,x) -\varphi_2(t,0,x)| \\
		& \leq L	 \int_0^t |v_1(s,\varphi_1(t,s,x)) - v_2(s,\varphi_2(t,s,x))| \dd s \\
		& \leq L 	 \int_0^t |v_1(s,\varphi_1(t,s,x)) - v_1(s,\varphi_2(t,s,x))| \dd s \\
			& {} + L	 \int_0^t | v_1(s,\varphi_2(t,s,x)) - v_2(s,\varphi_2(t,s,x))| \dd s \\
		& \leq L  \|\nabla v_1\|_{L^\infty((0,t)\times \IR^3)} 
		\int_0^t |\varphi_1(t,s,x)) - \varphi_2(t,s,x)| \dd s \\
		{} & + L \| v_1 - v_2\|_{L^\infty((0,t)\times \IR^3)} t.
	\end{align}
	Using again Gronwall, we deduce
	\[
		\|\rho_1(t,x) - \rho_2(t,x)\|{L^\infty(0,T;X_\beta)} \leq  L T  \| v_1 - v_2\|_{L^\infty((0,T)\times \IR^3)} e^{ L  \|\nabla v_1\|_{L^\infty((0,T)\times \IR^3)} T}.
	\]
	Hence, using $\| \tau_1 \|_{L^\infty(0,T;X_\beta)} \leq 2 C_1 \|\rho_0\|_{X_\beta}$ from 
	 \eqref{eq:definitionY} together with the estimates for the Stokes equation \eqref{supEstiamteStokes1}, we conclude for all $t \leq T \leq (2 C_1 \| \rho_0 \|_{X_\beta})^{-1} $
	\[
		\|\rho_1- \rho_2\|_{L^\infty(0,T;X_\beta)} 
		\leq C L T \| \tau_1 - \tau_2 \|_{L^\infty(0,T;X_\beta)}  e^{ 2 C C_1 L \|\rho_0\|_{X_\beta} T}.
	\]
	This proves that $A$ is indeed a contraction if we choose $T$ sufficiently small.
	Therefore, the Banach fixed point theorem provides a unique solution $\rho$ up to this time $T$.	
	
	\step{Step 3. Global solution.}
	In order to get a global solution in time, we need to show that $\rho (t,\cdot)$ and
	 $\nabla \rho (t,\cdot)$ do not blow up in finite time, if $\rho$ is the solution 
	 to \eqref{eq:TransportStokes}.
	Define $v = S(\rho e)$ and $\varphi$ the flow of $v$ as before.
	We observe that 
	\[
		\|\rho(t,\cdot)\|_{L^1(\IR^3)} \leq C \|\rho(t,\cdot)\|_{X_\beta}
	\]
	and
	\[
		\|\rho(t,\cdot)\|_{L^\infty(\IR^3)} \leq  \|\rho(t,\cdot)\|_{X_\beta}
	\]	
	Clearly, the spatial $L^\infty$-norm of $\rho$ is conserved over time.
	Since $v$ is divergence free, also the spatial $L^1$-norm is conserved.
	Using the explicit convolution formula for the solution operator $S$ yields
	\begin{equation}
		\|v(t,\cdot)\|_{W^{1,\infty}(\IR^3)} 
		\leq C(\|\rho(t,\cdot)\|_{L^1(\IR^3)} + \|\rho(t,\cdot)\|_{L^\infty(\IR^3)})
		\leq C \|\rho_0\|_{X_\beta}.
	\label{eq:vLipschitzConstant}	
	\end{equation}
Therefore, we estimate analogously as we have obtained \eqref{eq:1+x^beta}
	\begin{align}
		(1+|x|^\beta)|\rho(t,x)| = (1+|x|^\beta)|\rho_0(\varphi(t,0,x))| \leq 
		C\left(1 + t^\beta \|\rho_0\|^\beta_{X_\beta}\right)\|\rho_0\|_{X_\beta},
	\end{align}
	and we conclude
	\[
		\|\rho\|_{L^\infty(0,T;X_\beta)} \leq C\left(1 + T^\beta \|\rho_0\|^\beta_{X_\beta}\right)\|\rho_0\|_{X_\beta}.
	\]

	In order to get estimates for the gradient of $\rho$, we differentiate equation \eqref{eq:TransportStokesCompressed} and obtain
	\[
		\partial_t \partial_{x_i} \rho = v \nabla \cdot \partial_{x_i} \rho + \partial_{x_i} v \cdot \rho. 
	\]
	Hence,
	\[
		\partial_{x_i} \rho(t,x) = \partial_{x_i} \rho_0(\varphi(t,0,x)) 
		+ \int_0^t \partial_{x_i} v(s,\varphi(t,s,x)) \cdot \rho(s,\varphi(t,s,x)) \dd s.
	\]
	Using \eqref{eq:vLipschitzConstant} leads to
	\begin{align}
		\|\nabla \rho \|_{L^\infty(0,T;X_\beta)} 
		\leq C\left(1 + T^\beta \|\rho_0\|^\beta_{X_\beta}\right)
		\left(\|\nabla \rho_0\|_{X_\beta} + T \|\rho_0\|_{X_\beta}  \|\rho\|_{L^\infty(0,T;X_\beta)} \right)
	\end{align}
	
	Therefore, both $\rho$ and $\nabla \rho$  do not blow up in finite time.
	Thus, by a standard contradiction argument using Step 2, solutions
	to \eqref{eq:TransportStokes} exist and are unique for arbitrary times $T$.
\end{proof}

\section*{Acknowledgement}

The author acknowledges support through the CRC 1060, the mathematics of emergent effects, of the University of Bonn,
that is funded through the German Science Foundation (DFG).

\printbibliography

\end{document}